\numberwithin{equation}{section}
\numberwithin{figure}{section}
\theoremstyle{plain}
\newtheorem{thm}{\protect\theoremname}[section]
  \theoremstyle{definition}
  \newtheorem{defn}[thm]{\protect\definitionname}
  \theoremstyle{definition}
  \newtheorem{example}[thm]{\protect\examplename}
  \theoremstyle{plain}
  \newtheorem{lem}[thm]{\protect\lemmaname}
  \theoremstyle{remark}
  \newtheorem{rem}[thm]{\protect\remarkname}
  \theoremstyle{plain}
  \newtheorem{prop}[thm]{\protect\propositionname}
  \theoremstyle{plain}
  \newtheorem{cor}[thm]{\protect\corollaryname}
\newcommand{\be }{\begin{equation}}
\newcommand{\ee }{\end{equation}}
\newcommand {\emptycomment}[1]{} 
\newcommand{\br}[1]{   [ \cdot,    \cdot  ]   }
  \providecommand{\corollaryname}{Corollary}
  \providecommand{\definitionname}{Definition}
  \providecommand{\examplename}{Example}
  \providecommand{\lemmaname}{Lemma}
  \providecommand{\propositionname}{Proposition}
  \providecommand{\remarkname}{Remark}
\providecommand{\theoremname}{Theorem}
\begin{document}

\title{$H$-standard cohomology for Courant-Dorfman algebras and Leibniz
algebras}

\author{Xiongwei Cai}

\address{Mathematics Research Unit, FSTC, University of Luxembourg, Luxembourg}

\email{shernvey@gmail.com}

\keywords{Courant-Dorfman algebras, Leibniz algebras, $H$-standard cohomology,
crossed product}
\begin{abstract}
We introduce the notion of $H$-standard cohomology for Courant-Dorfman
algebras and Leibniz algebras, by generalizing Roytenberg's construction.
Then we generalize a theorem of Ginot-Grutzmann on transitive Courant
algebroids, which was conjectured by Stienon-Xu. The relation between
$H$-standard complexes of a Leibniz algebra and the associated crossed
product is also discussed. 
\end{abstract}
\maketitle

\section{Introduction}

The notion of Leibniz algebras, objects that date back to the work
of ``D-algebras'' by Bloh \cite{Bloh65}, is due to Loday \cite{Loday93}.
In literature, Leibniz algebras are sometimes also called Loday algebras.

A (left) Leibniz algebra $L$ is a vector space over a field $\mathfrak{k}$
($\mathfrak{k}=\mathbb{R}$ or $\mathbb{C}$) equipped with a bracket
$\circ:L\otimes L\rightarrow L$, called the Leibniz bracket, satisfying
the (left) Leibniz identity:
\[
x\circ(y\circ z)=(x\circ y)\circ z+y\circ(x\circ z),\qquad\forall x,y,z\in L.
\]

A concrete example is the omni Lie algebra $ol(V)\triangleq gl(V)\oplus V$,
where $V$ is a vector space. It is first introduced by Weinstein
\cite{Weinstein} as the linearization of the standard Courant algebroid
$TV^{*}\oplus T^{*}V^{*}$. The Leibniz bracket of $ol(V)$ is given
by:
\[
(A+v)\circ(B+w)=[A,B]+Aw,\quad\forall A,B\in gl(V),\ v,w\in V.
\]

In \cite{LodayPirash93}, Loday and Pirashvili introduced the notions
of representations (corepresentations) and Leibniz homology (cohomology)
for Leibniz algebras. They also studied universal enveloping algebras
and PBW theorem for Leibniz algebras.

Leibniz algebras can be viewed as a non-commutative analogue of Lie
algebras. Some theorems and properties of Lie algebras are still valid
for Leibniz algebras, while some are not. The properties of Leibniz
algebras are under continuous investigation by many authors, we can
only mention a few works here \cite{Barnes,Covez,Patsourakos,ShengLiu13,ShengLiu16}.

Leibniz algebras have attracted more interest since the discovery
of Courant algebroids, which can be viewed as the geometric realization
of Leibniz algebras in certain sense. Courant algebroids are important
objects in recent studies of Poisson geometry, symplectic geometry
and generalized complex geometry.

The notion of Courant algebroids was first introduced by Liu, Weistein
and Xu in \cite{LiuWX}, as an answer to an earlier question ``what
kind of object is the double of a Lie bialgebroid''. In their original
definition, a Courant algebroid is defined in terms of a skew-symmetric
bracket, now known as ``Courant bracket''. In \cite{RoytPhD}, Roytenberg
proved that a Courant algebroid can be equivalently defined in terms
of a Leibniz bracket, now known as ``Dorfman bracket''. And he defined
standard complex and standard cohomology of Courant algebroids in
the language of supermanifolds in \cite{RoytNQmfd}. In \cite{StienonXu08},
Stienon and Xu defined naive cohomology of Courant algebroids , and
conjectured that there is an isomorphism between standard and naive
cohomology for a transitive Courant algebroid. Later this conjecture
was proved by Ginot and Grutzmann in \cite{GinotGrutz08}. 

In \cite{RoytCDA}, Roytenberg introduced the notion of Courant-Dorfman
algebras, as an algebraic analogue of Courant algebroids. And he defined
standard complex and standard cohomology for Courant-Dorfman algebras.
Furthermore, he proved that there is an isomorphism of graded Poisson
algebras between the standard complex of a Courant algebroid $E$
and the associated Courant-Dorfman algebra $\mathcal{E}=\Gamma(E)$.

The main objective of this article is to develop a similar cohomology
theory, the so-called $H$-standard cohomology, for Courant-Dorfman
algebras as well as Leibniz algebras.

Given a Courant-Dorfman algebra $(\mathcal{E},R,\langle\cdot,\cdot\rangle,\partial,\circ)$,
and an $R$-submodule $\mathcal{H}\supseteq\partial R$ which is an
isotropic ideal of $\mathcal{E}$, let $(\mathcal{V},\nabla)$ be
an $\mathcal{H}$-representation of $\mathcal{E}$ (a left representation
of Leibniz algebra $\mathcal{E}$ such that $\nabla$ is a covariant
differential and $\mathcal{H}$ acts trivially on $\mathcal{V}$).
By generalizing Roytenberg's construction, we shall define the $\mathcal{H}$-standard
complex $(C^{\bullet}(\mathcal{E},\mathcal{H},\mathcal{V}),d)$ and
$\mathcal{H}$-standard cohomology $H^{\bullet}(\mathcal{E},\mathcal{H},\mathcal{V})$
(see Theorem \ref{thm:standard complex CDA} and Definition \ref{Def:generalized standard cohomology CDA}).
And when $\mathcal{E}/\mathcal{H}$ is projective, we shall prove
the following result:
\[
H^{\bullet}(\mathcal{E},\mathcal{H},\mathcal{V})\cong H_{CE}^{\bullet}(\mathcal{E}/\mathcal{H},\mathcal{V}).
\]
Note that we don't require the symmetric bilinear form $\langle\cdot,\cdot\rangle$
to be non-degenerate here. In particular when $\mathcal{E}$ is the
space of sections of a transitive Courant algebroid $E$ (over $M$),
and $\mathcal{H}=\rho^{*}(\Omega^{1}(M)),\ \mathcal{V}=C^{\infty}(M)$,
the result above recovers Stienon and Xu's conjecture.

Given a Leibniz algebra $L$ with left center $Z$, suppose $H\supseteq Z$
is an isotropic ideal of $L$, and $(V,\tau)$ is an $H$-representation
of $L$ (a left representation of $L$ such that $H$ acts trivially
on $V$), similarly we can define the $H$-standard complex $(C^{\bullet}(L,H,V),d)$
and $H$-standard cohomology $H^{\bullet}(L,H,V)$. And we have the
following result:
\[
H^{\bullet}(L,H,V)\cong H_{CE}^{\bullet}(L/H,V).
\]
This result can be proved directly, but in this paper we choose a
roundabout way. We construct a Courant-Dorfman algebra structure on
$\mathcal{L}=S^{\bullet}(Z)\otimes L$, and then prove there is an
isomorphism between the $H$-standard complex of $L$ and the $\mathcal{H}=S^{\bullet}(Z)\otimes H$-standard
complex of $\mathcal{L}$. Finally based on the result for Courant-Dorfman
algebras, we may obtain the result above by inference.

The structure of this paper is organized as follows:

In Section 2, we provide some basic knowledge about Leibniz algebras
and Courant-Dorfman algebras. In Section 3, we give the definition
of $H$-standard complex and cohomology for Courant-Dorfman algebras
and Leibniz algebras. In Section 4, we prove the isomorphism theorem
for Courant-Dorfman algebras, as a generalization of Stienon and Xu's
conjecture. In Section 5, we associate a Courant-Dorfman algebra structure
on $\mathcal{L}$ to any Leibniz algebra $L$, and discuss the relation
between $H$-standard complexes of them, finally we prove an isomorphism
theorem for Leibniz algebras.

\subsection*{Acknowledgements}

This paper is a part of my PhD dissertation, it is funded by the University
of Luxembourg. I would like to thank my advisors, Prof. Martin Schlichenmaier
and Prof. Ping Xu, for their continual encouragement and support.
I am particularly grateful to Prof. Zhangju Liu for instructive discussions
and helpful comments during my stay in Peking University.

\section{Preliminaries}

In this section we list some basic notions and properties about Leibniz
algebras and Courant-Dorfman algebras. For more details we refer to
\cite{LodayPirash93,RoytCDA}.
\begin{defn}
A (left) Leibniz algebra is a vector space $L$ over a field $\mathfrak{k}$
($\mathfrak{k}=\mathbb{R}$ for our main interest), endowed with a
bilinear map (called Leibniz bracket) $\circ:\ L\otimes L\rightarrow L$,
which satisfies (left) Leibniz rule:
\[
e_{1}\circ(e_{2}\circ e_{3})=(e_{1}\circ e_{2})\circ e_{3}+e_{2}\circ(e_{1}\circ e_{3})\quad\forall e_{1},e_{2},e_{3}\in L
\]
\end{defn}
\begin{example}
Given any Lie algebra $g$ and its representation $(V,\rho)$, the
semi-direct product $g\ltimes V$ with a bilinear operation $\circ$
defined by
\[
(A+v)\circ(B+w)\triangleq[A,B]+\rho(A)w,\quad\forall A,B\in g,\ v,w\in V
\]
 forms a Leibniz algebra.\label{Eg:Leibniz algebra}

In particular, for any vector space $V$, $gl(V)\oplus V$ is a Leibniz
algebra with Leibniz bracket
\[
(A+v)\circ(B+w)=[A,B]+Aw,\quad\forall A,B\in gl(V),\ v,w\in V.
\]
It is called an omni Lie algebra, and denoted by $ol(V)$ (see Weinstein
\cite{Weinstein}).\end{example}
\begin{defn}
A representation of a Leibniz algebra $L$ is a triple $(V,l,r)$,
where $V$ is a vector space equipped with two linear maps: left action
$l:\ L\rightarrow gl(V)$ and right action $r:\ L\rightarrow gl(V)$
satisfying the following equations:
\begin{equation}
l_{e_{1}\circ e_{2}}=[l_{e_{1}},l_{e_{2}}],\ r_{e_{1}\circ e_{2}}=[l_{e_{1}},r_{e_{2}}],\ r_{e_{1}}\circ l_{e_{2}}=-r_{e_{1}}\circ r_{e_{2}},\quad\forall e_{1},e_{2}\in L,\label{eq:Leibniz module}
\end{equation}

where the brackets on the right hand side are the commutators in $gl(V)$.

If $V$ is only equipped with left action $l:\ L\rightarrow gl(V)$
which satisfies $l_{e_{1}\circ e_{2}}=[l_{e_{1}},l_{e_{2}}]$, we
call $(V,l)$ a left representation of $L$.

For $(V,l,r)$(or $(V,l)$) a representation (or left representation)
of $L$, we call $V$ an $L$-module (or left $L$-module).
\end{defn}
Given a left representation $(V,l)$, there are two standard ways
to extend $V$ to an $L$-module. One is called symmetric extension,
with the right action defined as $r_{e}=-l_{e}$; the other is called
antisymmetric extension, with the right action defined as $r_{e}=0$.
In this paper, we always take the symmetric extension $(V,l,-l)$.
\begin{example}
Denote by $Z$ the left center of $L$, i.e.
\[
Z\triangleq\{e\in L|e\circ e^{\prime}=0,\ \forall e^{\prime}\in L\}.
\]
It is easily checked that 
\[
e_{1}\circ e_{2}+e_{2}\circ e_{1}\in Z,\qquad\forall e_{1},e_{2}\in L
\]
and $Z$ is an ideal of $L$. Moreover, the Leibniz bracket of $L$
induces a left action $\rho$ of $L$ on $Z$:
\[
\rho(e)z\triangleq e\circ z\qquad\forall e\in L,z\in Z.
\]
\end{example}
\begin{defn}
Given a Leibniz algebra $L$ and an $L$-module $(V,l,r)$, the Leibniz
cohomology of $L$ with coefficients in $V$ is the cohomology of
the cochain complex $C^{n}(L,V)=Hom(\otimes^{n}L,V)\ (n\geq0)$ with
the coboundary operator $d_{0}:\ C^{n}(L,V)\rightarrow C^{n+1}(L,V)$
given by:
\begin{eqnarray}
 &  & (d_{0}\eta)(e_{1},\cdots,e_{n+1})\nonumber \\
 & = & \sum_{a=1}^{n}(-1)^{a+1}l_{e_{a}}\eta(e_{1},\cdots,\hat{e_{a}},\cdots,e_{n+1})+(-1)^{n+1}r_{e_{n+1}}\eta(e_{1},\cdots,e_{n})\nonumber \\
 &  & +\sum_{1\leq a<b\leq n+1}(-1)^{a}\eta(e_{1},\cdots,\hat{e_{a}},\cdots,\hat{e_{b}},e_{a}\circ e_{b},\cdots,e_{n+1})\label{eq: Leibniz coboundary}
\end{eqnarray}

The resulting cohomology is denoted by $H^{\bullet}(L;V,l,r)$, or
simply $H^{\bullet}(L,V)$ if it causes no confusion.
\end{defn}
As a special type of Leibniz algebras, Courant-Dorfman algebras can
be viewed as the algebraization of Courant algebroids:
\begin{defn}
\label{Def:Courant-Dorfman-algebra} A Courant-Dorfman algebra $(\mathcal{E},R,\langle\cdot,\cdot\rangle,\partial,\circ)$
consists of the following data: 

a commutative algebra $R$ over a field $\mathfrak{k}$ ($\mathfrak{k}=\mathbb{R}$
for our main interest);

an $R$-module $\mathcal{E}$;

a symmetric bilinear form $\langle\cdot,\cdot\rangle:\ \mathcal{E}\otimes_{R}\mathcal{E}\rightarrow R$;

a derivation $\partial:\ R\rightarrow\mathcal{E}$;

a Dorfman bracket $\circ:\ \mathcal{E}\otimes\mathcal{E}\rightarrow\mathcal{E}$.

These data are required to satisfy the following conditions for any
$e,e_{1},e_{2},e_{3}\in\mathcal{E}$ and $f,g\in R$:

(1). $e_{1}\circ(fe_{2})=f(e_{1}\circ e_{2})+\langle e_{1},\partial f\rangle e_{2}$;

(2). $\langle e_{1},\partial(e_{2},e_{3})\rangle=\langle e_{1}\circ e_{2},e_{3}\rangle+\langle e_{2},e_{1}\circ e_{3}\rangle$

(3). $e_{1}\circ e_{2}+e_{2}\circ e_{1}=\partial\langle e_{1},e_{2}\rangle$;

(4). $e_{1}\circ(e_{2}\circ e_{3})=(e_{1}\circ e_{2})\circ e_{3}+e_{2}\circ(e_{1}\circ e_{3})$;

(5). $\partial f\circ e=0$;

(6). $\langle\partial f,\partial g\rangle=0.$
\end{defn}
Given a Courant-Dorfman algebra $\mathcal{E}$, we can recover the
anchor map 
\[
\rho:\ \mathcal{E}\rightarrow\mathfrak{X}^{1}=Der(R,R)
\]
 from the derivation $\partial$ by setting:
\begin{equation}
\rho(e)\cdot f\triangleq\langle e,\partial f\rangle.\label{eq:anchor CDA}
\end{equation}

Let $\Omega^{1}$ be the $R$-module of Kahler differentials with
the universal derivation $d_{R}:R\rightarrow\Omega^{1}$. By the universal
property of $\Omega^{1}$, there is a unique homomorphism of $R$-modules
$\rho^{*}:\Omega^{1}\rightarrow\mathcal{E}$ such that
\begin{equation}
\rho^{*}(d_{R}f)\triangleq\partial f,\qquad\forall f\in R\label{eq:coanchor CDA}
\end{equation}
 $\rho^{*}$ is called the coanchor map of $\mathcal{E}$. When the
bilinear form of $\mathcal{E}$ is non-degenerate, $\rho^{*}$ can
be equivalently defined by
\[
\langle\rho^{*}\alpha,e\rangle=\langle\alpha,\rho(e)\rangle,\qquad\forall\alpha\in\Omega^{1},e\in\mathcal{E},
\]
where $\langle\cdot,\cdot\rangle$ on the right handside is the natural
pairing of $\Omega^{1}$ and $\mathfrak{X}^{1}$.

In the following of this section, we always assume
\[
e\in\mathcal{E},\ \alpha\in\Omega^{1},\ f\in R.
\]

Given a Courant-Dorfman algebra $\mathcal{E}$, denote by $C^{n}(\mathcal{E},R)$
the space of all sequences $\omega=(\omega_{0},\cdots,\omega_{[\frac{n}{2}]})$,
where $\omega_{k}$ is a linear map from $(\otimes^{n-2k}\mathcal{E})\otimes(\odot^{k}\Omega^{1})$
to $R$, $\forall k$, satisfying the following conditions:

1). Weak skew-symmetricity in arguments of $\mathcal{E}$.

$\forall k$, $\omega_{k}$ is weakly skew-symmetric up to $\omega_{k+1}$:
\begin{eqnarray*}
 &  & \omega_{k}(e_{1},\cdots e_{a},e_{a+1},\cdots e_{n-2k};\alpha_{1},\cdots\alpha_{k})+\omega_{k}(e_{1},\cdots e_{a+1},e_{a},\cdots e_{n-2k};\alpha_{1},\cdots\alpha_{k})\\
 & = & -\omega_{k+1}(e_{1},\cdots\widehat{e_{a}},\widehat{e_{a+1}},\cdots e_{n-2k};d_{R}\langle e_{a},e_{b}\rangle,\alpha_{1},\cdots\alpha_{k}),
\end{eqnarray*}

2). Weak $R$-linearity in arguments of $\mathcal{E}$.

$\forall k$, $\omega_{k}$ is weakly $R$-linear up to $\omega_{k+1}$:
\begin{eqnarray*}
 &  & \omega_{k}(e_{1},\cdots fe_{a},\cdots e_{n-2k};\alpha_{1},\cdots\alpha_{k})\\
 & = & f\omega_{k}(e_{1},\cdots e_{a},\cdots e_{n-2k};\cdots)+\sum_{b>a}(-1)^{b-a}\langle e_{a},e_{b}\rangle\omega_{k+1}(e_{1},\cdots\widehat{e_{a}},\cdots\widehat{e_{b}},\cdots e_{n-2k};d_{R}f,\cdots),
\end{eqnarray*}

3). $R$-linearity in arguments of $\Omega^{1}$.

$\forall k$, $\omega_{k}$ is $R$-linear in arguments of $\Omega^{1}$:
\[
\omega_{k}(e_{1},\cdots e_{n-2k};\alpha_{1},\cdots f\alpha_{l},\cdots\alpha_{k})=f\omega_{k}(e_{1},\cdots e_{n-2k};\alpha_{1},\cdots\alpha_{l},\cdots\alpha_{k}),
\]

Then $C^{\bullet}(\mathcal{E},R)=\bigoplus_{n}C^{n}(\mathcal{E},R)$
becomes a cochain complex, with coboundary map $d$ given for any
$\omega\in C^{n}(\mathcal{E},R)$ by:

\begin{eqnarray*}
 &  & (d\omega)_{k}(e_{1},\cdots,e_{n+1-2k};\alpha_{1},\cdots,\alpha_{k})\\
 & = & \sum_{a}(-1)^{a+1}\rho(e_{a})\omega_{k}(\cdots\widehat{e_{a}},\cdots;\cdots)+\sum_{a<b}(-1)^{a}\omega_{k}(\cdots\widehat{e_{a}},\cdots\widehat{e_{b}},e_{a}\circ e_{b},\cdots;\cdots)\\
 &  & +\sum_{i}\omega_{k-1}(\rho^{*}(\alpha_{i}),e_{1},\cdots,e_{n+1-2k};\alpha_{1},\cdots\widehat{\alpha_{i}},\cdots\alpha_{k})\\
 &  & +\sum_{a,i}(-1)^{a}\omega_{k}(\cdots,\widehat{e_{a}},\cdots;\cdots,\widehat{\alpha_{i}},\iota_{\rho(e_{a})}d_{R}\alpha_{i},\cdots).
\end{eqnarray*}

\begin{defn}
\label{Def:standard cohomology CDA}The cochain complex $(C^{\bullet}(\mathcal{E},R),d)$
is called the standard (cochain) complex of the Courant-Dorfman algebra
$\mathcal{E}$, the resulting cohomology is called the standard cohomology
of $\mathcal{E}$, and denoted by $H_{st}^{\bullet}(\mathcal{E})$.
\end{defn}

\section{$H$-Standard cohomology}

In this section, we give the definition of $H$-standard cohomology
for Courant-Dorfman algebras and Leibniz algebras respectively. The
inspiration comes from Roytenberg's construction of standard cohomology
for Courant-Dorfman algebras \cite{RoytCDA}.

\subsection{For Courant-Dorfman algebras}

Given a Courant-Dorfman algebra $(\mathcal{E},R,\langle\cdot,\cdot\rangle,\partial,\circ)$,
suppose $\mathcal{H}$ is an $R$-submodule as well as an isotropic
ideal of $\mathcal{E}$ containing $\partial R$, and $(\mathcal{V},\nabla)$
is an $\mathcal{H}$-representation of $\mathcal{E}$, which is defined
as follows: 
\begin{defn}
An $\mathcal{H}$-trivial representation (or $\mathcal{H}$-representation
for short) of a Courant-Dorfman algebra $(\mathcal{E},R,\langle\cdot,\cdot\rangle,\partial,\circ)$
is a pair $(\mathcal{V},\nabla)$, where $\mathcal{V}$ is an $R$-module,
and $\nabla:\mathcal{E}\rightarrow Der(\mathcal{V})$ is a homomorphism
of Leibniz algebras such that:
\begin{eqnarray*}
\nabla_{\alpha}v & = & 0\\
\nabla_{fe}v & = & f\nabla_{e}v\\
\nabla_{e}(fv) & = & (\rho(e)f)v+f(\nabla_{e}v)\qquad\forall\alpha\in\mathcal{H},e\in\mathcal{E},f\in R,v\in\mathcal{V}.
\end{eqnarray*}
\end{defn}
\begin{example}
Since 
\[
\rho(\alpha)f=\langle\alpha,\partial f\rangle=0,\qquad\forall\alpha\in\mathcal{H},f\in R,
\]
$\mathcal{H}$ is actually an $R$-submodule of $ker\rho$. It is
easily checked that $(R,\rho)$ is an $\mathcal{H}$-representation
of $\mathcal{E}$.
\end{example}
In the following of this subsection, we always assume
\[
e\in\mathcal{E},\ \alpha\in\mathcal{H},\ f\in R.
\]

Denote by $C^{n}(\mathcal{E},\mathcal{H},\mathcal{V})$ the space
of all sequences
\[
\omega=(\omega_{0},\cdots,\omega_{[\frac{n}{2}]}),
\]
where the $\mathfrak{k}$-linear maps
\[
\omega_{k}:\ (\otimes^{n-2k}\mathcal{E})\otimes(\odot^{k}\mathcal{H})\rightarrow\mathcal{V}
\]
satisfy the following conditions:

1). Weak skew-symmetricity in arguments of $\mathcal{E}$.

$\forall k$, $\omega_{k}$ is weakly skew-symmetric up to $\omega_{k+1}$:
\begin{eqnarray*}
 &  & \omega_{k}(e_{1},\cdots e_{a},e_{a+1},\cdots e_{n-2k};\alpha_{1},\cdots\alpha_{k})+\omega_{k}(e_{1},\cdots e_{a+1},e_{a},\cdots e_{n-2k};\alpha_{1},\cdots\alpha_{k})\\
 & = & -\omega_{k+1}(e_{1},\cdots\widehat{e_{a}},\widehat{e_{a+1}},\cdots e_{n-2k};\partial\langle e_{a},e_{b}\rangle,\alpha_{1},\cdots\alpha_{k}),
\end{eqnarray*}

2). Weak $R$-linearity in arguments of $\mathcal{E}$.

$\forall k$, $\omega_{k}$ is weakly $R$-linear up to $\omega_{k+1}$:
\begin{eqnarray*}
 &  & \omega_{k}(e_{1},\cdots fe_{a},\cdots e_{n-2k};\alpha_{1},\cdots\alpha_{k})\\
 & = & f\omega_{k}(e_{1},\cdots e_{a},\cdots e_{n-2k};\cdots)+\sum_{b>a}(-1)^{b-a}\langle e_{a},e_{b}\rangle\omega_{k+1}(e_{1},\cdots\widehat{e_{a}},\cdots\widehat{e_{b}},\cdots e_{n-2k};\partial f,\cdots),
\end{eqnarray*}

3). $R$-linearity in arguments of $\mathcal{H}$.

$\forall k$, $\omega_{k}$ is $R$-linear in arguments of $\mathcal{H}$:
\[
\omega_{k}(e_{1},\cdots e_{n-2k};\alpha_{1},\cdots f\alpha_{l},\cdots\alpha_{k})=f\omega_{k}(e_{1},\cdots e_{n-2k};\alpha_{1},\cdots\alpha_{l},\cdots\alpha_{k}).
\]

\begin{thm}
\label{thm:standard complex CDA}$C^{\bullet}(\mathcal{E},\mathcal{H},\mathcal{V})\triangleq\bigoplus_{n}C^{n}(\mathcal{E},\mathcal{H},\mathcal{V})$
is a cochain complex under the coboundary map $d=d_{0}+\delta+d^{\prime}$,
where $d_{0}$ is the coboundary map (Equation \ref{eq: Leibniz coboundary})
corresponding to the Leibniz cohomology of $\mathcal{E}$ with coefficients
in $(\mathcal{V},\nabla,-\nabla)$, and $\delta,d^{\prime}$ are defined
for any $\omega\in C^{n}(\mathcal{E},\mathcal{H},\mathcal{V})$ respectively
by: 
\begin{eqnarray*}
(\delta\omega)_{k}(e_{1},\cdots e_{n+1-2k};\alpha_{1},\cdots\alpha_{k}) & \triangleq & \sum_{i}\omega_{k-1}(\alpha_{i},e_{1},\cdots,e_{n+1-2k};\cdots\widehat{\alpha_{i}},\cdots),\\
(d^{\prime}\omega)_{k}(e_{1},\cdots e_{n+1-2k};\alpha_{1},\cdots\alpha_{k}) & \triangleq & \sum_{a,i}(-1)^{a+1}\omega_{k}(\cdots\widehat{e_{a}}\cdots;\cdots\widehat{\alpha_{i}},\alpha_{i}\circ e_{a}\cdots).
\end{eqnarray*}
\end{thm}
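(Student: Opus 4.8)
The plan is to verify two things: that $d=d_{0}+\delta+d'$ sends $C^{n}(\mathcal{E},\mathcal{H},\mathcal{V})$ into $C^{n+1}(\mathcal{E},\mathcal{H},\mathcal{V})$, and that $d^{2}=0$. The organizing principle I would use throughout is the grading of a cochain $\omega=(\omega_{0},\ldots,\omega_{[n/2]})$ by the degree $k$ in the $\mathcal{H}$-arguments. Reading off the formulas, $(d_{0}\omega)_{k}$ and $(d'\omega)_{k}$ are built from $\omega_{k}$, whereas $(\delta\omega)_{k}$ is built from $\omega_{k-1}$; thus, with respect to this grading, $d_{0}$ and $d'$ have degree $0$ and $\delta$ has degree $+1$, while all three raise the total degree $n$ by one. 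I would also record at the outset the two facts that make the coefficients behave: because $\mathcal{H}\subseteq\ker\rho$ and $\mathcal{H}$ is isotropic, any anchor $\rho(\alpha)$ with $\alpha\in\mathcal{H}$ and any pairing $\langle\alpha,\alpha'\rangle$ of two elements of $\mathcal{H}$ vanish; and because $(\mathcal{V},\nabla,-\nabla)$ is the symmetric extension of a left representation, it is a genuine representation of the Leibniz algebra $\mathcal{E}$ (the identities \eqref{eq:Leibniz module} hold automatically once $\nabla$ is a homomorphism), so $d_{0}$ is literally a Leibniz coboundary and $d_{0}^{2}=0$.

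For well-definedness I would check the three defining conditions (weak skew-symmetry, weak $R$-linearity in the $\mathcal{E}$-slots, $R$-linearity in the $\mathcal{H}$-slots) separately for $d_{0}\omega$, $\delta\omega$, and $d'\omega$. The $d_{0}$-part is Roytenberg's computation carried over to $\mathcal{V}$-coefficients, the new inputs being the $\nabla$-axioms: the derivation rule $\nabla_{e}(fv)=(\rho(e)f)v+f\nabla_{e}v$ supplies precisely the $\rho$-correction, and together with condition (1) of Definition \ref{Def:Courant-Dorfman-algebra} (the failure of $R$-linearity of the Dorfman bracket) it reproduces the $\langle e_{a},e_{b}\rangle$-correction demanded by weak $R$-linearity. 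For the $\delta$- and $d'$-parts the points to watch are that $\alpha_{i}\circ e_{a}$ must again lie in $\mathcal{H}$, which holds since $\mathcal{H}$ is an ideal, and that $R$-linearity in the $\mathcal{H}$-slots is compatible with the bracket; here the vanishing of $\rho(\alpha)$ and of $\langle\alpha,\alpha'\rangle$ on $\mathcal{H}$ is what removes the would-be obstructions.

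For $d^{2}=0$ I would split the identity by $\mathcal{H}$-degree into the three independent equations $(d_{0}+d')^{2}=0$, $(d_{0}+d')\delta+\delta(d_{0}+d')=0$, and $\delta^{2}=0$. The equation $\delta^{2}=0$ is immediate: $\delta^{2}\omega$ collects terms $\omega_{k-2}(\alpha_{j},\alpha_{i},\ldots)$ summed over ordered pairs, and for each unordered pair weak skew-symmetry rewrites the transposition of the first two $\mathcal{E}$-arguments as $-\omega_{k-1}(\ldots;\partial\langle\alpha_{i},\alpha_{j}\rangle,\ldots)$, which is zero by isotropy; hence the two orderings cancel. For $(d_{0}+d')^{2}=0$ I would use $d_{0}^{2}=0$ from the observation above and then verify $d_{0}d'+d'd_{0}+(d')^{2}=0$ by direct expansion, the essential identities being the Leibniz rule (condition (4)), $\partial f\circ e=0$ (condition (5)), and the triviality $\nabla_{\alpha}=0$, which annihilates exactly those terms in which $d_{0}$ would act through an element of $\mathcal{H}$.

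The main obstacle is the mixed equation $d_{0}\delta+\delta d_{0}+d'\delta+\delta d'=0$. After $\delta$ inserts some $\alpha_{i}$ as the first $\mathcal{E}$-argument, the ensuing $d_{0}$ or $d'$ acts on that inserted slot, and the resulting terms sort into three families. The terms in which $d_{0}$ applies the anchor action $\nabla_{\alpha_{i}}$ to the inserted argument drop out because $\mathcal{H}$ acts trivially on $\mathcal{V}$. The terms in which $d_{0}$ brackets the inserted $\alpha_{i}$ against some $e_{b}$ (producing $\alpha_{i}\circ e_{b}$ or $e_{b}\circ\alpha_{i}$ in an $\mathcal{E}$-slot) must be matched against the $d'\delta$ and $\delta d'$ contributions, where the same brackets occur in an $\mathcal{H}$-slot; reconciling the two uses condition (3), $e_{1}\circ e_{2}+e_{2}\circ e_{1}=\partial\langle e_{1},e_{2}\rangle$, together with isotropy to discard the symmetric correction, so that transporting a bracket between an $\mathcal{E}$-slot and an $\mathcal{H}$-slot is consistent. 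Finally the terms in which the inserted argument is left untouched cancel against $\delta d_{0}+\delta d'$ after re-indexing. I expect the laborious part to be the sign-and-index bookkeeping in this last equation, in particular tracking the position of the inserted $\mathcal{H}$-element as it passes through the antisymmetrized $\mathcal{E}$-arguments; the conceptual content, by contrast, is carried entirely by conditions (3)--(5) of Definition \ref{Def:Courant-Dorfman-algebra}, the isotropy of $\mathcal{H}$, and the triviality of its action on $\mathcal{V}$.
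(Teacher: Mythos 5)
Your overall architecture (first show that $d$ preserves $C^{\bullet}(\mathcal{E},\mathcal{H},\mathcal{V})$, then show $d^{2}=0$, with $d_{0}^{2}=0$ coming for free because the symmetric extension $(\mathcal{V},\nabla,-\nabla)$ is a genuine Leibniz representation) matches the paper, and your treatment of $\delta^{2}=0$ is correct. But the organizing idea of your proposal --- that the grading by the number $k$ of $\mathcal{H}$-arguments makes $d_{0},d^{\prime}$ degree $0$ and $\delta$ degree $+1$, so that $d^{2}=0$ splits into the three \emph{independent} identities $(d_{0}+d^{\prime})^{2}=0$, $(d_{0}+d^{\prime})\delta+\delta(d_{0}+d^{\prime})=0$, and $\delta^{2}=0$ --- is false, and two of those three identities fail. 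The components of a cochain are not independent coordinates: the weak skew-symmetry condition converts an $\omega_{k-1}$-expression into an $\omega_{k}$-expression, so contributions to $(d^{2}\omega)_{k}$ coming from $\omega_{k-1}$ and from $\omega_{k}$ can, and do, cancel each other across your grading. Concretely, since $d_{0}^{2}=0$, the degree-zero piece is
\begin{equation*}
\big((d_{0}+d^{\prime})^{2}\omega\big)_{k}(e_{1},\cdots;\alpha_{1},\cdots)=\sum_{i,\,a<b}(-1)^{a+b}\,\omega_{k}\big(\cdots\widehat{e_{a}}\cdots\widehat{e_{b}}\cdots;\cdots\widehat{\alpha_{i}},\partial\langle e_{a},\alpha_{i}\circ e_{b}\rangle,\cdots\big),
\end{equation*}
because the Leibniz identity (condition (4)) reduces $\alpha_{i}\circ(e_{a}\circ e_{b})-(\alpha_{i}\circ e_{a})\circ e_{b}+(\alpha_{i}\circ e_{b})\circ e_{a}$ to $e_{a}\circ(\alpha_{i}\circ e_{b})+(\alpha_{i}\circ e_{b})\circ e_{a}=\partial\langle e_{a},\alpha_{i}\circ e_{b}\rangle$ by condition (3) --- not to $0$; neither condition (5) nor $\nabla_{\alpha}=0$ removes this, since $\partial\langle e_{a},\alpha_{i}\circ e_{b}\rangle$ sits inside an $\mathcal{H}$-slot of $\omega_{k}$. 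Symmetrically, the mixed piece $(d_{0}+d^{\prime})\delta+\delta(d_{0}+d^{\prime})$ is also nonzero: it leaves $\omega_{k-1}$-terms carrying $\alpha_{i}\circ e_{a}$ once in the first $\mathcal{E}$-slot and once in the $a$-th, and transporting one to the other by repeated weak skew-symmetry produces exactly the corrections $\sum_{i,a<b}(-1)^{a+b+1}\omega_{k}(\cdots;\cdots\widehat{\alpha_{i}},\partial\langle e_{a},\alpha_{i}\circ e_{b}\rangle,\cdots)$. Only the sum of the two pieces vanishes; this cross-degree cancellation is precisely the content of the paper's final computation, and it is invisible to your decomposition.

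The same error infects your well-definedness step: $d_{0}\omega$, $\delta\omega$, $d^{\prime}\omega$ do not individually satisfy the cochain conditions, so they cannot be checked ``separately''. For instance, by Equation \ref{eq:d0 weak skewsym} of the paper, weak skew-symmetry of $d_{0}\omega$ fails by the extra term $-\omega_{k}(\partial\langle e_{i},e_{i+1}\rangle,e_{1},\cdots,\widehat{e_{i}},\widehat{e_{i+1}},\cdots;\cdots)$, which cancels only against an equal and opposite anomaly of $\delta\omega$; likewise the anomalies in weak $R$-linearity, and in $R$-linearity in the $\mathcal{H}$-slots (Equation \ref{eq:d R linear}), cancel only between $d_{0}+d^{\prime}$ and $\delta$. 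The correct repair of your plan is exactly the paper's proof: compute the defect of each condition for each of $d_{0},\delta,d^{\prime}$ (and of each of the six products inside $d^{2}$), and verify that the defects cancel in the sum --- using, as you rightly anticipated, conditions (3)--(5), isotropy of $\mathcal{H}$, and triviality of the $\mathcal{H}$-action, but crucially also the weak skew-symmetry relation tying $\omega_{k-1}$ to $\omega_{k}$, which your independence argument discards.
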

\begin{lem}
$C^{\bullet}(\mathcal{E},\mathcal{H},\mathcal{V})$ is closed under
$d=d_{0}+\delta+d^{\prime}$.\end{lem}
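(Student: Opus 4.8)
The goal is to show that if $\omega\in C^{n}(\mathcal{E},\mathcal{H},\mathcal{V})$ with components $\omega_{k}$, then every component $(d\omega)_{k}$ of $d\omega=d_{0}\omega+\delta\omega+d^{\prime}\omega$ again satisfies the three defining conditions of $C^{n+1}(\mathcal{E},\mathcal{H},\mathcal{V})$: weak skew-symmetry and weak $R$-linearity in the $\mathcal{E}$-slots, and (strict) $R$-linearity in the $\mathcal{H}$-slots. The plan is to verify the three conditions one at a time; in each case I would expand $(d\omega)_{k}$ as the sum of its three constituents and show that, although none of $d_{0}\omega,\delta\omega,d^{\prime}\omega$ respects the condition on its own, their failures cancel. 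The algebraic inputs driving the cancellations are: Courant-Dorfman axioms (1) and (3) together with the derivation property of $\partial$; the definition $\rho(e)f=\langle e,\partial f\rangle$ of the anchor; the representation axioms for $\nabla$, in particular $\nabla_{e}(fv)=(\rho(e)f)v+f\nabla_{e}v$; the isotropy of $\mathcal{H}$, so $\langle\alpha,\beta\rangle=0$ for $\alpha,\beta\in\mathcal{H}$; the inclusion $\partial R\subseteq\mathcal{H}\subseteq\ker\rho$; and the fact that $\mathcal{H}$ is an ideal, so that $\alpha\circ e\in\mathcal{H}$ and $d^{\prime}\omega$ genuinely lands in the $\mathcal{H}$-slots. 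I would also use freely that each $\omega_{k}$ is symmetric in its $\mathcal{H}$-arguments (they live in $\odot^{k}\mathcal{H}$), so the position in which a new $\mathcal{H}$-entry is inserted is irrelevant.

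To illustrate with the strict condition, $R$-linearity in the $\mathcal{H}$-slots: pulling a scalar $f$ out of the $l$-th $\mathcal{H}$-argument, $d_{0}\omega$ leaves a residual $\sum_{a}(-1)^{a+1}(\rho(e_{a})f)\,\omega_{k}(\cdots\widehat{e_{a}}\cdots;\cdots\alpha_{l}\cdots)$, coming from $\nabla_{e_{a}}(f\cdot)=(\rho(e_{a})f)\cdot+f\nabla_{e_{a}}(\cdot)$. On the other side, the identity $(f\alpha)\circ e=f(\alpha\circ e)+\langle\alpha,e\rangle\,\partial f-(\rho(e)f)\,\alpha$, immediate from axioms (1), (3) and the derivation property of $\partial$, shows that $d^{\prime}\omega$ produces the very same residual with the opposite sign, so these cancel. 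The leftover terms $\langle\alpha_{l},e_{a}\rangle\,\partial f$ then cancel against the weak-$R$-linearity defect of $\delta\omega$: when $\delta$ promotes $f\alpha_{l}$ into the first $\mathcal{E}$-slot and one applies weak $R$-linearity of $\omega_{k-1}$, one obtains exactly terms $\omega_{k}(\cdots\widehat{e_{c}}\cdots;\partial f,\cdots)$, where $\partial f\in\mathcal{H}$ is a legitimate argument precisely because $\partial R\subseteq\mathcal{H}$. The signs match because here $\delta$ contributes $(-1)^{c}$ while $d^{\prime}$ contributes $(-1)^{c+1}$.

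The weak skew-symmetry and weak $R$-linearity conditions are where I expect the real difficulty, since they relate level $k$ to level $k+1$ and the three constituents shift the $\mathcal{E}/\mathcal{H}$ split in opposite directions ($\delta$ raises the number of $\mathcal{E}$-arguments by one, $d^{\prime}$ lowers it). The decisive mechanism for weak skew-symmetry is the following: interchanging $e_{a}\leftrightarrow e_{a+1}$, the bracket term of $d_{0}\omega$ contributes $e_{a}\circ e_{a+1}+e_{a+1}\circ e_{a}=\partial\langle e_{a},e_{a+1}\rangle$ by axiom (3), and since $\partial\langle e_{a},e_{a+1}\rangle\in\mathcal{H}$ this is exactly the new $\mathcal{H}$-argument demanded by the level-$(k{+}1)$ correction $-(d\omega)_{k+1}(\cdots;\partial\langle e_{a},e_{a+1}\rangle,\cdots)$, where it reappears through the $\delta$-term promoted back into an $\mathcal{E}$-slot. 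Matching an entry \emph{appended} as the last $\mathcal{E}$-argument (from $d_{0}$) with the same entry \emph{prepended} as the first $\mathcal{E}$-argument (from $\delta$) costs further weak-skew corrections that must themselves be reconciled, and it is here that isotropy of $\mathcal{H}$ together with $\mathcal{H}\subseteq\ker\rho$ kills the spurious terms in which two $\mathcal{H}$-entries would be paired or an anchor $\rho(\alpha)$ of an $\mathcal{H}$-entry would appear. The computations are elementary once this correspondence is fixed; the only genuine obstacle I anticipate is carrying out the term-by-term matching across $d_{0}\omega$, $\delta\omega$ and $d^{\prime}\omega$ at both levels without sign errors. I would organize the bookkeeping by the type of object produced, namely $\partial\langle e_{a},e_{b}\rangle$, $\partial f$, or $\alpha\circ e$, and reconcile the resulting terms group by group.
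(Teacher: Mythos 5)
Your proposal follows essentially the same route as the paper's own proof: split $d=d_{0}+\delta+d^{\prime}$, compute the failure of each constituent separately against the three defining conditions, and show the defects cancel pairwise --- the $\rho(e_{a})f$ residues between $d_{0}$ and $d^{\prime}$, the $\langle\alpha_{l},e_{a}\rangle\,\partial f$ residues against the weak-$R$-linearity defect of $\delta$, and the $\partial\langle e_{a},e_{a+1}\rangle$ terms of $d_{0}$ reappearing through the $\delta$-part of the level-$(k{+}1)$ correction --- and your worked case of $R$-linearity in the $\mathcal{H}$-arguments reproduces exactly the paper's displayed identities for $(d_{0}+d^{\prime})\omega$ and $\delta\omega$. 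The paper itself omits the remaining term-by-term verifications as ``straightforward but tedious'' and only records the resulting defect formulas, so your sketch matches it in both strategy and level of detail.
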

\begin{proof}
$\forall\omega\in C^{n}(\mathcal{E},\mathcal{H},\mathcal{V})$, we
need to prove that $d\omega\in C^{n+1}(\mathcal{E},\mathcal{H},\mathcal{V})$.

First, we prove the weak skew-symmetricity in arguments of $\mathcal{E}$.
We will calculate $d_{0},\delta,d^{\prime}$ parts separately. The
calculations are straightforward from the definitions but rather tedious.
To save space, we omit the details, and only list the results of calculations
here.
\begin{eqnarray}
 &  & (d_{0}\omega)_{k}(e_{1},\cdots e_{i},e_{i+1}\cdots e_{n+1-2k};\alpha_{1},\cdots\alpha_{k})+(d_{0}\omega)_{k}(e_{1},\cdots e_{i+1},e_{i}\cdots;\cdots)\label{eq:d0 weak skewsym}\\
 & = & -(d_{0}\omega)_{k+1}(e_{1},\cdots\widehat{e_{i}},\widehat{e_{i+1}},\cdots;\partial\langle e_{i},e_{i+1}\rangle,\cdots)-\omega_{k}(\partial\langle e_{i},e_{i+1}\rangle,e_{1},\cdots,\widehat{e_{i}},\widehat{e_{i+1}},\cdots;\cdots),\nonumber 
\end{eqnarray}
\begin{eqnarray*}
 &  & (\delta\omega)_{k}(e_{1},\cdots e_{i},e_{i+1}\cdots e_{n+1-2k};\alpha_{1},\cdots\alpha_{k})+(\delta\omega)_{k}(e_{1},\cdots e_{i+1},e_{i}\cdots;\cdots)\\
 & = & -(\delta\omega)_{k+1}(e_{1},\cdots,\widehat{e_{i}},\widehat{e_{i+1}},\cdots;\partial\langle e_{i},e_{i+1}\rangle,\cdots)+\omega_{k}(\partial\langle e_{i},e_{i+1}\rangle,e_{1},\cdots,\widehat{e_{i}},\widehat{e_{i+1}},\cdots;\cdots),
\end{eqnarray*}
\begin{eqnarray}
 &  & (d^{\prime}\omega)_{k}(e_{1}\cdots e_{i},e_{i+1}\cdots e_{n+1-2k};\alpha_{1},\cdots\alpha_{k})+(d^{\prime}\omega)_{k}(e_{1},\cdots e_{i+1},e_{i}\cdots;\cdots)\label{eq:d prime weak skewsym}\\
 & = & -(d^{\prime}\omega)_{k+1}(e_{1},\cdots,\widehat{e_{i}},\widehat{e_{i+1}},\cdots;\partial\langle e_{i},e_{i+1}\rangle,\cdots).\nonumber 
\end{eqnarray}

The sum of the three equations above tells:
\begin{eqnarray*}
 &  & (d\omega)_{k}(e_{1},\cdots e_{i},e_{i+1}\cdots e_{n+1-2k};\alpha_{1},\cdots\alpha_{k})+(d\omega)_{k}(e_{1},\cdots e_{i+1},e_{i}\cdots;\cdots)\\
 & = & -(d\omega)_{k+1}(e_{1},\cdots,\widehat{e_{i}},\widehat{e_{i+1}},\cdots;\partial\langle e_{i},e_{i+1}\rangle,\cdots),
\end{eqnarray*}
i.e. $(d\omega)_{k}$ is weakly skew-symmetric up to $(d\omega)_{k+1}$.

Next, we prove the weak $R$-linearity in arguments of $\mathcal{E}$.
By direct calculations, we have the following:
\begin{eqnarray}
 &  & (d_{0}\omega)_{k}(e_{1},\cdots fe_{i},\cdots e_{n+1-2k};\alpha_{1},\cdots\alpha_{k})-f(d_{0}\omega)_{k}(e_{1},\cdots e_{i},\cdots;\cdots)\label{eq:d0 weak R linear}\\
 & = & \sum_{a>i}(-1)^{a-i}\langle e_{i},e_{a}\rangle(d_{0}\omega)_{k+1}(\cdots\widehat{e_{i}},\cdots\widehat{e_{a}},\cdots;\partial f,\cdots)\nonumber \\
 &  & +\sum_{i<a}(-1)^{a-i}\langle e_{i},e_{a}\rangle\omega_{k}(\partial f,e_{1},\cdots\widehat{e_{i}},\cdots\widehat{e_{a}},\cdots;\cdots)\nonumber 
\end{eqnarray}

\begin{eqnarray*}
 &  & (\delta\omega)_{k}(e_{1},\cdots fe_{i},\cdots e_{n+1-2k};\alpha_{1},\cdots\alpha_{k})-f(\delta\omega)_{k}(e_{1},\cdots e_{i},\cdots;\cdots)\\
 & = & \sum_{i<a}(-1)^{a-i}\langle e_{i},e_{a}\rangle(\delta\omega)_{k+1}(e_{1},\cdots\widehat{e_{i}},\cdots\widehat{e_{a}},\cdots;\partial f,\cdots)\\
 &  & -\sum_{i<a}(-1)^{a-i}\langle e_{i},e_{a}\rangle\omega_{k}(\partial f,e_{1},\cdots\widehat{e_{i}},\cdots\widehat{e_{a}},\cdots;\cdots)
\end{eqnarray*}

\begin{eqnarray}
 &  & (d^{\prime}\omega)_{k}(e_{1},\cdots fe_{i},\cdots e_{n+1-2k};\alpha_{1},\cdots\alpha_{k})-f(d^{\prime}\omega)_{k}(e_{1},\cdots e_{i},\cdots;\cdots)\label{eq:d prime weak R linear}\\
 & = & \sum_{a>i}(-1)^{a-i}\langle e_{i},e_{a}\rangle(d^{\prime}\omega)_{k+1}(e_{1},\cdots\widehat{e_{i}},\cdots\widehat{e_{a}},\cdots;\partial f,\cdots)\nonumber 
\end{eqnarray}

The sum of the three equations above is:
\begin{eqnarray*}
 &  & (d\omega)_{k}(e_{1},\cdots fe_{i},\cdots e_{n+1-2k};\alpha_{1},\cdots\alpha_{k})-f(d\omega)_{k}(e_{1},\cdots e_{i},\cdots;\cdots)\\
 & = & \sum_{a>i}(-1)^{a-i}\langle e_{i},e_{a}\rangle(d\omega)_{k+1}(e_{1},\cdots\widehat{e_{i}},\cdots\widehat{e_{a}},\cdots;\partial f,\cdots)
\end{eqnarray*}
i.e. $(d\omega)_{k}$ is weakly $R$-linear up to $(d\omega)_{k+1}$.

Finally, we need to prove the $R$-linearity in arguments of $\mathcal{H}$.
Again by direct calculations, we have the following:
\begin{eqnarray}
 &  & (d_{0}\omega+d^{\prime}\omega)_{k}(e_{1},\cdots e_{n+1-2k};\alpha_{1},\cdots f\alpha_{l},\cdots\alpha_{k})-f(d_{0}\omega+d^{\prime}\omega)_{k}(\cdots;\cdots\alpha_{l},\cdots)\label{eq:d R linear}\\
 & = & \sum_{a}(-1)^{a+1}\langle\alpha_{l},e_{a}\rangle\omega_{k}(\cdots\widehat{e_{a}},\cdots;\partial f,\cdots\widehat{\alpha_{l}},\cdots),\nonumber 
\end{eqnarray}

and 
\begin{eqnarray*}
 &  & (\delta\omega)_{k}(e_{1},\cdots e_{n+1-2k};\alpha_{1},\cdots f\alpha_{l},\cdots\alpha_{k})-f(\delta\omega)_{k}(\cdots;\cdots\alpha_{l},\cdots)\\
 & = & \sum_{a}(-1)^{a}\langle\alpha_{l},e_{a}\rangle\omega_{k}(\cdots\widehat{e_{a}},\cdots;\partial f,\cdots\widehat{\alpha_{l}},\cdots),
\end{eqnarray*}

so
\[
(d\omega)_{k}(e_{1},\cdots e_{n+1-2k};\alpha_{1},\cdots f\alpha_{l},\cdots\alpha_{k})-f(d\omega)_{k}(\cdots;\cdots\alpha_{l},\cdots)=0.
\]

The lemma is proved.
\end{proof}
Now we turn to the proof of Theorem \ref{thm:standard complex CDA}:
\begin{proof}
By the lemma above, we only need to prove $d^{2}=0$.

$d^{2}$ can be divided into six parts 
\[
d^{2}=d_{0}^{2}+\delta^{2}+(d_{0}\circ\delta+\delta\circ d_{0})+(d_{0}\circ d^{\prime}+d^{\prime}\circ d_{0})+(\delta\circ d^{\prime}+d^{\prime}\circ\delta)+d^{\prime2}.
\]
The first part equals $0$, so we only need to compute the other five
parts. By direct calculations, we have the following:

\[
(\delta^{2}\omega)_{k}(e_{1},\cdots,e_{n+2-2k};\alpha_{1},\cdots,\alpha_{k})=0,
\]
\[
((d_{0}\circ\delta+\delta\circ d_{0})\omega)_{k}(e_{1},\cdots;\alpha_{1},\cdots)=-\sum_{i,a}\omega_{k-1}(\cdots\widehat{e_{a}},\alpha_{i}\circ e_{a},\cdots;\cdots\widehat{\alpha_{i}},\cdots),
\]
\[
((d_{0}\circ d^{\prime}+d^{\prime}\circ d_{0})\omega)_{k}(e_{1},\cdots;\alpha_{1},\cdots)=\sum_{j,a<c}(-1)^{a+c}\omega_{k}(\cdots\widehat{e_{a}},\cdots\widehat{e_{c}},\cdots;\cdots\alpha_{j}\circ(e_{a}\circ e_{c}),\cdots),
\]
\[
((\delta\circ d^{\prime}+d^{\prime}\circ\delta)\omega)_{k}(e_{1},\cdots;\alpha_{1},\cdots)=\sum_{j,a}(-1)^{a+1}\omega_{k-1}(\alpha_{j}\circ e_{a},\cdots\widehat{e_{a}},\cdots;\cdots\widehat{\alpha_{j}},\cdots),
\]
\[
(d^{\prime2}\omega)_{k}(e_{1},\cdots;\alpha_{1},\cdots)=\sum_{a<b,i}(-1)^{a+b}\omega_{k}(\cdots\widehat{e_{a}},\cdots\widehat{e_{b}},\cdots;\cdots\widehat{\alpha_{i}},(\alpha_{i}\circ e_{b})\circ e_{a}-(\alpha_{i}\circ e_{a})\circ e_{b},\cdots),
\]

The sum of the above equations is:
\begin{eqnarray*}
 &  & (d^{2}\omega)_{k}(e_{1},\cdots e_{n+2-2k};\alpha_{1},\cdots\alpha_{k})\\
 & = & \sum_{j,a}(-1)^{a+1}\omega_{k-1}(\alpha_{j}\circ e_{a},\cdots\widehat{e_{a}},\cdots;\cdots\widehat{\alpha_{j}},\cdots)-\sum_{i,a}\omega_{k-1}(\cdots\alpha_{i}\circ e_{a},\cdots;\cdots\widehat{\alpha_{i}},\cdots)\\
 &  & +\sum_{j,a<b}(-1)^{a+b}\omega_{k}(\cdots\widehat{e_{a}},\cdots\widehat{e_{b}},\cdots;\cdots\alpha_{j}\circ(e_{a}\circ e_{b}),\cdots)\\
 &  & +\sum_{a<b,i}(-1)^{a+b}\omega_{k}(\cdots\widehat{e_{a}},\cdots\widehat{e_{b}},\cdots;\cdots\widehat{\alpha_{i}},(\alpha_{i}\circ e_{b})\circ e_{a}-(\alpha_{i}\circ e_{a})\circ e_{b},\cdots)\\
 & = & \sum_{i,b<a}(-1)^{a+b}\big(\omega_{k-1}(\cdots\alpha_{i}\circ e_{a},e_{b},\cdots\widehat{e_{a}},\cdots;\cdots\widehat{\alpha_{i}},\cdots)\\
 &  & \quad+\omega_{k-1}(\cdots e_{b},\alpha_{i}\circ e_{a},\cdots\widehat{e_{a}},\cdots;\cdots\widehat{\alpha_{i}},\cdots)\big)\\
 &  & +\sum_{i,a<b}(-1)^{a+b}\omega_{k}(\cdots\widehat{e_{a}},\cdots\widehat{e_{b}},\cdots;\cdots\langle e_{a},\alpha_{i}\circ e_{b}\rangle,\cdots)\\
 & = & 0
\end{eqnarray*}

The proof is finished.\end{proof}
\begin{defn}
\label{Def:generalized standard cohomology CDA}With the notations
above, $(C^{\bullet}(\mathcal{E},\mathcal{H},\mathcal{V}),d)$ is
called $\mathcal{H}$-standard complex of $\mathcal{E}$ with coefficients
in $\mathcal{V}$. The resulting cohomology, denoted by $H^{\bullet}(\mathcal{E},\mathcal{H},\mathcal{V})$,
is called $\mathcal{H}$-standard cohomology of $\mathcal{E}$ with
coefficients in $\mathcal{V}$.
\end{defn}
Let's consider the standard cohomology in lower degrees:

Degree 0:

$H^{0}(\mathcal{E},\mathcal{H},\mathcal{V})$ is the submodule of
$\mathcal{V}$ consisting of all invariants, i.e.
\[
H^{0}(\mathcal{E},\mathcal{H},\mathcal{V})=\{v\in\mathcal{V}|\nabla_{e}v=0,\ \forall e\in\mathcal{E}\}.
\]

Degree 1:

A cocycle $\omega$ in $C^{1}(\mathcal{E},\mathcal{H},\mathcal{V})$
is a map $\omega_{0}:\mathcal{E}\rightarrow\mathcal{V}$ satisfying:
\[
\omega_{0}(e_{1}\circ e_{2})=\nabla_{e_{1}}\omega_{0}(e_{2})-\nabla_{e_{2}}\omega_{0}(e_{1}),\quad\forall e_{1},e_{2}\in\mathcal{E}
\]
and
\[
\omega_{0}(\alpha)=0,\quad\forall\alpha\in\mathcal{H}.
\]

The first equation above tells that $\omega_{0}$ is a derivation
from $\mathcal{E}$ to $\mathcal{V}$, while the second equation tells
that $\omega_{0}$ induces a map from $\mathcal{E}/\mathcal{H}$ to
$\mathcal{V}$.

$\eta\in C^{1}(\mathcal{E},\mathcal{H},\mathcal{V})$ is a coboundary
iff there exists $v\in\mathcal{V}$ such that:
\[
\eta_{0}(e)=\nabla_{e}v,\quad\forall e\in\mathcal{E},
\]
i.e. $\eta_{0}$ is an inner derivation from $\mathcal{E}$ to $\mathcal{V}$.

Thus $H^{1}(\mathcal{E},\mathcal{H},\mathcal{V})$ is the space of
``outer derivations'': \{derivations\}/\{inner derivations\} from
$\mathcal{E}$ to $\mathcal{V}$ which act trivially on $\mathcal{H}$.
Or equivalently, $H^{1}(\mathcal{E},\mathcal{H},\mathcal{V})$ is
the space of outer derivations from $\mathcal{E}/\mathcal{H}$ to
$\mathcal{V}$.

Degree 2:

$\omega=(\omega_{0},\omega_{1})\in C^{2}(\mathcal{E},\mathcal{H},\mathcal{V})$
is a $2$-cocycle iff:
\begin{eqnarray}
\nabla_{e_{1}}\omega_{0}(e_{2},e_{3})-\nabla_{e_{2}}\omega_{0}(e_{1},e_{3})+\nabla_{e_{3}}\omega_{0}(e_{1},e_{2})\nonumber \\
-\omega_{0}(e_{1}\circ e_{2},e_{3})-\omega_{0}(e_{2},e_{1}\circ e_{3})+\omega_{0}(e_{1},e_{2}\circ e_{3}) & = & 0\label{eq:2cocycle0}
\end{eqnarray}
and
\begin{equation}
\nabla_{e}\omega_{1}(\alpha)+\omega_{0}(\alpha,e)+\omega_{1}(\alpha\circ e)=0\label{eq:2cocycle1}
\end{equation}
$\forall e,e_{1},e_{2},e_{3}\in\mathcal{E},\alpha\in\mathcal{H}.$

Equation \ref{eq:2cocycle0} holds iff the bracket on $\bar{\mathcal{E}}\triangleq\mathcal{E}\oplus\mathcal{V}$
defined for any $e_{1},e_{2}\in\mathcal{E}$, $v_{1},v_{2}\in\mathcal{V}$
by:
\[
(e_{1}+v_{1})\bar{\circ}(e_{2}+v_{2})\triangleq e_{1}\circ e_{2}+\big(\nabla_{e_{1}}v_{2}-\nabla_{e_{2}}v_{1}+\omega_{0}(e_{1},e_{2})\big)
\]
is a Leibniz bracket. Furthermore, if Equation \ref{eq:2cocycle1}
also holds, it is easily checked that $(\bar{\mathcal{E}},R,\overline{\langle\cdot,\cdot\rangle},\bar{\partial},\bar{\circ})$
is a Courant-Dorfman algebra, where $\overline{\langle\cdot,\cdot\rangle}$
and $\bar{\partial}$ are defined as:
\begin{eqnarray*}
\overline{\langle e_{1}+v_{1},e_{2}+v_{2}\rangle} & = & \langle e_{1},e_{2}\rangle\\
\bar{\partial}f & = & \partial f-\omega_{1}(\partial f).
\end{eqnarray*}
Actually Equation \ref{eq:2cocycle0} implies that
\[
\bar{\mathcal{H}}\triangleq\{\alpha-\omega_{1}(\alpha)|\alpha\in\mathcal{H}\}
\]
is an ideal of $\bar{\mathcal{E}}$.

In a summation, $2$-cocycles are in 1-1 correspondence with central
extensions of Courant-Dorfman algebras which are split as metric $R$-modules:
\[
0\rightarrow\mathcal{V}\rightarrow\bar{\mathcal{E}}\rightarrow\mathcal{E}\rightarrow0
\]
such that $\bar{\mathcal{H}}$ is an ideal of $\bar{\mathcal{E}}$.

The central extensions determined by $2$-cocycles $\omega_{1},\omega_{2}$
are isomorphic iff $\omega_{1}-\omega_{2}=d\lambda,$ for some $\lambda\in C^{1}(\mathcal{E},\mathcal{H},\mathcal{V})$.

Thus $H^{2}(\mathcal{E},\mathcal{H},\mathcal{V})$ classifies isomorphism
classes of central extensions of Courant-Dorfman algebras which are
split as metric $R$-modules:
\[
0\rightarrow\mathcal{V}\rightarrow\bar{\mathcal{E}}\rightarrow\mathcal{E}\rightarrow0
\]
such that $\bar{\mathcal{H}}$ is an ideal of $\bar{\mathcal{E}}$.

\subsection{For Leibniz algebras}

Assume $L$ is a Leibniz algebra with left center $Z$. There is a
symmetric bilinear product $(\cdot,\cdot):L\otimes L\rightarrow Z$
defined as: 

\[
(e_{1},e_{2})\triangleq e_{1}\circ e_{2}+e_{2}\circ e_{1},\qquad\forall e_{1},e_{2}\in L.
\]

It is easily checked that such defined bilinear product is invariant,
i.e.
\[
\rho(e_{1})(e_{2},e_{3})=(e_{1}\circ e_{2},e_{3})+(e_{2},e_{1}\circ e_{3}),\qquad\forall e_{1},e_{2},e_{3}\in L.
\]

Let $H\supseteq Z$ be an isotropic ideal in $L$. Let $(V,\tau)$
be an $H$-representation of $L$, which is defined as follows:
\begin{defn}
An $H$-trivial left representation (or $H$-representation for short)
of a Leibniz algebra $L$ is a pair $(V,\tau)$, where $V$ is vector
space, and $\tau:L\rightarrow gl(V)$ is a homomorphism of Leibniz
algebras such that:
\[
\tau(h)v=0,\quad\forall h\in H,v\in V.
\]
\end{defn}
\begin{example}
Since 
\[
\rho(h)z=h\circ z=(h,z)=0,\qquad\forall h\in H,z\in Z,
\]
 $(Z,\rho)$ is an $H$-representation.
\end{example}
Denote by $C^{n}(L,H,V)$ the space of all sequences $\omega=(\omega_{0},\cdots,\omega_{[\frac{n}{2}]})$,
where $\omega_{k}$ is a linear map from $(\otimes^{n-2k}L)\otimes(\odot^{k}H)$
to $V$, $\forall k$, and is weakly skew-symmetric in arguments of
$L$ up to $\omega_{k+1}$:
\begin{eqnarray*}
 &  & \omega_{k}(e_{1},\cdots e_{i},e_{i+1,}\cdots e_{n-2k};h_{1},\cdots h_{k})+\omega_{k}(e_{1},\cdots e_{i+1},e_{i,}\cdots e_{n-2k};h_{1},\cdots h_{k})\\
 & = & -\omega_{k+1}(\cdots\widehat{e_{i}},\widehat{e_{i+1}},\cdots;(e_{i},e_{i+1}),\cdots)
\end{eqnarray*}
$\forall e_{j}\in L,h_{l}\in H$.
\begin{thm}
\label{thm: standard complex}$C^{\bullet}(L,H,V)\triangleq\bigoplus_{n}C^{n}(L,H,V)$
is a cochain complex, under the coboundary map $d=d_{0}+\delta+d^{\prime}$,
where $d_{0}$ is the coboundary map (Equation \ref{eq: Leibniz coboundary})
corresponding to the Leibniz cohomology of $L$ with coefficients
in $(V,\tau,-\tau)$, and $\delta,d^{\prime}$ are defined for any
$\omega\in C^{n}(L,H,V)$ respectively by:

\begin{eqnarray*}
(\delta\omega)_{k}(e_{1},\cdots,e_{n+1-2k};h_{1},\cdots h_{k}) & \triangleq & \sum_{j}\omega_{k-1}(\alpha_{j},e_{1},\cdots,e_{n+1-2k};h_{1},\cdots\widehat{h_{j}},\cdots h_{k})\\
(d^{\prime}\omega)_{k}(e_{1},\cdots e_{n+1-2k};h_{1},\cdots h_{k}) & \triangleq & \sum_{a,j}(-1)^{a+1}\omega_{k}(\cdots\widehat{e_{a}},\cdots;\cdots\widehat{h_{j}},h_{j}\circ e_{a},\cdots)
\end{eqnarray*}
$\forall e_{a}\in L,h_{i}\in H.$ ($(\delta\omega)_{0}$ is defined
to be $0$.)\end{thm}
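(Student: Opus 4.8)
The plan is to establish the theorem by the same two-step scheme used for Theorem~\ref{thm:standard complex CDA}: first show that $C^{\bullet}(L,H,V)$ is closed under $d=d_{0}+\delta+d'$, then show $d^{2}=0$. The whole argument is a direct transcription of the Courant--Dorfman proof, streamlined by two structural simplifications. Since $L$ carries no commutative base ring, a cochain is constrained only by weak skew-symmetry in its $L$-arguments; the weak $R$-linearity and the $R$-linearity in the $H$-slots (conditions (2) and (3) of the Courant--Dorfman cochains) have no counterpart here, so closedness reduces to a single check. Moreover the element $\partial\langle e_{i},e_{i+1}\rangle$ that drives the Courant--Dorfman computations is replaced throughout by the symmetric product $(e_{i},e_{i+1})=e_{i}\circ e_{i+1}+e_{i+1}\circ e_{i}$, which lands in $Z\subseteq H$ and is therefore a legitimate $H$-argument of $\omega_{k+1}$; likewise the insertion $\rho^{*}(\alpha_{i})$ of the original construction is replaced by direct insertion of $h_{i}\in H\subseteq L$, as in the definition of $\delta$.

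For closedness I would skew-symmetrize $(d_{0}\omega)_{k}$, $(\delta\omega)_{k}$ and $(d'\omega)_{k}$ separately, obtaining three identities parallel to \eqref{eq:d0 weak skewsym} and \eqref{eq:d prime weak skewsym}. The $d'$-contribution is clean; the $d_{0}$- and $\delta$-contributions each produce, beyond the expected $(d_{\bullet}\omega)_{k+1}$-term carrying the inserted argument $(e_{i},e_{i+1})$, a single stray term $\pm\,\omega_{k}((e_{i},e_{i+1}),\ldots)$, and these opposite-signed strays cancel. Summing the three yields weak skew-symmetry of $d\omega$. The facts used are that $H$ is a two-sided ideal, so that $h\circ e$ and $e\circ h$ lie in $H$ and all the arguments inserted by $\delta$ and $d'$ are defined, and that $\tau(h)=0$ for $h\in H$, which kills the $d_{0}$-terms that would place an $H$-element in the acting slot.

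For $d^{2}=0$ I would split $d^{2}=d_{0}^{2}+\delta^{2}+(d_{0}\delta+\delta d_{0})+(d_{0}d'+d'd_{0})+(\delta d'+d'\delta)+d'^{2}$. Here $d_{0}^{2}=0$ is the standard fact that $(V,\tau,-\tau)$ is a genuine $L$-module, its defining relations \eqref{eq:Leibniz module} being exactly that $\tau$ is a Leibniz homomorphism with $r=-\tau$, and $d_{0}$ its Leibniz coboundary. The term $\delta^{2}$ vanishes because transferring two $H$-entries $h_{i},h_{j}$ to the front and applying weak skew-symmetry produces the symmetric product $(h_{i},h_{j})$, which is zero by isotropy of $H$. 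The four remaining cross-terms I would compute in closed form, reproducing the displayed Courant--Dorfman outputs: $d_{0}\delta+\delta d_{0}$ and $\delta d'+d'\delta$ give $\omega_{k-1}$-terms carrying an element $h\circ e_{a}\in H$ in an $L$-slot, while $d_{0}d'+d'd_{0}$ and $d'^{2}$ give $\omega_{k}$-terms carrying, respectively, $h_{j}\circ(e_{a}\circ e_{b})$ and $(h_{j}\circ e_{b})\circ e_{a}-(h_{j}\circ e_{a})\circ e_{b}$ in an $H$-slot.

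The final cancellation, which I expect to be the main obstacle, then has two moves. First, the Leibniz identity collapses $h_{j}\circ(e_{a}\circ e_{b})+(h_{j}\circ e_{b})\circ e_{a}-(h_{j}\circ e_{a})\circ e_{b}$ into the symmetric product $(e_{a},\,h_{j}\circ e_{b})\in Z$, merging the two $\omega_{k}$-families into one. Second, weak skew-symmetry pairs the two $\omega_{k-1}$-families, the element $h_{j}\circ e_{a}$ appearing in adjacent slots in the two orders, and rewrites their sum as an $\omega_{k}$-term carrying $(h_{j}\circ e_{a},\,e_{b})$. These two $\omega_{k}$-families annihilate one another via the invariance identity $(h\circ e_{2},e_{3})=-(e_{2},h\circ e_{3})$ for $h\in H$, which expresses that $\rho(h)$ kills $Z$ and follows from $Z\subseteq H$ together with the isotropy of $H$ (giving $(h,z)=0$, hence $h\circ z=0$, for $z\in Z$). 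The delicate part is the sign accounting across the relabelling $a\leftrightarrow b$, and keeping track of which inserted elements lie in $H$ (needed for the ideal property) versus in $Z$ (needed for the symmetric product), so that every intermediate cochain is well defined.
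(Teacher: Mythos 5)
Your proposal is correct and is essentially the paper's own proof: the paper simply remarks that the argument is ``quite similar to that of Theorem \ref{thm:standard complex CDA}'' and omits it, and your transcription of that Courant--Dorfman proof (with $\partial\langle e_{i},e_{i+1}\rangle$ replaced by $(e_{i},e_{i+1})\in Z\subseteq H$, $\rho^{*}(\alpha)$ replaced by direct insertion of $h\in H\subseteq L$, and the $R$-linearity conditions dropped) is exactly the intended adaptation. In particular, your six-part decomposition of $d^{2}$, the vanishing of $\delta^{2}$ by isotropy of $H$, and the final cancellation via the Leibniz identity together with the invariance $(h\circ e_{2},e_{3})=-(e_{2},h\circ e_{3})$ reproduce, step for step, the computation given in the paper's proof of Theorem \ref{thm:standard complex CDA}.
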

\begin{proof}
The proof of this theorem is quite similar to that of Theorem \ref{thm:standard complex CDA},
so we omit it here.\end{proof}
\begin{defn}
\label{Def:standard cohomology Leibniz} $(C^{\bullet}(L,H,V),d)$
is called the $H$-standard complex of $L$ with coefficients in $V$.
The resulting cohomology, denoted by $H^{\bullet}(L,H,V)$ is called
the $H$-standard cohomology of $L$ with coefficients in $V$.
\end{defn}
The $H$-standard cohomology of $L$ in degree $0,1,2$ have similar
interpretations to the case of Courant-Dorfman algebras:

$H^{0}(L,H,V)$ is the submodule of $V$ consisting of all invariants.

$H^{1}(L,H,V)$ is the space of outer derivations from $L$ to $V$
acting trivially on $H$.

$H^{2}(L,H,V)$ classfies the equivalence classes of abelian extensions
of $L$ by $V$:
\[
0\rightarrow V\rightarrow\bar{L}\rightarrow L\rightarrow0
\]

such that $\bar{H}$ is an ideal of $\bar{L}$.

\section{Isomorphism theorem for Courant-Dorfman algebra}

In this section, we present one of our main results in this paper,
which is a generalization of a theorem of Ginot-Grutzmann (conjectured
by Stienon-Xu) for transitive Courant algebroids.

Let $\mathcal{E},\mathcal{H},\mathcal{V}$ be as described in the
last section. Since $\mathcal{H}$ is an ideal in $\mathcal{E}$ containing
$\partial R$, it is easily checked that $\mathcal{E}/\mathcal{H}$
is a Lie-Rinehart algebra with the induced anchor map(still denoted
by $\rho$):
\[
\rho([e])f\triangleq\rho(e)f,\qquad\forall e\in\mathcal{E},f\in R,
\]
and induced bracket:
\[
[e_{1}]\circ[e_{2}]\triangleq[e_{1}\circ e_{2}],\qquad\forall e_{1},e_{2}\in\mathcal{E}.
\]
Moreover, $\mathcal{V}$ becomes a representation of $\mathcal{E}/\mathcal{H}$
with the induced action (still denoted by $\tau$):
\[
\tau([e])v\triangleq\tau(e)v,\qquad\forall e\in\mathcal{E},v\in\mathcal{V}.
\]
As a result, we have the Chevalley-Eilenberg complex $(C_{CE}^{\bullet}(\mathcal{E}/\mathcal{H},\mathcal{V}),d_{CE})$
of $\mathcal{E}/\mathcal{H}$ with coefficients in $\mathcal{V}$,
and the corresponding cohomology $H_{CE}^{\bullet}(\mathcal{E}/\mathcal{H},\mathcal{V})$.
\begin{thm}
\label{thm:isomorphism CDA} Given a Courant-Dorfman algebra $(\mathcal{E},R,\langle\cdot,\cdot\rangle,\partial,\circ)$,
an $R$-submodule $\mathcal{H}\supseteq\partial R$ which is an isotropic
ideal of $\mathcal{E}$, and an $\mathcal{H}$-representation $(\mathcal{V},\nabla)$.
If the quotient module $\mathcal{E}/\mathcal{H}$ is projective, then
we have:
\[
H^{\bullet}(\mathcal{E},\mathcal{H},\mathcal{V})\cong H_{CE}^{\bullet}(\mathcal{E}/\mathcal{H},\mathcal{V}).
\]

\end{thm}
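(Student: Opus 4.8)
The plan is to exhibit a map $\iota\colon C_{CE}^\bullet(\mathcal{E}/\mathcal{H},\mathcal{V})\to C^\bullet(\mathcal{E},\mathcal{H},\mathcal{V})$ realizing the isomorphism, and to prove it is a quasi-isomorphism using the projectivity hypothesis. First I would define $\iota$ by pulling a Chevalley--Eilenberg cochain $\phi\in\Hom_R(\wedge^n(\mathcal{E}/\mathcal{H}),\mathcal{V})$ back into the $k=0$ slot: set $(\iota\phi)_0(e_1,\dots,e_n)=\phi([e_1],\dots,[e_n])$ and $(\iota\phi)_k=0$ for $k\geq 1$. The three defining constraints of a standard cochain then hold trivially, because $\phi$ is $R$-linear and alternating and the higher components vanish. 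Checking that $\iota$ is a chain map is short: $\delta(\iota\phi)=0$ since its only possibly nonzero component feeds an argument $\alpha\in\mathcal{H}$ into $(\iota\phi)_0$, where it becomes $\phi([\alpha],\dots)=0$; $d'(\iota\phi)=0$ since $d'$ only sees the vanishing higher components; and $d_0(\iota\phi)$ reduces to the Leibniz coboundary of the pullback, which descends to $\mathcal{E}/\mathcal{H}$ and agrees with $d_{CE}\phi$ because $\nabla_\alpha=0$ for $\alpha\in\mathcal{H}$, because $\mathcal{H}$ is an ideal (so bracketed arguments stay in $\mathcal{H}$), and because $\partial R\subseteq\mathcal{H}$ forces the induced bracket $[e_1]\circ[e_2]=[e_1\circ e_2]$ to be skew-symmetric. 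Thus $\iota$ is an injective morphism of complexes, and it remains to show it induces an isomorphism on cohomology.

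To bring in the hypothesis, I would invoke projectivity of $\mathcal{E}/\mathcal{H}$ to split the sequence $0\to\mathcal{H}\to\mathcal{E}\xrightarrow{\mathrm{pr}}\mathcal{E}/\mathcal{H}\to 0$ by an $R$-linear section $s$, giving an $R$-module decomposition $\mathcal{E}\cong\mathcal{H}\oplus s(\mathcal{E}/\mathcal{H})$. I would then filter $C^\bullet(\mathcal{E},\mathcal{H},\mathcal{V})$ by the number $k$ of isotropic ($\mathcal{H}$) arguments. Inspecting the three pieces of $d$, the Leibniz part $d_0$ and $d'$ preserve $k$ while $\delta$ raises $k$ by one, so this is a genuine filtration and yields a spectral sequence whose $E_0$-differential is induced by $d_0+d'$ and whose next differential is induced by $\delta$. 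Isotropy of $\mathcal{H}$ is what makes the whole scheme triangular: the weak-skew and weak-linearity corrections involve $\partial\langle e_a,e_b\rangle\in\mathcal{H}$ and the scalar $\langle e_a,e_b\rangle$, so they land strictly one step up in the filtration and do not obstruct passage to the associated graded.

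Next I would compute the pages. Using $s$, the argument modules $\otimes^p\mathcal{E}$ split into a horizontal ($s(\mathcal{E}/\mathcal{H})$) part and an $\mathcal{H}$-part, and projectivity lets $\Hom(-,\mathcal{V})$ convert this into an honest direct-sum decomposition of cochains compatible with the differentials. On the associated graded the $\delta$-differential acts as a Koszul-type contraction in the symmetric algebra $\odot^\bullet\mathcal{H}$ of the isotropic arguments, which is acyclic in positive $\mathcal{H}$-degree; hence all columns with $k>0$ die and the surviving $k=0$ row is exactly $C_{CE}^\bullet(\mathcal{E}/\mathcal{H},\mathcal{V})$. The spectral sequence therefore collapses onto the CE complex, and since the surviving classes are represented precisely by cochains in the image of $\iota$, the induced map is the desired isomorphism $H^\bullet(\mathcal{E},\mathcal{H},\mathcal{V})\cong H_{CE}^\bullet(\mathcal{E}/\mathcal{H},\mathcal{V})$.

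The hard part will be the acyclicity of this auxiliary direction, i.e.\ producing the explicit contracting homotopy (or proving the degeneration) that annihilates every component with $k>0$ together with the non-alternating, non-horizontal part of $\omega_0$. The genuine difficulty is bookkeeping: because standard cochains satisfy \emph{weak} rather than strict symmetry and $R$-linearity, the graded pieces are not a naive direct sum, so one must verify at each stage that the homotopy respects the coupled constraints linking $\omega_k$ to $\omega_{k+1}$. Here isotropy of $\mathcal{H}$ keeps the correction terms strictly lower order so the contraction closes up, and projectivity of $\mathcal{E}/\mathcal{H}$ is used twice over: once for the $R$-linear splitting that defines the homotopy, and once to guarantee that forming $\mathcal{V}$-valued cochains is compatible with the exterior/symmetric decomposition so that the associated graded really is the CE complex. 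I would expect the sign and combinatorial verifications at this step to be the most laborious, paralleling Ginot--Grutzmann's homotopy argument in the transitive Courant-algebroid case.
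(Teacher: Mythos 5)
Your first paragraph is sound, but it only reproduces the easy half of the paper's argument: your map $\iota$ is exactly the paper's identification (Lemma \ref{lem: CE nv}) of $C_{CE}^{\bullet}(\mathcal{E}/\mathcal{H},\mathcal{V})$ with the subcomplex $C_{nv}^{\bullet}(\mathcal{E},\mathcal{H},\mathcal{V})$ of cochains $\omega$ with $\omega_{k}=0$ for $k\geq1$ and $\omega_{0}(\alpha,\cdots)=0$ for all $\alpha\in\mathcal{H}$. The gap is everything after that. In the paper, the entire technical content of the theorem is Lemma \ref{lem:lambda}: using the splitting $\mathcal{E}=\mathcal{H}\oplus\mathcal{X}$ furnished by projectivity, one constructs inductively, for every $\omega$ with $(d\omega)_{k}=0$ for all $k\geq1$, a cochain $\lambda$ whose components satisfy five coupled ``Lambda Conditions'', so that $\omega-d\lambda\in C_{nv}^{\bullet}$. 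Applying this to cocycles gives surjectivity on cohomology, and applying it to a primitive $\omega$ of a naive coboundary (the lemma only needs $(d\omega)_{k}=0$ for $k\geq1$, not $d\omega=0$, which is what makes the injectivity step work) gives injectivity. Your proposal defers precisely this construction --- ``producing the explicit contracting homotopy \dots is the hard part'' --- so what you have written is a plan for a proof, with the proof itself missing.

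Moreover, the spectral-sequence mechanism you sketch is internally inconsistent. With your filtration $F^{p}=\{\omega:\omega_{k}=0,\ k<p\}$ by the number of $\mathcal{H}$-arguments, the $E_{0}$-differential is induced by $d_{0}+d'$ and $\delta$ only appears as the differential $d_{1}$ on the $E_{1}$-page --- you state this yourself, but then argue as if $\delta$ were the $E_{0}$-differential, claiming that Koszul-type acyclicity of $\delta$ ``on the associated graded'' kills the columns $k>0$. What collapse actually requires is that the rows $(E_{1}^{\bullet,q},d_{1}=\delta)$, i.e.\ the $\delta$-cohomology of $H^{\bullet}(\mathrm{gr},d_{0}+d')$, be exact in positive filtration degree and reproduce $H_{CE}^{\bullet}$ at $p=0$; this forces you to compute $E_{1}$ first, which is no easier than the paper's direct construction. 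A second obstruction, which you acknowledge but do not resolve, is that $\mathrm{gr}^{p}$ is not the naive space of maps $(\otimes^{n-2p}\mathcal{E})\otimes(\odot^{p}\mathcal{H})\to\mathcal{V}$: for $\omega\in F^{p}$ the weak skew-symmetry and weak $R$-linearity identities at level $p-1$ degenerate into genuine vanishing conditions on $\omega_{p}$ (for instance, $\omega_{p}$ must vanish whenever an $\mathcal{H}$-slot is filled by $\partial\langle e_{a},e_{b}\rangle$), so the complex you propose to contract is not the symmetric-algebra Koszul complex. Repairing these two points amounts to redoing the inductive construction of the paper's Lemma \ref{lem:lambda}, which is where essentially all of the work lies.
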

Before proof of this theorem, we prove the following two lemmas first.
\begin{lem}
\label{lem: CE nv}$(C_{CE}^{\bullet}(\mathcal{E}/\mathcal{H},\mathcal{V}),d_{CE})$
is isomorphic to the following subcomplex of $(C^{\bullet}(\mathcal{E},\mathcal{H},\mathcal{V}),d)$:
\[
C_{nv}^{\bullet}(\mathcal{E},\mathcal{H},\mathcal{V})\triangleq\{\omega\in C^{\bullet}(\mathcal{E},\mathcal{H},\mathcal{V})|\omega_{k}=0,\ \forall k\geq1,\ \iota_{\alpha}\omega_{0}=0,\ \forall\alpha\in\mathcal{H}\}
\]
\end{lem}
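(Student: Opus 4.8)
The plan is to exhibit an explicit cochain isomorphism $\Phi\colon C_{nv}^{\bullet}(\mathcal{E},\mathcal{H},\mathcal{V})\to C_{CE}^{\bullet}(\mathcal{E}/\mathcal{H},\mathcal{V})$ and to verify, in order, that (i) it is a degree-wise linear bijection, (ii) $C_{nv}^{\bullet}$ is genuinely closed under $d$ so that the statement makes sense, and (iii) $\Phi$ intertwines $d$ with $d_{CE}$. On an element $\omega=(\omega_0,0,\dots,0)\in C_{nv}^n$ I would set $(\Phi\omega)([e_1],\dots,[e_n])\triangleq\omega_0(e_1,\dots,e_n)$.

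For (i): since $\omega_1=0$, the weak skew-symmetry and weak $R$-linearity conditions defining $C^n(\mathcal{E},\mathcal{H},\mathcal{V})$ collapse to \emph{honest} skew-symmetry and $R$-multilinearity of $\omega_0$; together with the defining condition $\iota_\alpha\omega_0=0$ for $\alpha\in\mathcal{H}$, this shows that $\omega_0$ factors through the projection $\mathcal{E}\to\mathcal{E}/\mathcal{H}$ as an alternating $R$-multilinear map, i.e. an element of $C_{CE}^n(\mathcal{E}/\mathcal{H},\mathcal{V})=\Hom_R(\wedge^n(\mathcal{E}/\mathcal{H}),\mathcal{V})$. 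Conversely any CE cochain pulls back along the projection to a cochain $(\omega_0,0,\dots)$ lying in $C_{nv}^n$, and the two assignments are mutually inverse, so $\Phi$ is an isomorphism of $R$-modules in each degree.

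For (ii): I compute $(d\omega)_k$ for $\omega\in C_{nv}^n$. Because $d_0$ and $d^{\prime}$ preserve the number $k$ of $\mathcal{H}$-arguments and each involves only $\omega_k$, they contribute zero to $(d\omega)_k$ for every $k\ge 1$. The operator $\delta$ raises $k$ by one and feeds an $\mathcal{H}$-element into the first $\mathcal{E}$-slot of $\omega_{k-1}$; for $k\ge 2$ this vanishes since $\omega_{k-1}=0$, and for $k=1$ it equals $\omega_0(\alpha_1,e_1,\dots)$, which vanishes precisely because $\iota_\alpha\omega_0=0$. Hence $(d\omega)_k=0$ for all $k\ge 1$, and $(d\omega)_0=(d_0\omega)_0$. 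It remains to check $\iota_\alpha(d_0\omega)_0=0$: inspecting the Leibniz coboundary (Equation \ref{eq: Leibniz coboundary}) with one argument $e_c\in\mathcal{H}$, every action term either kills $\omega_0$ (another argument still lies in $\mathcal{H}$) or carries the factor $\nabla_{e_c}=0$, and every bracket term $e_a\circ e_b$ either retains $e_c$ as an argument of $\omega_0$ or, when $c\in\{a,b\}$, produces $e_a\circ e_b\in\mathcal{H}$ since $\mathcal{H}$ is an ideal; thus all terms vanish. So $d\omega\in C_{nv}^{n+1}$ and $d$ restricts to $d_0$ on the subcomplex.

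For (iii): by (ii) the identity $\Phi\circ d=d_{CE}\circ\Phi$ reduces to the classical comparison between the Leibniz coboundary with \emph{symmetric} coefficients $(\mathcal{V},\nabla,-\nabla)$ and the Chevalley-Eilenberg coboundary of the quotient Lie-Rinehart algebra, restricted to alternating cochains. Here the symmetric choice $r=-\nabla$ is essential: the last-argument term $(-1)^{n+1}r_{e_{n+1}}\omega_0$ has sign $(-1)^n$ and thus combines with the sum $\sum_a(-1)^{a+1}\nabla_{e_a}\omega_0(\dots\widehat{e_a}\dots)$ to produce exactly the CE action sum over all $n+1$ arguments; meanwhile, on alternating cochains the inserted brackets $e_a\circ e_b$ can be reordered into the standard CE position, and the anchor $\rho$ and action $\nabla$ descend to those of $\mathcal{E}/\mathcal{H}$ by the constructions recalled at the start of this section. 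I expect this sign-and-reordering bookkeeping to be the main obstacle; everything preceding it is formal.
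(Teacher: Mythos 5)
Your proposal is correct and follows essentially the same route as the paper: the paper's proof consists of exactly the maps you call $\Phi$ and its inverse (restriction along, and pullback along, the projection $\mathcal{E}\to\mathcal{E}/\mathcal{H}$), with the subcomplex property and the cochain-map identity left as ``easily checked.'' Your write-up simply supplies those checks -- the collapse of weak skew-symmetry and weak $R$-linearity when $\omega_{1}=0$, the vanishing of $(d\omega)_{k}$ for $k\geq1$ via $\nabla_{\alpha}=0$ and the ideal property of $\mathcal{H}$, and the sign bookkeeping coming from the symmetric extension $(\mathcal{V},\nabla,-\nabla)$ -- all of which are accurate.
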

\begin{proof}
Obviously $C_{nv}^{\bullet}(\mathcal{E},\mathcal{H},\mathcal{V})$
is a subcomplex of $C^{\bullet}(\mathcal{E},\mathcal{H},\mathcal{V})$.

And it is easily checked that the following two maps $\varphi,\phi$
are well-defined cochain maps and invertible to each other:
\begin{eqnarray*}
\varphi:C_{nv}^{\bullet}(\mathcal{E},\mathcal{H},\mathcal{V}) & \rightarrow & C_{CE}^{\bullet}(\mathcal{E}/\mathcal{H},\mathcal{V})\\
\varphi(\eta)([e_{1}],\cdots[e_{n}]) & \triangleq & \eta(e_{1},\cdots e_{n})\quad\forall\eta\in C_{nv}^{n}(\mathcal{E},\mathcal{H},\mathcal{V}),e_{a}\in\mathcal{E},
\end{eqnarray*}
and
\begin{eqnarray*}
\phi:C_{CE}^{\bullet}(\mathcal{E}/\mathcal{H},\mathcal{V}) & \rightarrow & C_{nv}^{\bullet}(\mathcal{E},\mathcal{H},\mathcal{V})\\
\phi(\zeta)(e_{1},\cdots e_{n}) & \triangleq & \zeta([e_{1}],\cdots[e_{n}])\quad\forall\zeta\in C_{CE}^{\bullet}(\mathcal{E}/\mathcal{H},\mathcal{V}),e_{a}\in\mathcal{E}.
\end{eqnarray*}
\end{proof}
\begin{lem}
\label{lem:lambda}Given any $\omega\in C^{n}(\mathcal{E},\mathcal{H},\mathcal{V})$,
if $(d\omega)_{k}=0,\ \forall k\geq1$, then there exists $\eta\in C_{nv}^{n}(\mathcal{E},\mathcal{H},\mathcal{V})$
and $\lambda\in C^{n-1}(\mathcal{E},\mathcal{H},\mathcal{V})$ such
that $\omega=\eta+d\lambda$.\end{lem}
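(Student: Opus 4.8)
The plan is to use the projectivity hypothesis to build a contracting homotopy in the ``$\mathcal{H}$-degree'' direction and then strip off the components $\omega_k$ with $k\geq 1$ one degree at a time. Since $\mathcal{E}/\mathcal{H}$ is projective, the sequence $0\to\mathcal{H}\to\mathcal{E}\to\mathcal{E}/\mathcal{H}\to 0$ splits over $R$, so I fix an $R$-linear idempotent $p\colon\mathcal{E}\to\mathcal{E}$ with $\img p=\mathcal{H}$ and $p|_{\mathcal{H}}=\id$. This $p$ is the only device that converts an $\mathcal{E}$-argument into an $\mathcal{H}$-argument, and the whole construction rests on it; because the ground field is $\mathbb{R}$ or $\mathbb{C}$ I may freely divide by positive integers, which the normalisations below require. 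Throughout, the fact that $\mathcal{H}$ is an isotropic ideal (so $\mathcal{E}\circ\mathcal{H}\subseteq\mathcal{H}$ and $\langle\mathcal{H},\mathcal{H}\rangle=0$) is what keeps every formula inside the complex.

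I argue by decreasing induction on the top $\mathcal{H}$-degree $m=\max\{k:\omega_k\neq 0\}$. In the base case $m=0$ the cochain is $\omega=(\omega_0)$, and the hypothesis $(d\omega)_1=0$ collapses (only the $\delta$-part of $d$ survives) to $\omega_0(\alpha,e_1,\cdots,e_{n-1})=0$ for all $\alpha\in\mathcal{H}$; since a top component is genuinely skew-symmetric in its $\mathcal{E}$-slots --- its weak skew-symmetry is ``up to $\omega_1=0$'' --- this forces $\iota_\alpha\omega_0=0$ in every slot, so $\omega\in C_{nv}^n(\mathcal{E},\mathcal{H},\mathcal{V})$ and I take $\eta=\omega$, $\lambda=0$. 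For the inductive step $m\geq1$ I seek $\lambda\in C^{n-1}(\mathcal{E},\mathcal{H},\mathcal{V})$, supported in $\mathcal{H}$-degrees $\leq m-1$, with $(d\lambda)_m=\omega_m$ and $(d\lambda)_k=0$ for $k>m$. Because such a $\lambda$ vanishes in degrees $\geq m$, the parts $d_0$ and $d'$ (which preserve the $\mathcal{H}$-degree) contribute nothing to $(d\lambda)_k$ for $k\geq m$, so $(d\lambda)_k=(\delta\lambda)_k$ there and the only equation to solve is $(\delta\lambda)_m=\omega_m$, which involves $\lambda_{m-1}$ alone. Granting this, $\omega-d\lambda$ has top $\mathcal{H}$-degree $<m$ and, since $d^2=0$, still satisfies $(d(\omega-d\lambda))_k=(d\omega)_k=0$ for all $k\geq1$, so the induction proceeds and terminates at the base case.

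To produce $\lambda_{m-1}$ I contract $\omega_m$ with $p$ in an antisymmetrised fashion,
\[
\lambda_{m-1}(e_1,\cdots,e_{n+1-2m};\alpha_1,\cdots,\alpha_{m-1})\ \propto\ \sum_{a}(-1)^{a-1}\,\mu\bigl(e_1,\cdots,\widehat{e_a},\cdots;\,p(e_a),\alpha_1,\cdots,\alpha_{m-1}\bigr),
\]
where $\mu$ is an $R$-linear adjustment of $\omega_m$ to be determined. Applying $\delta$ and using $p|_{\mathcal{H}}=\id$ together with the symmetry of the $\mathcal{H}$-slots, the terms in which the promoted $\mathcal{H}$-element is re-extracted reassemble into $m\cdot\mu$, while the remaining cross-terms --- in which some $p(e_b)$ has been fed back into an $\mathcal{E}$-slot --- are exactly those governed by the cocycle relation $(\delta\omega)_{m+1}=(d\omega)_{m+1}=0$ and by the isotropy $\langle\mathcal{H},\mathcal{H}\rangle=0$. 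The net effect is that $(\delta\lambda)_m$ equals a fixed $R$-linear operator applied to $\mu$ whose leading term is multiplication by the $\mathcal{H}$-degree $m$; since $m\geq1$ is invertible in the ground field, this operator is invertible, and choosing $\mu$ to be its inverse applied to $\omega_m$ gives $(\delta\lambda)_m=\omega_m$ on the nose.

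The main obstacle is the well-definedness and validity of $\lambda$: the insertion singles out arguments, and the weak skew-symmetry relations force the lower components $\lambda_{m-2},\cdots$ to be nonzero and compatible, so it is not enough to write down $\lambda_{m-1}$ by hand. The clean remedy is to define all of $\lambda$ simultaneously as $h\omega$ for a single homotopy operator $h$ --- the antisymmetrised $p$-insertion applied uniformly to every $\omega_{k+1}$ --- so that the three defining axioms of $C^{n-1}(\mathcal{E},\mathcal{H},\mathcal{V})$ for $h\omega$ descend directly from those for $\omega$, with the $\partial\langle\cdot,\cdot\rangle$-corrections matching precisely because $\mathcal{H}$ is isotropic. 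Verifying that $h$ indeed lands in the complex, and that the homotopy identity $dh+hd=(\mathcal{H}\text{-degree})\cdot\id+(\text{lower }\mathcal{H}\text{-degree corrections})$ holds once the $d_0$- and $d'$-cross-terms are collected, is the longest and most delicate part of the argument; combined with $(d\omega)_k=0$ for $k\geq1$ (so that $h(d\omega)$ adds nothing in the top degree) it yields $\omega=\eta+d\lambda$ with $\eta\in C_{nv}^n(\mathcal{E},\mathcal{H},\mathcal{V})$, as required.
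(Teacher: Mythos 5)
Your skeleton is sound, and it is in essence the paper's own strategy: the splitting $\mathcal{E}=\mathcal{H}\oplus\mathcal{X}$ supplied by projectivity (your idempotent $p$), the antisymmetrised promotion of the $\mathcal{H}$-components of $\mathcal{E}$-arguments into the symmetric slots, the observation that killing the components of $\mathcal{H}$-degree $\geq 1$ reduces to solving $(\delta\lambda)_m=\omega_m$, and the correct base case. But both places where you compress the argument are exactly where the lemma's content lies, and both have genuine gaps. First, the invertibility step is a non sequitur: an operator of the form $m\cdot\id+T$ is not invertible merely because $m\geq 1$ is invertible in the ground field. What is actually true (and what the paper's computation establishes) is that, after restricting to pure tensors whose $\mathcal{E}$-arguments lie in $\mathcal{H}$ or in $\mathcal{X}$ (``regular permutations'') and invoking the cocycle identity $(\delta\omega)_{m+1}=0$ together with isotropy of $\mathcal{H}$, the relevant operator acts as multiplication by a positive integer that depends on the number $l$ of arguments lying in $\mathcal{H}$, not only on the $\mathcal{H}$-degree; this is why the normalisation in the paper's defining formula \ref{eq:1-1} is $\tfrac{1}{k+l-1}$ rather than a single constant per degree, and no uniform scalar (or formal operator inverse) can replace it. Moreover, the cocycle relation you use to tame the cross-terms holds for $\omega_m$, not for an abstract $\mu$ obtained by inverting an operator, so your reduction of $(\delta\lambda)_m$ to ``$m\cdot\mu$ plus controlled cross-terms'' is circular as stated.

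Second, the completion of $\lambda_{m-1}$ to an honest cochain of $C^{n-1}(\mathcal{E},\mathcal{H},\mathcal{V})$, which you rightly identify as the main obstacle, is not resolved by your proposed remedy. A single uniform antisymmetrised $p$-insertion $h$ applied to every $\omega_{k+1}$ does not satisfy the three axioms: checking weak skew-symmetry and weak $R$-linearity of $(h\omega)_{k-1}$ up to $(h\omega)_k$ produces exactly the terms recorded in Equations \ref{eq:d0 weak skewsym}--\ref{eq:d prime weak R linear}, i.e. the $d_{0}$- and $d^{\prime}$-images of the higher components, and these are not available unless the construction is recursive. This is precisely why the paper defines $\lambda_{k-1}$ by promoting $\omega_{k}-d_{0}\lambda_{k}-d^{\prime}\lambda_{k}$ (not $\omega_k$ alone), with the $l$-dependent normalisation, and why verifying the five Lambda Conditions occupies the bulk of its Step 3. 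Your ``homotopy identity'' $dh+hd=(\mathcal{H}\text{-degree})\cdot\id+(\text{corrections})$ is that verification; asserting that it holds ``once the cross-terms are collected,'' while deferring it as the longest and most delicate part, leaves the lemma unproved. So the proposal names the right ingredients but does not close either of the two steps on which the proof actually turns.
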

\begin{proof}
Since the quotient $\mathcal{E}/\mathcal{H}$ is a projective module,
there exists an $R$-module decomposition: $\mathcal{E}=\mathcal{H}\oplus\mathcal{X}$.
We will give an inductive construction of $\lambda$ and $\beta$.
The construction depends on the decomposition, but the cohomology
class of $\beta$ doesn't depend on the decomposition. 

Suppose $n=2m\ or\ 2m-1$, we will define $\lambda_{m-1},\lambda_{m-2},\cdots,\lambda_{0}$
one by one, so that each $\lambda_{p}:\ \otimes^{n-1-2p}\mathcal{E}\otimes\odot^{p}\mathcal{H}\rightarrow\mathcal{V},\ 0\leq p\leq m-1$
satisfies the following conditions, which we call ``Lambda Conditions'':

1). $\lambda_{p}$ is weakly skew-symmetric in arguments of $\mathcal{E}$
up to $\lambda_{p+1}$,

2). $\lambda_{p}$ is weakly $R$-linear in arguments of $\mathcal{E}$
up to $\lambda_{p+1}$,

3). $\lambda_{p}$ is $R$-linear in arguments of $R$,

4). $\omega_{p+1}=(d\lambda)_{p+1}$,

5). $\sum_{i}(\omega_{p}-d_{0}\lambda_{p}-d^{\prime}\lambda_{p})(\alpha_{i},e_{1},\cdots e_{n-1-2p};\alpha_{1},\cdots\widehat{\alpha_{i}},\cdots\alpha_{p+1})=0$,
$\forall\alpha_{j}\in\mathcal{H},e_{a}\in\mathcal{E}$.

The construction of $\lambda_{m-1},\lambda_{m-2},\cdots,\lambda_{0}$
is done in the following four steps.

Step 1:

Construction of $\lambda_{m-1}$:

When $n=2m-1$ is odd, let 
\[
\lambda_{m-1}(\alpha_{1},\cdots\alpha_{m-1})=0,\qquad\forall\alpha_{j}\in\mathcal{H}.
\]

When $n=2m$ is even, let 
\[
\lambda_{m-1}(\beta;\alpha_{1},\cdots\alpha_{m-1})=\frac{1}{m}\omega_{m}(\beta,\alpha_{1},\cdots\alpha_{m-1}),\ \forall\beta,\alpha_{j}\in\mathcal{H}
\]
 and 
\[
\lambda_{m-1}(x;\alpha_{1},\cdots\alpha_{m-1})=0,\ \forall x\in\mathcal{X},\alpha_{j}\in\mathcal{H}.
\]

It is obvious that $\lambda_{m-1}$ defined above satisfies Lambda
Conditions 1) - 4). So we only need to prove Lambda Condition 5):

When $n=2m-1$, 
\[
\sum_{i}(\omega_{m-1}-d_{0}\lambda_{m-1}-d^{\prime}\lambda_{m-1})(\alpha_{i};\alpha_{1},\cdots\widehat{\alpha_{i}},\cdots\alpha_{m})=(d\omega)_{m}(\alpha_{1},\cdots\alpha_{m})=0.
\]

When $n=2m$, the left hand side in condition 3) equals
\begin{eqnarray*}
 &  & \sum_{i}(\omega_{m-1}-d_{0}\lambda_{m-1}-d^{\prime}\lambda_{m-1})(\alpha_{i},e;\alpha_{1},\cdots\widehat{\alpha_{i}},\cdots\alpha_{m})\\
 & = & (\delta\omega)_{m}(e;\alpha_{1},\cdots,\alpha_{m})+\sum_{i}\nabla_{e}\lambda_{m-1}(\alpha_{i};\cdots\widehat{\alpha_{i}},\cdots)+\sum_{i}\lambda_{m-1}(\alpha_{i}\circ e;\cdots\widehat{\alpha_{i}},\cdots)\\
 &  & +\sum_{j\neq i}(-1)\lambda_{m-1}(e;\cdots\widehat{\alpha_{i}},\cdots\alpha_{j}\circ\alpha_{i},\cdots)+\sum_{j\neq i}\lambda_{m-1}(\alpha_{i};\cdots\widehat{\alpha_{i}},\cdots,\alpha_{j}\circ e,\cdots)\\
 & = & (\delta\omega)_{m}(e;\alpha_{1},\cdots\alpha_{m})+\nabla_{e}\omega_{m}(\alpha_{1},\cdots\alpha_{m})+\frac{1}{m}\sum_{i}\omega_{m}(\alpha_{1},\cdots\alpha_{i}\circ e,\cdots\alpha_{m})\\
 &  & +\sum_{i<j}(-1)\lambda_{m-1}(e;\alpha_{i}\circ\alpha_{j}+\alpha_{j}\circ\alpha_{i},\cdots\widehat{\alpha_{i}},\cdots\widehat{\alpha_{j}},\cdots)+\frac{1}{m}\sum_{j}\sum_{i\neq j}\omega_{m}(\cdots\alpha_{j}\circ e,\cdots)\\
 & = & (\delta\omega)_{m}(e;\alpha_{1},\cdots\alpha_{m})+(d_{0}\omega)_{m}(e;\alpha_{1},\cdots\alpha_{m})+(\frac{1}{m}+\frac{m-1}{m})\sum_{i}\omega_{m}(\cdots\alpha_{i}\circ e,\cdots)\\
 & = & (d\omega)_{m}(e;\alpha_{1},\cdots\alpha_{m})\\
 & = & 0
\end{eqnarray*}

Step 2:

Suppose $\lambda_{m-1},\cdots,\lambda_{k}(k>0)$ are already defined
so that they satisfy Lambda Conditions, we will construct $\lambda_{k-1}$,
so that it also satisfies Lambda Conditions.

By $\mathfrak{k}$-linearity and the decomposition $\mathcal{E}=\mathcal{H}\oplus\mathcal{X}$,
in order to determine $\lambda_{k-1}$, we only need to define the
value of $\lambda_{k-1}(e_{1},\cdots e_{n+1-2k};\alpha_{1},\cdots\alpha_{k})$
in which each $e_{a}$ is either in $\mathcal{H}$ or in $\mathcal{X}$.

First we let 
\begin{eqnarray}
 &  & \lambda_{k-1}(\beta_{1},\cdots\beta_{l},x_{1},\cdots x_{n+1-2k-l};\alpha_{1},\cdots\alpha_{k-1})\label{eq:1-1}\\
 & \triangleq & \frac{1}{k+l-1}\sum_{1\leq j\leq l}(-1)^{j+1}(\omega_{k}-d_{0}\lambda_{k}-d^{\prime}\lambda_{k})(\beta_{1},\cdots\widehat{\beta_{j}},\cdots\beta_{l},\cdots;\beta_{j},\cdots)\nonumber 
\end{eqnarray}
$\forall\beta_{r},\alpha_{s}\in\mathcal{H},x_{a}\in\mathcal{X}.$

(We call $(\beta,\cdots\beta,x,\cdots x)$ a regular permutation.
)

Note that if $l=0$, we simply let 
\[
\lambda_{k-1}(x_{1},\cdots,x_{n+1-2k};\alpha_{1},\cdots\alpha_{k-1})=0.
\]

For a general permutation $\sigma$, an $x\in\mathcal{X}$ in $\sigma$
is called an irregular element iff there exists at least one element
of $\mathcal{H}$ standing behind $x$ in $\sigma$. The value of
$\lambda_{k-1}(\sigma;\cdots)$ is determined inductively as follows:

If the number of irregular elements in $\sigma$ is $0$, $\sigma$
is a regular permutation. So the value of $\lambda_{k-1}(\sigma;\cdots)$
is determined by Equation \ref{eq:1-1}.

Suppose the value of $\lambda_{k-1}(\sigma;\cdots)$ is already determined
for $\sigma$ with irregular elements less than $t\ (t\geq1)$. Now
for a permutation $\sigma$ with $t$ irregular elements, assume the
last of them is $y\in\mathcal{X}$, and $\sigma=(\bullet,y,\beta_{1},\cdots\beta_{r},x_{1},\cdots x_{s}),\ \beta_{i}\in\mathcal{H},x_{j}\in\mathcal{X}.$
Switching $y$ with $\beta_{1},\cdots\beta_{r}$ one by one, finally
we will get a permutation $\tilde{\sigma}=(\bullet,\beta_{1},\cdots\beta_{r},y,x_{1},\cdots,x_{s})$,
which has $t-1$ irregular elements. The value of $\lambda_{k-1}(\tilde{\sigma};\cdots)$
is already determined. By weak skew-symmetricity we let
\[
\lambda_{k-1}(\sigma;\cdots)\triangleq(-1)^{r}\lambda_{k-1}(\tilde{\sigma};\cdots)+\sum_{1\leq i\leq r}(-1)^{i}\lambda_{k}(\bullet,\beta_{1},\cdots\widehat{\beta_{i}},\cdots\beta_{r},x_{1},\cdots x_{s};\partial\langle y,\beta_{i}\rangle,\cdots).
\]

As a summary, we have extended $\lambda_{k-1}$ from regular permutations
to general permutations by weak skew-symmetricity. The extension could
be written as a formula:
\[
\lambda_{k-1}(\sigma;\cdots)=(\pm1)\lambda_{k-1}(\bar{\sigma};\cdots)+\sum(\pm1)\lambda_{k}(\bullet;\bullet),
\]
where $\bar{\sigma}$ is the regular permutation corresponding to
$\sigma$.We observe that, for different $k$, if we do exactly the
same switchings, then the extension formulas should be similar (each
summand has the same sign, with the subscripts of $\lambda$ modified
correspondingly). For example, if we have an extension formula for
$k$:
\[
\lambda_{k}(\sigma;\cdots)=(\pm1)\lambda_{k}(\bar{\sigma};\cdots)+\sum(\pm1)\lambda_{k+1}(\bullet;\bullet),
\]
then for $k-1$, we have similar formula:
\[
\lambda_{k-1}(\beta,\sigma,x;\cdots)=(\pm1)\lambda_{k-1}(\beta,\bar{\sigma},x;\cdots)+\sum(\pm1)\lambda_{k}(\beta,\bullet,x;\bullet),\quad\forall\beta\in\mathcal{H},x\in\mathcal{X}.
\]

Step 3:

We need to prove that $\lambda_{k-1}$ constructed above satisfies
Lambda Conditions:

Proof of Lambda Condition 1):

First we prove that $\lambda_{k-1}$ for regular permutations is weakly
skew-symmetric up to $\lambda_{k}$ for the arguments in $\mathcal{H}$
and $\mathcal{X}$ respectively. 

When the number of arguments in $\mathcal{H}$ is $0$, the result
is obvious.

Otherwise, for the arguments in $\mathcal{H}$,

\begin{eqnarray*}
 &  & \lambda_{k-1}(\beta_{1},\cdots\beta_{r},\beta_{r+1},\cdots x_{1},\cdots;\alpha_{1},\cdots\alpha_{k-1})+\lambda_{k-1}(\beta_{1},\cdots\beta_{r+1},\beta_{r},\cdots x_{1},\cdots;\cdots)\\
 & = & \frac{1}{k+l-1}((-1)^{r+1}+(-1)^{r})(\omega_{k}-d_{0}\lambda_{k}-d^{\prime}\lambda_{k})(\cdots\widehat{\beta_{r}},\beta_{r+1}\cdots;\beta_{r},\cdots)\\
 &  & +\frac{1}{k+l-1}((-1)^{r}+(-1)^{r+1})(\omega_{k}-d_{0}\lambda_{k}-d^{\prime}\lambda_{k})(\cdots\beta_{r},\widehat{\beta_{r+1}},\cdots;\beta_{r+1},\cdots)\\
 &  & +\frac{1}{k+l-1}\sum_{j\neq r,r+1}(-1)^{j+1}\{(\omega_{k}-d_{0}\lambda_{k}-d^{\prime}\lambda_{k})(\cdots\widehat{\beta_{j}},\cdots\beta_{r},\beta_{r+1}\cdots;\beta_{j},\cdots)\\
 &  & \quad+(\omega_{k}-d_{0}\lambda_{k}-d^{\prime}\lambda_{k})(\cdots\widehat{\beta_{j}},\cdots\beta_{r+1},\beta_{r}\cdots;\beta_{j},\cdots)\}\\
 & = & \frac{1}{k+l-1}\sum_{j\neq r,r+1}(-1)^{j}\\
 &  & \{(d_{0}\lambda_{k})(\cdots\widehat{\beta_{j}},\cdots\beta_{r},\beta_{r+1}\cdots;\beta_{j},\cdots)+(d_{0}\lambda_{k})(\cdots\widehat{\beta_{j}},\cdots\beta_{r+1},\beta_{r}\cdots;\beta_{j},\cdots)\\
 &  & +(d^{\prime}\lambda_{k})(\cdots\widehat{\beta_{j}},\cdots\beta_{r},\beta_{r+1}\cdots;\beta_{j},\cdots)+(d^{\prime}\lambda_{k})(\cdots\widehat{\beta_{j}},\cdots\beta_{r+1},\beta_{r}\cdots;\beta_{j},\cdots)\}\\
 &  & \mbox{(by equation \ref{eq:d0 weak skewsym} and \ref{eq:d prime weak skewsym})}\\
 & = & \frac{1}{k+l-1}\sum_{j\neq r,r+1}(-1)^{j}\{-d_{0}\lambda_{k+1}(\cdots\widehat{\beta_{j}},\cdots\widehat{\beta_{r}},\widehat{\beta_{r+1}},\cdots;\partial\langle\beta_{r},\beta_{r+1}\rangle,\beta_{j},\cdots)\\
 &  & \quad-\lambda_{k}(\partial\langle\beta_{r},\beta_{r+1}\rangle,\cdots\widehat{\beta_{j}},\cdots\widehat{\beta_{r}},\widehat{\beta_{r+1}},\cdots;\beta_{j},\cdots)\\
 &  & \quad-d^{\prime}\lambda_{k+1}(\cdots\widehat{\beta_{j}},\cdots\widehat{\beta_{r}},\widehat{\beta_{r+1}},\cdots;\partial\langle\beta_{r},\beta_{r+1}\rangle,\beta_{j},\cdots)\}\\
 & = & 0
\end{eqnarray*}

For the arguments in $\mathcal{X}$,

\begin{eqnarray*}
 &  & \lambda_{k-1}(\beta_{1}\cdots\beta_{l},x_{1}\cdots x_{a},x_{a+1}\cdots;\cdots)+\lambda_{k-1}(\beta_{1}\cdots\beta_{l},x_{1}\cdots x_{a+1},x_{a}\cdots;\cdots)\\
 &  & +\lambda_{k}(\beta_{1}\cdots\beta_{l},x_{1},\cdots\widehat{x_{a}},\widehat{x_{a+1}},\cdots;\partial\langle x_{a},x_{a+1}\rangle,\cdots)\\
 & = & \frac{1}{k+l-1}\sum_{j}(-1)^{j+1}\{(\omega_{k}-d_{0}\lambda_{k}-d^{\prime}\lambda_{k})(\cdots\widehat{\beta_{j}},\cdots x_{a},x_{a+1}\cdots;\beta_{j},\cdots)\\
 &  & \quad+(\omega_{k}-d_{0}\lambda_{k}-d^{\prime}\lambda_{k})(\cdots\widehat{\beta_{j}},\cdots x_{a+1},x_{a},\cdots;\beta_{j},\cdots)\}\\
 &  & +\frac{1}{k+l}\sum_{j}(-1)^{j+1}(\omega_{k+1}-d_{0}\lambda_{k+1}-d^{\prime}\lambda_{k+1})(\cdots\widehat{\beta_{j}},\cdots\widehat{x_{a}},\widehat{x_{a+1}},\cdots;\beta_{j},\partial\langle x_{a},x_{a+1}\rangle,\cdots)\\
 &  & \mbox{(by equation \ref{eq:d0 weak skewsym} and \ref{eq:d prime weak skewsym})}\\
 & = & \frac{1}{k+l-1}\sum_{j}(-1)^{j+1}\{-\omega_{k+1}(\cdots\widehat{\beta_{j}},\cdots\widehat{x_{a}},\widehat{x_{a+1}},\cdots;\beta_{j},\partial\langle x_{a},x_{a+1}\rangle,\cdots)\\
 &  & \quad+d_{0}\lambda_{k+1}(\cdots\widehat{\beta_{j}},\cdots\widehat{x_{a}},\widehat{x_{a+1}},\cdots;\beta_{j},\partial\langle x_{a},x_{a+1}\rangle,\cdots)\\
 &  & \quad+\lambda_{k}(\langle x_{a},x_{a+1}\rangle,\cdots\widehat{\beta_{j}},\cdots\widehat{x_{a}},\widehat{x_{a+1}},\cdots;\beta_{j},\cdots)\\
 &  & \quad+d^{\prime}\lambda_{k+1}(\cdots\widehat{\beta_{j}},\cdots\widehat{x_{a}},\widehat{x_{a+1}},\cdots;\beta_{j},\partial\langle x_{a},x_{a+1}\rangle,\cdots)\}\\
 &  & +\frac{1}{k+l}\sum_{j}(-1)^{j+1}(\omega_{k+1}-d_{0}\lambda_{k+1}-d^{\prime}\lambda_{k+1})(\cdots\widehat{\beta_{j}},\cdots\widehat{x_{a}},\widehat{x_{a+1}},\cdots;\beta_{j},\partial\langle x_{a},x_{a+1}\rangle,\cdots)\\
 & = & \frac{1}{(k+l-1)(k+l)}\sum_{j}(-1)^{j}(\omega_{k+1}-d_{0}\lambda_{k+1}-d^{\prime}\lambda_{k+1})(\cdots\widehat{\beta_{j}},\cdots\widehat{x_{a}},\widehat{x_{a+1}},\cdots;\beta_{j},\partial\langle x_{a},x_{a+1}\rangle,\cdots)\\
 &  & +\frac{1}{k+l-1}\sum_{j}(-1)^{j+1}\lambda_{k}(\partial\langle x_{a},x_{a+1}\rangle,\cdots\widehat{\beta_{j}},\cdots\widehat{x_{a}},\widehat{x_{a+1}},\cdots;\beta_{j},\cdots)\\
 & = & \frac{1}{(k+l-1)(k+l)}\sum_{j}(-1)^{j+1}\sum_{i<j}(-1)^{i}\\
 &  & \quad(\omega_{k+1}-d_{0}\lambda_{k+1}-d^{\prime}\lambda_{k+1})(\partial\langle x_{a},x_{a+1}\rangle,\cdots\widehat{\beta_{i}},\cdots\widehat{\beta_{j}},\cdots\widehat{x_{a}},\widehat{x_{a+1}},\cdots;\beta_{i},\beta_{j},\cdots)\\
 &  & +\frac{1}{(k+l-1)(k+l)}\sum_{j}(-1)^{j+1}\sum_{i>j}(-1)^{i+1}\\
 &  & \quad(\omega_{k+1}-d_{0}\lambda_{k+1}-d^{\prime}\lambda_{k+1})(\partial\langle x_{a},x_{a+1}\rangle,\cdots\widehat{\beta_{j}},\cdots\widehat{\beta_{i}},\cdots\widehat{x_{a}},\widehat{x_{a+1}},\cdots;\beta_{i},\beta_{j},\cdots)\\
 & = & 0
\end{eqnarray*}

Next, for general permutation $\sigma$, we give the proof in the
following three cases:

(1). $\lambda_{k-1}(\sigma_{1},\beta_{1},\beta_{2},\sigma_{2};\cdots)+\lambda_{k-1}(\sigma_{1},\beta_{2},\beta_{1},\sigma_{2};\cdots)=0,\ \forall\beta_{1},\beta_{2}\in\mathcal{H}$

If every element in $\sigma_{1}$ is in $\mathcal{H}$, then
\begin{eqnarray*}
 &  & \lambda_{k-1}(\sigma_{1},\beta_{1},\beta_{2},\sigma_{2};\cdots)+\lambda_{k-1}(\sigma_{1},\beta_{2},\beta_{1},\sigma_{2};\cdots)\\
 & = & (\pm1)\lambda_{k-1}(\sigma_{1},\beta_{1},\beta_{2},\bar{\sigma_{2}};\cdots)+\sum(\pm1)\lambda_{k}(\sigma_{1},\beta_{1},\beta_{2},\bullet;\bullet)\\
 &  & +(\pm1)\lambda_{k-1}(\sigma_{1},\beta_{2},\beta_{1},\bar{\sigma_{2}};\cdots)+\sum(\pm1)\lambda_{k}(\sigma_{1},\beta_{2},\beta_{1},\bullet;\bullet)\\
 & = & (\pm1)\big(\lambda_{k-1}(\sigma_{1},g_{1},g_{2},\bar{\sigma_{2}};\cdots)+\lambda_{k-1}(\sigma_{1},g_{2},g_{1},\bar{\sigma_{2}};\cdots)\big)\\
 &  & +\sum(\pm1)\big(\lambda_{k}(\sigma_{1},g_{1},g_{2},\bullet;\bullet)+\lambda_{k}(\sigma_{1},g_{2},g_{1},\bullet;\bullet)\big)\\
 & = & 0
\end{eqnarray*}

Now suppose (1) holds for $\sigma_{1}$ containing at most $m$ elements
in $\mathcal{X}$, consider the case when $\sigma_{1}$ contains $m+1$
elements in $\mathcal{X}$, suppose $x$ is the last one of them,
move $x$ to the last position in $\sigma_{1}$ and denote the elements
in front of $x$ as $\tilde{\sigma_{1}}$, $\tilde{\sigma_{1}}$ contains
$m$ elements in $X$.
\begin{eqnarray*}
 &  & \lambda_{k-1}(\sigma_{1},\beta_{1},\beta_{2},\sigma_{2};\cdots)+\lambda_{k-1}(\sigma_{1},\beta_{2},\beta_{1},\sigma_{2};\cdots)\\
 & = & (\pm1)\lambda_{k-1}(\sigma_{1},\beta_{1},\beta_{2},\bar{\sigma_{2}};\cdots)+\sum(\pm1)\lambda_{k}(\sigma_{1},\beta_{1},\beta_{2},\bullet;\bullet)\\
 &  & +(\pm1)\lambda_{k-1}(\sigma_{1},\beta_{2},\beta_{1},\bar{\sigma_{2}};\cdots)+\sum(\pm1)\lambda_{k}(\sigma_{1},\beta_{2},\beta_{1},\bullet;\bullet)\\
 & = & (\pm1)\big(\lambda_{k-1}(\sigma_{1},\beta_{1},\beta_{2},\bar{\sigma_{2}};\cdots)+\lambda_{k-1}(\sigma_{1},\beta_{2},\beta_{1},\bar{\sigma_{2}};\cdots)\big)\\
 & = & (\pm1)\big((\pm1)\lambda_{k-1}(\tilde{\sigma_{1}},x,\beta_{1},\beta_{2},\bar{\sigma_{2}};\cdots)+\sum(\pm1)\lambda_{k}(\bullet,\beta_{1},\beta_{2},\bar{\sigma_{2}};\bullet)\\
 &  & \quad+(\pm1)\lambda_{k-1}(\tilde{\sigma_{1}},x,\beta_{2},\beta_{1},\bar{\sigma_{2}};\cdots)+\sum(\pm1)\lambda_{k}(\bullet,\beta_{2},\beta_{1},\bar{\sigma_{2}};\bullet)\big)\\
 & = & (\pm1)\big(\lambda_{k-1}(\tilde{\sigma_{1}},x,\beta_{1},\beta_{2},\bar{\sigma_{2}};\cdots)+\lambda_{k-1}(\tilde{\sigma_{1}},x,\beta_{2},\beta_{1},\bar{\sigma_{2}};\cdots)\big)\\
 & = & (\pm1)\big(-\lambda_{k}(\tilde{\sigma_{1}},\beta_{2},\bar{\sigma_{2}};\partial\langle x,\beta_{1}\rangle,\cdots)+\lambda_{k}(\tilde{\sigma_{1}},\beta_{1},\bar{\sigma_{2}};\partial\langle x,\beta_{2}\rangle,\cdots)+\lambda_{k-1}(\tilde{\sigma_{1}},\beta_{1},\beta_{2},x,\bar{\sigma_{2}};\cdots)\\
 &  & \ -\lambda_{k}(\tilde{\sigma_{1}},\beta_{1},\bar{\sigma_{2}};\partial\langle x,\beta_{2}\rangle,\cdots)+\lambda_{k}(\tilde{\sigma_{1}},\beta_{2},\bar{\sigma_{2}};\partial\langle x,\beta_{1}\rangle,\cdots)+\lambda_{k-1}(\tilde{\sigma_{1}},\beta_{2},\beta_{1},x,\bar{\sigma_{2}};\cdots)\big)\\
 & = & (\pm1)\big(\lambda_{k-1}(\tilde{\sigma_{1}},\beta_{1},\beta_{2},x,\bar{\sigma_{2}};\cdots)+\lambda_{k-1}(\tilde{\sigma_{1}},\beta_{2},\beta_{1},x,\bar{\sigma_{2}};\cdots)\big)\\
 & = & 0
\end{eqnarray*}

By mathematical induction, (1) is proved.

(2). $\lambda_{k-1}(\sigma_{1},\beta,y,\sigma_{2};\cdots)+\lambda_{k-1}(\sigma_{1},y,\beta,\sigma_{2};\cdots)=-\lambda_{k}(\sigma_{1},\sigma_{2};\partial\langle\beta,y\rangle,\cdots),\ \forall\beta\in\mathcal{H},\ y\in\mathcal{X}$

\begin{eqnarray*}
 &  & \lambda_{k-1}(\sigma_{1},\beta,y,\sigma_{2};\cdots)+\lambda_{k-1}(\sigma_{1},y,\beta,\sigma_{2};\cdots)\\
 & = & (\pm1)\lambda_{k-1}(\sigma_{1},\beta,y,\bar{\sigma_{2}};\cdots)+\sum(\pm1)\lambda_{k}(\sigma_{1},\beta,y,\bullet;\bullet)\\
 &  & +(\pm1)\lambda_{k-1}(\sigma_{1},y,\beta,\bar{\sigma_{2}};\cdots)+\sum(\pm1)\lambda_{k}(\sigma_{1},\beta,g,\bullet;\bullet)\\
 & = & (\pm1)\big(\lambda_{k-1}(\sigma_{1},\beta,y,\bar{\sigma_{2}};\cdots)+\lambda_{k-1}(\sigma_{1},y,\beta,\bar{\sigma_{2}};\cdots)\big)\\
 &  & +\sum(\pm1)\big(\lambda_{k}(\sigma_{1},\beta,y,\bullet;\bullet)+\lambda_{k}(\sigma_{1},y,\beta,\bullet;\bullet)\big)\\
 & = & (\pm1)\big(\lambda_{k-1}(\sigma_{1},\beta,y,\bar{\sigma_{2}};\cdots)+(-\lambda_{k}(\sigma_{1},\bar{\sigma_{2}};\partial\langle\beta,y\rangle,\cdots)-\lambda_{k-1}(\sigma_{1},\beta,y,\bar{\sigma_{2}};\cdots))\big)\\
 &  & -\sum(\pm1)\lambda_{k+1}(\sigma_{1},\bullet;\partial\langle\beta,y\rangle,\bullet)\\
 & = & -\big((\pm1)\lambda_{k}(\sigma_{1},\bar{\sigma_{2}};\partial\langle\beta,y\rangle,\cdots)+\sum(\pm1)\lambda_{k+1}(\sigma_{1},\bullet;\partial\langle\beta,y\rangle,\bullet)\big)\\
 & = & -\lambda_{k}(\sigma_{1},\sigma_{2};\partial\langle\beta,y\rangle,\cdots)
\end{eqnarray*}

(3). $\lambda_{k-1}(\sigma_{1},y_{1},y_{2},\sigma_{2};\cdots)+\lambda_{k-1}(\sigma_{1},y_{2},y_{1},\sigma_{2};\cdots)=-\lambda_{k}(\sigma_{1},\sigma_{2};\partial\langle y_{1},y_{2}\rangle,\cdots)$,
$\forall y_{1},y_{2}\in X$.

Suppose $\bar{\sigma_{2}}=(\beta_{1},\cdots\beta_{a},x_{1},\cdots x_{b})$,
then
\begin{eqnarray*}
 &  & \lambda_{k-1}(\sigma_{1},y_{1},y_{2},\bar{\sigma_{2}};\cdots)+\lambda_{k-1}(\sigma_{1},y_{2},y_{1},\bar{\sigma_{2}};\cdots)\\
 & = & \big(\sum_{1\leq i\leq a}(-1)^{i}\lambda_{k}(\sigma_{1},y_{1},\beta_{1},\cdots\widehat{\beta_{i}},\cdots\beta_{a},x_{1},\cdots x_{b};\partial\langle y_{2},\beta_{i}\rangle,\cdots)\\
 &  & \quad+(-1)^{a}\lambda_{k-1}(\sigma_{1},y_{1},\beta_{1},\cdots\beta_{a},y_{2},x_{1},\cdots x_{b};\cdots)\big)\\
 &  & +\big(\sum_{1\leq j\leq a}(-1)^{j}\lambda_{k}(\sigma_{1},y_{2},\beta_{1},\cdots\widehat{\beta_{j}},\cdots\beta_{a},x_{1},\cdots x_{b};\partial\langle y_{1},\beta_{j}\rangle,\cdots)\\
 &  & \quad+(-1)^{a}\lambda_{k-1}(\sigma_{1},y_{2},\beta_{1},\cdots\beta_{a},y_{1},x_{1},\cdots x_{b};\cdots)\big)\\
 & = & \sum_{i}(-1)^{i}\big(\sum_{1\leq j<i}(-1)^{j}\lambda_{k+1}(\sigma_{1},\beta_{1},\cdots\widehat{\beta_{j}},\cdots\widehat{\beta_{i}},\cdots\beta_{a},x_{1},\cdots x_{b};\partial\langle y_{1},\beta_{j}\rangle,\partial\langle y_{2},\beta_{i}\rangle,\cdots)\\
 &  & \quad+\sum_{j>i}(-1)^{j+1}\lambda_{k+1}(\sigma_{1},\beta_{1},\cdots\widehat{\beta_{i}},\cdots\widehat{\beta_{j}},\cdots\beta_{a},x_{1}\cdots x_{b};\partial\langle y_{1},\beta_{j}\rangle,\partial\langle y_{2},\beta_{i}\rangle,\cdots)\\
 &  & \quad+(-1)^{a+1}\lambda_{k}(\sigma_{1},\beta_{1},\cdots\widehat{\beta_{i}},\cdots\beta_{a},y_{1},x_{1},\cdots x_{b};\partial\langle y_{2},\beta_{i}\rangle,\cdots)\big)\\
 &  & +(-1)^{a}\big(\sum_{1\leq j\leq a}(-1)^{j}\lambda_{k}(\sigma_{1},\beta_{1},\cdots\widehat{\beta_{j}},\cdots\beta_{a},y_{2},x_{1},\cdots x_{b};\partial\langle y_{1},\beta_{j}\rangle,\cdots)\\
 &  & \quad+(-1)^{a}\lambda_{k-1}(\sigma_{1},\beta_{1},\cdots\beta_{a},y_{1},y_{2},x_{1},\cdots x_{b};\cdots)\big)\\
 &  & +\sum_{j}(-1)^{j}\big(\sum_{1\leq i<j}(-1)^{i}\lambda_{k+1}(\sigma_{1},\beta_{1},\cdots\widehat{\beta_{i}},\cdots\widehat{\beta_{j}},\cdots\beta_{a},x_{1},\cdots x_{b};\partial\langle y_{1},\beta_{j}\rangle,\partial\langle y_{2},\beta_{i}\rangle,\cdots)\\
 &  & \quad+\sum_{i>j}(-1)^{i+1}\lambda_{k+1}(\sigma_{1},\beta_{1},\cdots\widehat{\beta_{j}},\cdots\widehat{\beta_{i}},\cdots\beta_{a},x_{1},\cdots x_{b};\partial\langle y_{1},\beta_{j}\rangle,\partial\langle y_{2},\beta_{i}\rangle,\cdots)\\
 &  & \quad+(-1)^{a+1}\lambda_{k}(\sigma_{1},\beta_{1},\cdots\widehat{\beta_{j}},\cdots\beta_{a},y_{2},x_{1},\cdots x_{b};\partial\langle y_{1},\beta_{j}\rangle,\cdots)\big)\\
 &  & +(-1)^{a}\big(\sum_{1\leq i\leq a}(-1)^{i}\lambda_{k}(\sigma_{1},\beta_{1},\cdots\widehat{\beta_{i}},\cdots\beta_{a},y_{1},x_{1},\cdots x_{b};\partial\langle y_{2},\beta_{i}\rangle,\cdots)\\
 &  & \quad+(-1)^{a}\lambda_{k-1}(\sigma_{1},\beta_{1},\cdots\beta_{a},y_{2},y_{1},x_{1},\cdots x_{b};\cdots)\big)\\
 & = & \lambda_{k-1}(\sigma_{1},\beta_{1},\cdots\beta_{a},y_{1},y_{2},x_{1},\cdots x_{b};\cdots)+\lambda_{k-1}(\sigma_{1},\beta_{1},\cdots\beta_{a},y_{2},y_{1},x_{1},\cdots x_{b};\cdots)
\end{eqnarray*}

So
\begin{eqnarray*}
 &  & \lambda_{k-1}(\sigma_{1},y_{1},y_{2},\sigma_{2};\cdots)+\lambda_{k-1}(\sigma_{1},y_{2},y_{1},\sigma_{2};\cdots)\\
 & = & (\pm1)\lambda_{k-1}(\sigma_{1},y_{1},y_{2},\bar{\sigma_{2}};\cdots)+\sum(\pm1)\lambda_{k}(\sigma_{1},y_{1},y_{2},\bullet;\bullet)\\
 &  & +(\pm1)\lambda_{k-1}(\sigma_{1},y_{2},y_{1},\bar{\sigma_{2}};\cdots)+\sum(\pm1)\lambda_{k}(\sigma_{1},y_{2},y_{1},\bullet;\bullet)\\
 & = & (\pm1)\big(\lambda_{k-1}(\sigma_{1},y_{1},y_{2},\bar{\sigma_{2}};\cdots)+\lambda_{k-1}(\sigma_{1},y_{2},y_{1},\bar{\sigma_{2}};\cdots)\big)\\
 &  & +\sum(\pm1)\big(\lambda_{k}(\sigma_{1},y_{1},y_{2},\bullet;\bullet)+\lambda_{k}(\sigma_{1},y_{2},y_{1},\bullet;\bullet)\big)\\
 & = & (\pm1)\big(\lambda_{k-1}(\sigma_{1},\beta_{1},\cdots\beta_{a},y_{1},y_{2},x_{1},\cdots x_{b};\cdots)+\lambda_{k-1}(\sigma_{1},\beta_{1},\cdots\beta_{a},y_{2},y_{1},x_{1},\cdots x_{b};\cdots)\big)\\
 &  & -\sum(\pm1)\lambda_{k+1}(\sigma_{1},\widehat{y_{1}},\widehat{y_{2}},\bullet;\partial\langle y_{1},y_{2}\rangle,\bullet)\\
 &  & \mbox{(denote by \ensuremath{\mu}the permutation \ensuremath{(\sigma_{1},\beta_{1},\cdots,\beta_{a})})}\\
 & = & (\pm1)\big((\pm1)\lambda_{k-1}(\bar{\mu},y_{1},y_{2},x_{1},\cdots x_{b};\cdots)+\sum(\pm1)\lambda_{k}(\bullet,y_{1},y_{2},x_{1},\cdots x_{b};\bullet)\\
 &  & \quad+(\pm1)\lambda_{k-1}(\bar{\mu},y_{2},y_{1},x_{1},\cdots,x_{b};\cdots)+\sum(\pm1)\lambda_{k}(\bullet,y_{2},y_{1},x_{1},\cdots,x_{b};\bullet)\big)\\
 &  & -\sum(\pm1)\lambda_{k+1}(\sigma_{1},\widehat{y_{1}},\widehat{y_{2}},\bullet;\partial\langle y_{1},y_{2}\rangle,\bullet)\\
 & = & -(\pm1)\big((\pm1)\lambda_{k}(\bar{\mu},\widehat{y_{1}},\widehat{y_{2}},x_{1},\cdots;\partial\langle y_{1},y_{2}\rangle,\cdots)+\sum(\pm1)\lambda_{k+1}(\bullet,\widehat{y_{1}},\widehat{y_{2}},x_{1},\cdots;\partial\langle y_{1},y_{2}\rangle,\bullet)\big)\\
 &  & -\sum(\pm1)\lambda_{k+1}(\sigma_{1},\widehat{y_{1}},\widehat{y_{2}},\bullet;\partial\langle y_{1},y_{2}\rangle,\bullet)\\
 & = & -\big((\pm1)\lambda_{k}(\mu,\widehat{y_{1}},\widehat{y_{2}},x_{1},\cdots x_{b};\partial\langle y_{1},y_{2}\rangle,\cdots)+\sum(\pm1)\lambda_{k+1}(\sigma_{1},\widehat{y_{1}},\widehat{y_{2}},\bullet;\partial\langle y_{1},y_{2}\rangle,\bullet)\big)\\
 & = & -\big((\pm1)\lambda_{k}(\sigma_{1},\widehat{y_{1}},\widehat{y_{2}},\bar{\sigma_{2}};\partial\langle y_{1},y_{2}\rangle,\cdots)+\sum(\pm1)\lambda_{k+1}(\sigma_{1},\widehat{y_{1}},\widehat{y_{2}},\bullet;\partial\langle y_{1},y_{2}\rangle,\bullet)\big)\\
 & = & -\lambda_{k}(\sigma_{1},\widehat{y_{1}},\widehat{y_{2}},\sigma_{2};\partial\langle y_{1},y_{2}\rangle,\cdots)
\end{eqnarray*}

Combining (1)(2)(3) above, Lambda condition 1) for $\lambda_{k-1}$
is proven.

Proof of Lambda Condition 2):

Since the weak skew-symmetricity of $\lambda_{k-1}$ is already proven,
we only need to prove that $\lambda_{k-1}$ is $R$-linear in the
last argument of $\mathcal{E}$.

When the last argument is in $\mathcal{X}$:
\begin{eqnarray*}
 &  & \lambda_{k-1}(\sigma,fx;\cdots)-f\lambda_{k-1}(\sigma,x;\cdots)\\
 & = & \big((\pm1)\lambda_{k-1}(\bar{\sigma},fx;\cdots)+\sum(\pm1)\lambda_{k}(\bullet,fx;\bullet)\big)-f\big((\pm1)\lambda_{k-1}(\bar{\sigma},x;\cdots)+\sum(\pm1)\lambda_{k}(\bullet,x;\bullet)\big)\\
 &  & \mbox{(suppose \ensuremath{\bar{\sigma}=(\beta_{1},\cdots\beta_{l},x_{1},\cdots x_{n-2k-l})})}\\
 & = & \frac{\pm1}{k+l-1}\sum_{j}(-1)^{j+1}\{(\omega_{k}-d_{0}\lambda_{k}-d^{\prime}\lambda_{k})(\cdots\widehat{\beta_{j}},\cdots fx;\beta_{j},\cdots)\\
 &  & \quad-f(\omega_{k}-d_{0}\lambda_{k}-d^{\prime}\lambda_{k})(\cdots\widehat{\beta_{j}},\cdots x;\beta_{j},\cdots)\}\\
 & = & 0\mbox{ (By Equation \ref{eq:d0 weak R linear} and \ref{eq:d prime weak R linear})}.
\end{eqnarray*}

When the last argument is in $\mathcal{H}$:
\begin{eqnarray*}
 &  & \lambda_{k-1}(\sigma,f\beta;\cdots)-f\lambda_{k-1}(\sigma,\beta;\cdots)\\
 & = & \big((\pm1)\lambda_{k-1}(\bar{\sigma},f\beta;\cdots)+\sum(\pm1)\lambda_{k}(\bullet,f\beta;\bullet)\big)-f\big((\pm1)\lambda_{k-1}(\bar{\sigma},\beta;\cdots)+\sum(\pm1)\lambda_{k}(\bullet,\beta;\bullet)\big)\\
 &  & \mbox{(suppose \ensuremath{\bar{\sigma}=(\beta_{1},\cdots\beta_{l-1},x_{1},\cdots x_{n+1-2k-l})})}\\
 & = & (\pm1)\big((-1)^{n+1+l}\lambda_{k-1}(\beta_{1},\cdots\beta_{l-1},f\beta,x_{1},\cdots;\cdots)\\
 &  & \quad+\sum_{a}(-1)^{a+n+l}\lambda_{k}(\beta_{1},\cdots\beta_{l-1},x_{1},\cdots\widehat{x_{a}},\cdots x_{n+1-2k-l};\partial\langle x_{a},f\beta\rangle,\cdots)\big)\\
 &  & -(\pm1)f\big((-1)^{n+1+l}\lambda_{k-1}(\beta_{1},\cdots\beta_{l-1},\beta,x_{1},\cdots;\cdots)\\
 &  & \quad+\sum_{a}(-1)^{a+n+l}\lambda_{k}(\beta_{1},\cdots\beta_{l-1},x_{1},\cdots\widehat{x_{a}},\cdots x_{n+1-2k-l};\partial\langle x_{a},\beta\rangle,\cdots)\big)\\
 & = & \frac{(\pm1)(-1)^{n+1+l}}{k+l-1}\sum_{j}(-1)^{j+1}\big((\omega_{k}-d_{0}\lambda_{k}-d^{\prime}\lambda_{k})(\cdots\widehat{\beta_{j}},\cdots f\beta,\cdots;\beta_{j},\cdots)\\
 &  & \quad-f(\omega_{k}-d_{0}\lambda_{k}-d^{\prime}\lambda_{k})(\cdots\widehat{\beta_{j}},\cdots\beta,\cdots;\beta_{j},\cdots)\big)\\
 &  & +\frac{(\pm1)(-1)^{n}}{k+l-1}\big((\omega_{k}-d_{0}\lambda_{k}-d^{\prime}\lambda_{k})(\cdots\widehat{\beta},\cdots;f\beta,\cdots)-f(\omega_{k}-d_{0}\lambda_{k}-d^{\prime}\lambda_{k})(\cdots\widehat{\beta},\cdots;\beta,\cdots)\big)\\
 &  & +(\pm1)\sum_{a}(-1)^{a+n+l}\langle x_{a},\beta\rangle\lambda_{k}(\cdots\widehat{\beta},\cdots\widehat{x_{a}},\cdots;\partial f,\cdots)\\
 &  & \mbox{(By Equation \ref{eq:d0 weak R linear}, \ref{eq:d prime weak R linear} and \ref{eq:d R linear})}\\
 & = & \frac{\pm1}{k+l-1}\sum_{j,a}(-1)^{n+l+j+a}\langle\beta,x_{a}\rangle\big((\omega_{k+1}-d_{0}\lambda_{k+1}-d^{\prime}\lambda_{k+1})(\cdots\widehat{\beta_{j}},\cdots\widehat{\beta},\cdots\widehat{x_{a}},\cdots;\partial f,\beta_{j},\cdots)\\
 &  & \quad-\lambda_{k}(\partial f,\cdots\widehat{\beta_{j}},\cdots\widehat{\beta},\cdots\widehat{x_{a}},\cdots;\beta_{j},\cdots)\big)\\
 &  & +\frac{\pm1}{k+l-1}\sum_{a}(-1)^{n+1+l+a}\langle\beta,x_{a}\rangle\lambda_{k}(\cdots\widehat{\beta},\cdots\widehat{x_{a}},\cdots;\partial f,\cdots)\\
 &  & +\frac{\pm1}{k+l-1}\sum_{j,a}(-1)^{a+n+l+j+1}\langle\beta,x_{a}\rangle(\omega_{k+1}-d_{0}\lambda_{k+1}-d^{\prime}\lambda_{k+1})(\cdots\widehat{\beta_{j}},\cdots\widehat{\beta},\cdots\widehat{x_{a}},\cdots;\partial f,\beta_{j},\cdots)\\
 & = & \frac{\pm1}{k+l-1}\sum_{a}\sum_{0\leq j\leq l-1}(-1)^{n+l+j+a+1}\langle\beta,x_{a}\rangle\lambda_{k}(\beta_{0}(\triangleq\partial f),\cdots\widehat{\beta_{j}},\cdots\beta_{l-1},\cdots\widehat{x_{a}},\cdots;\beta_{j},\cdots)\\
 & = & \frac{\pm1}{(k+l-1)^{2}}\sum_{a}(-1)^{n+l+a+1}\langle\beta,x_{a}\rangle\\
 &  & \big(\sum_{0\leq i<j\leq l-1}(-1)^{i+j}(\omega_{k+1}-d_{0}\lambda_{k+1}-d^{\prime}\lambda_{k+1})(\beta_{0},\cdots\widehat{\beta_{i}},\cdots\widehat{\beta_{j}},\cdots\widehat{x_{a}},\cdots;\beta_{i},\beta_{j},\cdots)\\
 &  & +\sum_{0\leq j<i\leq l-1}(-1)^{i+j+1}(\omega_{k+1}-d_{0}\lambda_{k+1}-d^{\prime}\lambda_{k+1})(\beta_{0},\cdots\widehat{\beta_{j}},\cdots\widehat{\beta_{i}},\cdots\widehat{x_{a}},\cdots;\beta_{i},\beta_{j},\cdots)\big)\\
 & = & 0
\end{eqnarray*}

Proof of Lambda Condition 3):

\begin{eqnarray*}
 &  & \lambda_{k-1}(\beta_{1},\cdots\beta_{l},x_{1},\cdots x_{n+1-2k-l};f\alpha_{1},\cdots\alpha_{k-1})-f\lambda_{k-1}(\cdots;\alpha_{1},\cdots\alpha_{k-1})\\
 & = & \frac{1}{k+l-1}\sum_{j}(-1)^{j+1}\big((\omega_{k}-d_{0}\lambda_{k}-d^{\prime}\lambda_{k})(\cdots\widehat{\beta_{j}},\cdots;\beta_{j},f\alpha_{1},\cdots)\\
 &  & \quad-f(\omega_{k}-d_{0}\lambda_{k}-d^{\prime}\lambda_{k})(\cdots\widehat{\beta_{j}},\cdots;\beta_{j},\alpha_{1},\cdots)\big)\\
 &  & \mbox{(By Equation \ref{eq:d R linear})}\\
 & = & \frac{1}{k+l-1}\sum_{j}(-1)^{j}\sum_{a}(-1)^{l+a+1}\langle x_{a},\alpha_{1}\rangle\lambda_{k}(\cdots\widehat{\beta_{j}},\cdots\widehat{x_{a}},\cdots;\beta_{j},\partial f,\widehat{\alpha_{1}},\cdots)\\
 & = & \frac{1}{(k+l-1)^{2}}\sum_{a}(-1)^{l+a+1}\langle x_{a},\alpha_{1}\rangle\\
 &  & \big(\sum_{i<j}(-1)^{j+i+1}(\omega_{k+1}-d_{0}\lambda_{k+1}-d^{\prime}\lambda_{k+1})(\cdots\widehat{\beta_{i}},\cdots\widehat{\beta_{j}},\cdots\widehat{x_{a}},\cdots;\beta_{i},\beta_{j},\partial f,\widehat{\alpha_{1}},\cdots)\\
 &  & +\sum_{i>j}(-1)^{j+i}(\omega_{k+1}-d_{0}\lambda_{k+1}-d^{\prime}\lambda_{k+1})(\cdots\widehat{\beta_{j}},\cdots\widehat{\beta_{i}},\cdots\widehat{x_{a}},\cdots;\beta_{i},\beta_{j},\partial f,\widehat{\alpha_{1}},\cdots)\big)\\
 & = & 0
\end{eqnarray*}

Proof of Lambda Condition 4):

For regular permutations: 
\begin{eqnarray*}
 &  & (\delta\lambda)_{k}(\beta_{1},\cdots\beta_{l},x_{1},\cdots x_{n-2k-l};\alpha_{1},\cdots\alpha_{k})\\
 & = & \sum_{i}\lambda_{k-1}(\alpha_{i},\beta_{1},\cdots\beta_{l},x_{1},\cdots x_{n-2k-l};\cdots\widehat{\alpha_{i}},\cdots)\\
 & = & \frac{1}{k+l}\sum_{i}(\omega_{k}-d_{0}\lambda_{k}-d^{\prime}\lambda_{k})(\beta_{1},\cdots\beta_{l},\cdots;\alpha_{1},\cdots\alpha_{k})\\
 &  & +\frac{1}{k+l}\sum_{i,j}(-1)^{j}(\omega_{k}-d_{0}\lambda_{k}-d^{\prime}\lambda_{k})(\alpha_{i},\beta_{1}\cdots\widehat{\beta_{j}},\cdots\beta_{l}\cdots;\beta_{j},\cdots\widehat{\alpha_{i}},\cdots)\\
 &  & \mbox{(By Lambda Condition 5) for \ensuremath{\lambda_{k}})}\\
 & = & \frac{k}{k+l}(\omega_{k}-d_{0}\lambda_{k}-d^{\prime}\lambda_{k})(\beta_{1},\cdots\beta_{l},\cdots;\alpha_{1},\cdots\alpha_{k})\\
 &  & +\frac{1}{k+l}\sum_{j}(-1)^{j}(-1)(\omega_{k}-d_{0}\lambda_{k}-d^{\prime}\lambda_{k})(\beta_{j},\beta_{1}\cdots\widehat{\beta_{j}},\cdots\beta_{l}\cdots;\alpha_{1},\cdots\alpha_{k})\\
 & = & \frac{k+l}{k+l}(\omega_{k}-d_{0}\lambda_{k}-d^{\prime}\lambda_{k})(\beta_{1},\cdots\beta_{l},\cdots;\alpha_{1},\cdots\alpha_{k})
\end{eqnarray*}

So $\omega_{k}=(d\lambda)_{k}$ for regular permutation $(\beta_{1},\cdots\beta_{l},x_{1},\cdots x_{n-2k-l})$.

Since $\omega_{k}$ and $(d\lambda)_{k}$ are both weakly skew-symmetric
up to $\omega_{k+1}=(d\lambda)_{k+1}$, so $\omega_{k}=(d\lambda)_{k}$
holds for general permutations.

Proof of Lambda Condition 5):

We only need to prove the following:
\begin{eqnarray*}
 &  & \sum_{i}(\omega_{k-1}-d_{0}\lambda_{k-1}-d^{\prime}\lambda_{k-1})(\alpha_{i},e_{1},\cdots,e_{n+1-2k};\alpha_{1},\cdots\widehat{\alpha_{i}},\cdots\alpha_{k})\\
 & = & (d\omega)_{k}(e_{1},\cdots,e_{n+1-2k};\alpha_{1},\cdots\alpha_{k})
\end{eqnarray*}

or equivalently,

\begin{eqnarray*}
 &  & (d_{0}\omega_{k}+d^{\prime}\omega_{k})(e_{1},\cdots e_{n+1-2k};\alpha_{1},\cdots\alpha_{k})+\sum_{i}(d_{0}\lambda_{k-1}+d^{\prime}\lambda_{k-1})(\alpha_{i},e_{1},\cdots;\cdots\widehat{\alpha_{i}},\cdots)\\
 & = & (d_{0}\omega_{k}+d^{\prime}\omega_{k})(e_{1},\cdots e_{n+1-2k};\alpha_{1},\cdots\alpha_{k})\\
 &  & +\sum_{i,a}(-1)^{a}\nabla_{e_{a}}\lambda_{k-1}(\alpha_{i},\cdots\widehat{e_{a}},\cdots;\cdots\widehat{\alpha_{i}},\cdots)+\sum_{i,a}(-1)\lambda_{k-1}(\cdots\widehat{e_{a}},\alpha_{i}\circ e_{a},\cdots;\cdots\widehat{\alpha_{i}},\cdots)\\
 &  & +\sum_{i,a<b}(-1)^{a+1}\lambda_{k-1}(\alpha_{i},\cdots\widehat{e_{a}},\cdots e_{a}\circ e_{b},\cdots;\cdots\widehat{\alpha_{i}},\cdots)+\sum_{j\neq i}\lambda_{k-1}(\cdots;\cdots\widehat{\alpha_{i}},\cdots\alpha_{j}\circ\alpha_{i},\cdots)\\
 &  & +\sum_{j\neq i}\lambda_{k-1}(\cdots;\cdots\widehat{\alpha_{i}},\cdots\alpha_{j}\circ\alpha_{i},\cdots)+\sum_{a,j\neq i}(-1)^{a}\lambda_{k-1}(\alpha_{i},\cdots\widehat{e_{a}},\cdots;\cdots\widehat{\alpha_{i}},\cdots\alpha_{j}\circ e_{a},\cdots)\\
 & = & (d_{0}\omega_{k}+d^{\prime}\omega_{k})(e_{1},\cdots e_{n+1-2k};\alpha_{1},\cdots\alpha_{k})+\sum_{a}(-1)^{a}\nabla_{e_{a}}(\omega_{k}-d_{0}\lambda_{k}-d^{\prime}\lambda_{k})(\cdots\widehat{e_{a}},\cdots;\cdots)\\
 &  & +\sum_{a<b}(-1)^{a+1}(\omega_{k}-d_{0}\lambda_{k}-d^{\prime}\lambda_{k})(\cdots\widehat{e_{a}},\cdots e_{a}\circ e_{b},\cdots)-\sum_{a,i}\lambda_{k-1}(\cdots\widehat{e_{a}},\alpha_{i}\circ e_{a},\cdots;\cdots\widehat{\alpha_{i}},\cdots)\\
 &  & +\sum_{i<j}\lambda_{k-1}(e_{1},\cdots e_{n+1-2k};\alpha_{i}\circ\alpha_{j}+\alpha_{j}\circ\alpha_{i},\cdots\widehat{\alpha_{i}},\cdots\widehat{\alpha_{j}},\cdots)\\
 &  & +\big(\sum_{a,j}(-1)^{a}(\omega_{k}-d_{0}\lambda_{k}-d^{\prime}\lambda_{k})(\cdots\widehat{e_{a}},\cdots;\cdots\alpha_{j}\circ e_{a},\cdots)\\
 &  & \quad-\sum_{a,j}(-1)^{a}\lambda_{k-1}(\alpha_{j}\circ e_{a},\cdots\widehat{e_{a}},\cdots;\cdots\widehat{\alpha_{j}},\cdots)\big)\\
 & = & ((d_{0}^{2}+d_{0}\circ d^{\prime})\lambda)_{k}(e_{1},\cdots;\alpha_{1},\cdots)-\sum_{a,i}\lambda_{k-1}(\cdots\widehat{e_{a}},\alpha_{i}\circ e_{a},\cdots;\cdots\widehat{\alpha_{i}},\cdots)\\
 &  & +((d^{\prime}\circ d_{0}+d^{\prime2})\lambda)_{k}(e_{1},\cdots;\alpha_{1},\cdots)+\sum_{a,j}(-1)^{a+1}\lambda_{k-1}(\alpha_{j}\circ e_{a},\cdots\widehat{e_{a}},\cdots;\cdots\widehat{\alpha_{j}},\cdots)\\
 & = & ((d_{0}+d^{\prime}+\delta)^{2}\lambda)_{k}(e_{1},\cdots e_{n+1-2k};\alpha_{1},\cdots\alpha_{k})\\
 & = & 0
\end{eqnarray*}

Thus $\lambda_{k-1}$ satisfies all Lambda Conditions.

Step 4:

By mathematical induction, finally we obtain $(\lambda_{0},\cdots,\lambda_{m-1})$
with each $\lambda_{p}$ satisfying Lambda Conditions. 

Let $\lambda\triangleq(\lambda_{0},\cdots,\lambda_{m-1})$, Lambda
Conditions 1), 2), 3) imply that $\lambda$ is a cochain in $C^{n-1}(\mathcal{E},\mathcal{H},\mathcal{V})$.

Let $\eta\triangleq\omega-d\lambda\in C^{n}(\mathcal{E},\mathcal{H},\mathcal{V})$. 

By Lambda Condition 4), $\eta=(\omega_{0}-(d\lambda)_{0},0,\cdots,0)=(\omega_{0}-d_{0}\lambda_{0}-d^{\prime}\lambda_{0},0,\cdots,0)$. 

By Lambda Condition 5), $(\omega_{0}-d_{0}\lambda_{0}-d^{\prime}\lambda_{0})(\alpha,e_{1},\cdots,e_{n-1})=0,\ \forall\alpha\in\mathcal{H},\ e_{i}\in\mathcal{E}.$

So $\iota_{\alpha}\eta_{0}=\iota_{\alpha}(\omega_{0}-d_{0}\lambda_{0}-d^{\prime}\lambda_{0})=0,\ \forall\alpha\in\mathcal{H}$,
which implies that $\eta\in C_{nv}^{n}(\mathcal{E},\mathcal{H},\mathcal{V})$.

Thus $\omega=\eta+d\lambda,\ \eta\in C_{nv}^{n}(\mathcal{E},\mathcal{H},\mathcal{V}),\ \lambda\in C^{n-1}(\mathcal{E},\mathcal{H},\mathcal{V})$,
the proof is finished.
\end{proof}
Now we turn to the proof of Theorem \ref{thm:isomorphism CDA}:
\begin{proof}
By Lemma \ref{lem: CE nv}, we only need to prove that the inclusion
map
\[
\psi:C_{nv}(\mathcal{E},\mathcal{H},\mathcal{V})\rightarrow C(\mathcal{E},\mathcal{H},\mathcal{V})
\]
induces an isomorphism
\[
\psi_{*}:H_{nv}^{\bullet}(\mathcal{E},\mathcal{H},\mathcal{V})\rightarrow H^{\bullet}(\mathcal{E},\mathcal{H},\mathcal{V}).
\]

1). $\psi_{*}$ is surjective.

Given any $[\omega]\in H^{n}(\mathcal{E},\mathcal{H},\mathcal{V})$,
since $\omega$ is closed, by Lemma \ref{lem:lambda}, there exists
$\eta\in C_{nv}^{n}(\mathcal{E},\mathcal{H},\mathcal{V})$ and $\lambda\in C^{n-1}(\mathcal{E},\mathcal{H},\mathcal{V})$
such that $\omega=\eta+d\lambda$. We see that $\eta=\omega-d\lambda$
is closed, so $\psi_{*}([\eta])=[\eta]=[\omega]$.

2). $\psi_{*}$ is injective.

Suppose $\zeta\in C_{nv}^{n}(\mathcal{E},\mathcal{H},\mathcal{V})$
is exact in $C^{n}(\mathcal{E},\mathcal{H},\mathcal{V})$, i.e. $\exists\omega\in C^{n-1}(\mathcal{E},\mathcal{H},\mathcal{V}),\ d\omega=\zeta$,
we need to prove that $\zeta$ is exact in $C_{nv}^{n}(\mathcal{E},\mathcal{H},\mathcal{V})$.

Since $(d\omega)_{k}=\zeta_{k}=0,\ \forall k>0$, by Lemma \ref{lem:lambda},
there exists $\eta\in C_{nv}^{n-1}(\mathcal{E},\mathcal{H},\mathcal{V})$
and $\lambda\in C^{n-2}(\mathcal{E},\mathcal{H},\mathcal{V})$ such
that $\omega=\eta+d\lambda$. So 
\[
\zeta=d\omega=d\eta
\]
 is exact in $C_{nv}^{n}(\mathcal{E},\mathcal{H},\mathcal{V})$.

The proof is finished.\end{proof}
\begin{rem}
When $\rho^{*}$ is injective (in this case we call $\mathcal{E}$
a transitive Courant-Dorfman algebra), and $\mathcal{H}=\rho^{*}(\Omega^{1}),\ \mathcal{V}=R$,
Definition \ref{Def:generalized standard cohomology CDA} recovers
the ordinary standard cohomology (Definition \ref{Def:standard cohomology CDA}).
Moreover, if $\mathcal{E}=\Gamma(E)$ is the space of sections of
a transitive Courant algebroid $E$, Theorem \ref{thm:isomorphism CDA}
recovers the isomorphism between the standard cohomology and naive
cohomology of $E$, as conjectured by Stienon-Xu \cite{StienonXu08}
and first proved by Ginot-Grutzmann \cite{GinotGrutz08}. So Theorem
\ref{thm:isomorphism CDA} is a generalization of their result.\end{rem}
\begin{example}
Suppose $\mathcal{G}$ is a bundle of quadratic Lie algebras on $M$.
Given a standard Courant algebroid structure (see Chen, Stienon and
Xu \cite{ChenSX}) on 
\[
E=TM\oplus\mathcal{G}\oplus T^{*}M,
\]
let $\mathcal{E}=\Gamma(E)$. As mentioned in the remark above, if
we take $\mathcal{H}=\Gamma(T^{*}M)=\Omega^{1}(M)$ and $\mathcal{V}=C^{\infty}(M)$,
the $\Omega^{1}(M)$-standard cohomology $H^{\bullet}(\mathcal{E},\Omega^{1}(M),C^{\infty}(M))$
coincides with the standard cohomology of $E$, and is isomorphic
to the cohomology of Lie algebroid $TM\oplus\mathcal{G}$ with coefficients
in $C^{\infty}(M)$. 

Now suppose $\mathcal{K}$ is an isotropic ideal in $\mathcal{G}$,
then $\mathcal{H}=\Gamma(\mathcal{K}\oplus T^{*}M)\supseteq\Gamma(T^{*}M)$
is an isotropic ideal in $\mathcal{E}$. Given a $\Gamma(\mathcal{K}\oplus T^{*}M)$-representation
$\mathcal{V}$ (e.g. $C^{\infty}(M)$), we have the $\Gamma(\mathcal{K}\oplus T^{*}M)$-standard
cohomology $H^{\bullet}(\mathcal{E},\Gamma(\mathcal{K}\oplus T^{*}M),C^{\infty}(M))$.
By Theorem \ref{thm:isomorphism CDA}, it is isomorphic to the cohomology
of Lie algebroid $TM\oplus(\mathcal{G}/\mathcal{K})$ with coefficients
in $\mathcal{V}$.
\end{example}

\section{Crossed products of Leibniz algebras}

In this section, we associate a Courant-Dorfman algebra to any Leibniz
algebra and consider the relation between $H$-standard complexes
of them. At last we prove an isomorphism theorem for Leibniz algebras.

Given a Leibniz algebra $L$ with left center $Z$, let $S^{\bullet}(Z)$
be the algebra of symmetric tensors of $Z$. We construct a Courant-Dorfman
algebra structure on the tensor product
\[
\mathcal{L}\triangleq S^{\bullet}(Z)\otimes L
\]
 as follows:

let $R$ be $S^{\bullet}(Z)$;

let the $S^{\bullet}(Z)$-module structure of $\mathcal{L}$ be given
by multiplication of $S^{\bullet}(Z)$, i.e. 
\[
f_{1}\cdot(f_{2}\otimes e)\triangleq(f_{1}f_{2})\otimes e,\qquad\forall f_{1},f_{2}\in S^{\bullet}(Z),e\in L;
\]

(For simplicity, we will write $f\otimes e$ as $fe$ from now on.)

let the symmetric bilinear form $\langle\cdot,\cdot\rangle$ of $\mathcal{L}$
be the $S^{\bullet}(Z)$-bilinear extension of the symmetric product
$(\cdot,\cdot)$ of $L$, i.e. 
\[
\langle f_{1}e_{1},f_{2}e_{2}\rangle\triangleq f_{1}f_{2}(e_{1},e_{2}),\qquad\forall f_{1},f_{2}\in S^{\bullet}(Z),e_{1},e_{2}\in L
\]
(since $\langle e_{1},e_{2}\rangle=(e_{1},e_{2})$, in the following
we always use the notation $\langle\cdot,\cdot\rangle$); 

let the derivation $\partial:\ S^{\bullet}(Z)\rightarrow\mathcal{L}$
be the extension of the inclusion map $Z\hookrightarrow L$ by Leibniz
rule, i.e.
\[
\partial(z_{1}\cdots z_{k})\triangleq\sum_{1\leq i\leq k}(z_{1}\cdots\widehat{z_{i}}\cdots z_{k})\partial z_{i},\qquad\forall z_{i}\in Z;
\]

let the Dorfman bracket on $\mathcal{L}$, still denoted by $\circ$,
be the extension of the Leibniz bracket of $L$:
\begin{equation}
f_{1}e_{1}\circ f_{2}e_{2}\triangleq f_{1}f_{2}(e_{1}\circ e_{2})+\langle e_{1},e_{2}\rangle f_{2}\partial f_{1}+\langle e_{1},\partial f_{2}\rangle f_{1}e_{2}-\langle e_{2},\partial f_{1}\rangle f_{2}e_{1}\label{eq:bracket crossed product}
\end{equation}
$\forall f_{1},f_{2}\in S^{\bullet}(Z),e_{1},e_{2}\in L$.
\begin{prop}
With the above notations, $(\mathcal{L},S^{\bullet}(Z),\langle\cdot,\cdot\rangle,\partial,\circ)$
becomes a Courant-Dorfman algebra (called the crossed product of $L$).\label{prop: from Leibniz to CDA}\end{prop}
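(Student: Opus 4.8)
The plan is to verify directly the six conditions of Definition \ref{Def:Courant-Dorfman-algebra} for the data $(\mathcal{L},S^{\bullet}(Z),\langle\cdot,\cdot\rangle,\partial,\circ)$. First I would record that the bilinear form is $R$-bilinear and symmetric by construction, and that $\partial$ is a derivation of $R=S^{\bullet}(Z)$ into $\mathcal{L}$ because it was defined by Leibniz-rule extension. Two preliminary observations then organize the whole computation. Since $Z$ is the left center, $(z,z')=z\circ z'+z'\circ z=0$ for $z,z'\in Z$, so $Z$ is isotropic; as $\langle\cdot,\cdot\rangle$ is the $S^{\bullet}(Z)$-bilinear extension of $(\cdot,\cdot)$ and $\partial f,\partial g$ are $R$-combinations of elements of $Z\hookrightarrow L$, condition (6) $\langle\partial f,\partial g\rangle=0$ is immediate. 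Second, I would compute the anchor: for $e\in L$ and $w\in Z$ one has $\rho(e)w=\langle e,\partial w\rangle=\langle e,w\rangle=e\circ w$, so $\rho(e)$ is precisely the unique derivation of $S^{\bullet}(Z)$ extending the induced action $w\mapsto e\circ w$ of $L$ on $Z$, while $\rho(z)=0$ for $z\in Z$. These two facts are used throughout.

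With them in hand, conditions (1), (2), (3) are short substitutions. Condition (1) is essentially how (\ref{eq:bracket crossed product}) was engineered: replacing the second argument $f_2e_2$ by $(gf_2)e_2$ and expanding $\partial(gf_2)=g\partial f_2+f_2\partial g$, the extra terms reproduce exactly $g(f_1e_1\circ f_2e_2)+\langle f_1e_1,\partial g\rangle f_2e_2$. For (3) I would add $f_1e_1\circ f_2e_2$ and $f_2e_2\circ f_1e_1$ from (\ref{eq:bracket crossed product}); the two anchor corrections cancel pairwise and the remainder is $f_1f_2(e_1,e_2)+\langle e_1,e_2\rangle(f_1\partial f_2+f_2\partial f_1)$, which is $\partial(f_1f_2\langle e_1,e_2\rangle)=\partial\langle f_1e_1,f_2e_2\rangle$ by the derivation property of $\partial$. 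For (2) the left side is $\rho(f_1e_1)\langle f_2e_2,f_3e_3\rangle$; expanding it via the derivation property of $\rho(e_1)$ and the anchor formula $\rho(e)f=\langle e,\partial f\rangle$, and expanding the right side via (\ref{eq:bracket crossed product}), the two match, the essential input being the invariance $\rho(e_1)(e_2,e_3)=(e_1\circ e_2,e_3)+(e_2,e_1\circ e_3)$ of $(\cdot,\cdot)$ on $L$ recorded in Section 3.

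For condition (5), $\partial f\circ e=0$, I would first treat the degree-one case $\partial z\circ f_2e_2=\langle z,\partial f_2\rangle e_2=0$, using $z\circ e_2=0$ and isotropy of $Z$. For general $f=z_1\cdots z_k$ write $\partial f=\sum_i g_iz_i$ with $g_i=\prod_{j\ne i}z_j$; expanding each $g_iz_i\circ f_2e_2$ through (\ref{eq:bracket crossed product}) leaves $f_2\sum_i\big((e_2\circ z_i)\partial g_i-(\rho(e_2)g_i)z_i\big)$. Because both $\partial$ and $\rho(e_2)$ act as derivations and the coefficients $\prod_{l\ne i,j}z_l$ are symmetric in $i,j$, the two resulting double sums coincide after relabelling and cancel.

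The main obstacle is condition (4), the Leibniz identity for the Dorfman bracket. As (\ref{eq:bracket crossed product}) is a sum of four terms, a naive expansion of the three triple brackets produces many summands. My strategy is to check the identity on generators $e_i\in L$, where (\ref{eq:bracket crossed product}) collapses to the Leibniz bracket of $L$ and the identity is exactly the Leibniz rule of $L$, and then to propagate to general arguments $f_ie_i$ by multilinearity using (1) together with its first-slot consequence $(fa)\circ b=f(a\circ b)-\langle b,\partial f\rangle a+\langle a,b\rangle\partial f$. The point of difficulty is that this propagation generates numerous scalar and anchor corrections; showing that they cancel requires repeated use of (2), (5), (6) and of the fact that each $\rho(e)$ acts by derivations on $S^{\bullet}(Z)$. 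This bookkeeping is routine but lengthy, and is where essentially all of the work lies.
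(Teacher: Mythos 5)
Your proposal is correct and takes essentially the same route as the paper: direct verification of the six axioms of Definition \ref{Def:Courant-Dorfman-algebra}, with the Leibniz identity checked first on generators of $L$ (where the bracket of $\mathcal{L}$ restricts to that of $L$) and then propagated slot by slot via condition (1) and the first-slot formula $(fa)\circ b=f(a\circ b)+\langle a,b\rangle\partial f-\langle b,\partial f\rangle a$, which is exactly the staged computation ($f_2=f_3=1$, then $f_3=1$, then the general case) that the paper carries out in full. Your arguments for conditions (5) and (6) --- the double-sum relabelling and the observation that isotropy of $Z$ plus $R$-bilinearity give $\langle\partial f,\partial g\rangle=0$ immediately --- are mild streamlinings of the paper's monomial inductions, and the cancellation bookkeeping you defer for condition (4) is indeed where the bulk of the paper's proof lies and closes up as you describe.
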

\begin{proof}
We need to check all the six conditions in Definition \ref{Def:Courant-Dorfman-algebra}.

1). $f_{1}e_{1}\circ f(f_{2}e_{2})=f(f_{1}e_{1}\circ f_{2}e_{2})+\langle f_{1}e_{1},\partial f\rangle f_{2}e_{2}$
\begin{eqnarray*}
 &  & f_{1}e_{1}\circ f(f_{2}e_{2})\\
 & = & ff_{1}f_{2}(e_{1}\circ e_{2})+\langle e_{1},e_{2}\rangle ff_{2}\partial f_{1}+\langle e_{1},\partial(ff_{2})\rangle f_{1}e_{2}-\langle e_{2},\partial f_{1}\rangle ff_{2}e_{1}\\
 & = & ff_{1}f_{2}(e_{1}\circ e_{2})+\langle e_{1},e_{2}\rangle ff_{2}\partial f_{1}+f\langle e_{1},\partial f_{2}\rangle f_{1}e_{2}+f_{2}\langle e_{1},\partial f\rangle f_{1}e_{2}-\langle e_{2},\partial f_{1}\rangle ff_{2}e_{1}\\
 & = & f(f_{1}e_{1}\circ f_{2}e_{2})+\langle f_{1}e_{1},\partial f\rangle f_{2}e_{2}.
\end{eqnarray*}

2). $\langle f_{1}e_{1},\partial\langle f_{2}e_{2},f_{3}e_{3}\rangle\rangle=\langle f_{1}e_{1}\circ f_{2}e_{2},f_{3}e_{3}\rangle+\langle f_{2}e_{2},f_{1}e_{1}\circ f_{3}e_{3}\rangle$
\begin{eqnarray*}
 &  & \langle f_{1}e_{1},\partial\langle f_{2}e_{2},f_{3}e_{3}\rangle\rangle\\
 & = & f_{1}\langle e_{1},\partial(f_{2}f_{3}\langle e_{2},e_{3}\rangle)\rangle\\
 & = & f_{1}f_{2}f_{3}\langle e_{1},\partial\langle e_{2},e_{3}\rangle\rangle+f_{1}f_{2}\langle e_{2},e_{3}\rangle\langle e_{1},\partial f_{3}\rangle+f_{1}f_{3}\langle e_{2},e_{3}\rangle\langle e_{1},\partial f_{2}\rangle\\
 & = & f_{1}f_{2}f_{3}\big(\langle e_{1}\circ e_{2},e_{3}\rangle+\langle e_{2},e_{1}\circ e_{3}\rangle\big)+f_{1}f_{2}\langle e_{2},e_{3}\rangle\langle e_{1},\partial f_{3}\rangle+f_{1}f_{3}\langle e_{2},e_{3}\rangle\langle e_{1},\partial f_{2}\rangle\\
 & = & f_{1}f_{2}f_{3}\langle e_{1}\circ e_{2},e_{3}\rangle+f_{2}f_{3}\langle e_{1},e_{2}\rangle\langle e_{3},\partial f_{1}\rangle+f_{1}f_{3}\langle e_{2},e_{3}\rangle\langle e_{1},\partial f_{2}\rangle-f_{2}f_{3}\langle e_{1},e_{3}\rangle\langle e_{2},\partial f_{1}\rangle\\
 &  & +f_{1}f_{2}f_{3}\langle e_{2},e_{1}\circ e_{3}\rangle+f_{2}f_{3}\langle e_{1},e_{3}\rangle\langle e_{2},\partial f_{1}\rangle+f_{1}f_{2}\langle e_{2},e_{3}\rangle\langle e_{1},\partial f_{3}\rangle-f_{2}f_{3}\langle e_{1},e_{2}\rangle\langle e_{3},\partial f_{1}\rangle\\
 & = & \langle f_{1}f_{2}(e_{1}\circ e_{2})+\langle e_{1},e_{2}\rangle f_{2}\partial f_{1}+\langle e_{1},\partial f_{2}\rangle f_{1}e_{2}-\langle e_{2},\partial f_{1}\rangle f_{2}e_{1},f_{3}e_{3}\rangle\\
 &  & +\langle f_{2}e_{2},f_{1}f_{3}(e_{1}\circ e_{3})+\langle e_{1},e_{3}\rangle f_{3}\partial f_{1}+\langle e_{1},\partial f_{3}\rangle f_{1}e_{3}-\langle e_{3},\partial f_{1}\rangle f_{3}e_{1}\rangle\\
 & = & \langle f_{1}e_{1}\circ f_{2}e_{2},f_{3}e_{3}\rangle+\langle f_{2}e_{2},f_{1}e_{1}\circ f_{3}e_{3}\rangle.
\end{eqnarray*}

3). $f_{1}e_{1}\circ f_{2}e_{2}+f_{2}e_{2}\circ f_{1}e_{1}=\partial\langle f_{1}e_{1},f_{2}e_{2}\rangle$
\begin{eqnarray*}
 &  & f_{1}e_{1}\circ f_{2}e_{2}+f_{2}e_{2}\circ f_{1}e_{1}\\
 & = & f_{1}f_{2}(e_{1}\circ e_{2})+\langle e_{1},e_{2}\rangle f_{2}\partial f_{1}+\langle e_{1},\partial f_{2}\rangle f_{1}e_{2}-\langle e_{2},\partial f_{1}\rangle f_{2}e_{1}\\
 &  & +f_{1}f_{2}(e_{2}\circ e_{1})+\langle e_{1},e_{2}\rangle f_{1}\partial f_{2}+\langle e_{2},\partial f_{1}\rangle f_{2}e_{1}-\langle e_{1},\partial f_{2}\rangle f_{1}e_{2}\\
 & = & f_{1}f_{2}\partial\langle e_{1},e_{2}\rangle+\langle e_{1},e_{2}\rangle f_{2}\partial f_{1}+\langle e_{1},e_{2}\rangle f_{1}\partial f_{2}\\
 & = & \partial\langle f_{1}e_{1},f_{2}e_{2}\rangle.
\end{eqnarray*}

Combining 1) and 3), we get the following:

\begin{eqnarray*}
 &  & f(f_{1}e_{1})\circ f_{2}e_{2}\\
 & = & (f(f_{1}e_{1})\circ f_{2}e_{2}+f_{2}e_{2}\circ f(f_{1}e_{1}))-f_{2}e_{2}\circ f(f_{1}e_{1})\\
 & = & \partial\langle f(f_{1}e_{1}),f_{2}e_{2}\rangle-(f(f_{2}e_{2}\circ f_{1}e_{1})+\langle f_{2}e_{2},\partial f\rangle f_{1}e_{1}\\
 & = & \langle f_{1}e_{1},f_{2}e_{2}\rangle\partial f+f\partial\langle f_{1}e_{1},f_{2}e_{2}\rangle-f(f_{2}e_{2}\circ f_{1}e_{1})-\langle f_{2}e_{2},\partial f\rangle f_{1}e_{1}\\
 & = & f(f_{1}e_{1}\circ f_{2}e_{2})+\langle f_{1}e_{1},f_{2}e_{2}\rangle\partial f-\langle f_{2}e_{2},\partial f\rangle f_{1}e_{1}
\end{eqnarray*}

4). $\langle\partial f,\partial f^{\prime}\rangle=0$.

We only need to consider the case of monomials, suppose
\[
f=z_{1}z_{2}\cdots z_{k},\ f^{\prime}=z_{1}^{\prime}z_{2}^{\prime}\cdots z_{l}^{\prime},\ z_{i},z_{j}^{\prime}\in Z,
\]
\begin{eqnarray*}
 &  & \langle\partial f,\partial f^{\prime}\rangle\\
 & = & \langle\sum_{i}(z_{1}\cdots\widehat{z_{i}}\cdots z_{k})\partial z_{i},\sum_{j}(z_{1}^{\prime}\cdots\widehat{z_{j}^{\prime}}\cdots z_{l}^{\prime})\partial z_{j}^{\prime}\rangle\\
 & = & \sum_{i,j}(z_{1}\cdots\widehat{z_{i}}\cdots z_{k}z_{1}^{\prime}\cdots\widehat{z_{j}^{\prime}}\cdots z_{l}^{\prime})(\partial z_{i}\circ\partial z_{j}^{\prime}+\partial z_{j}^{\prime}\circ\partial z_{i})\\
 & = & 0
\end{eqnarray*}

5). $\partial f\circ(f^{\prime}e)=0$

First we prove that $\partial f\circ e=0,\ \forall f\in S^{\bullet}(Z),\ e\in L$.

We only need to consider the case of monomials: suppose $f=z_{1}z_{2}\cdots z_{k},\ z_{i}\in Z$.

When $k=1$, i.e. $f=z_{1}\in Z$, the equation is trivial.

Now suppose the equation holds for any $k\leq m$, let's consider
the case of $k=m+1$.

\begin{eqnarray*}
 &  & \partial(z_{1}z_{2}\cdots z_{m+1})\circ e\\
 & = & \big((z_{1}\cdots z_{m})\partial z_{m+1}+z_{m+1}\partial(z_{1}\cdots z_{m})\big)\circ e\\
 & = & (z_{1}\cdots z_{m})(\partial z_{m+1}\circ e)+\langle\partial z_{m+1},e\rangle\partial(z_{1}\cdots z_{m})-\langle e,\partial(z_{1}\cdots z_{m})\rangle\partial z_{m+1}\\
 &  & +z_{m+1}(\partial(z_{1}\cdots z_{m})\circ e)+\langle\partial(z_{1}\cdots z_{m}),e\rangle\partial z_{m+1}-\langle e,\partial z_{m+1}\rangle\partial(z_{1}\cdots z_{m})\\
 & = & 0
\end{eqnarray*}

Thus by induction, $\partial f\circ e=0$ holds for any $f\in S^{\bullet}(Z)$.

Then combining 1) and 4),

\begin{eqnarray*}
\partial f\circ(f^{\prime}e) & = & f^{\prime}(\partial f\circ e)+\langle\partial f,\partial f^{\prime}\rangle e=0
\end{eqnarray*}

6). $f_{1}e_{1}\circ(f_{2}e_{2}\circ f_{3}e_{3})=(f_{1}e_{1}\circ f_{2}e_{2})\circ f_{3}e_{3}+f_{2}e_{2}\circ(f_{1}e_{1}\circ f_{3}e_{3})$

First we prove the equation for the case when $f_{2}=f_{3}=1$:

\begin{eqnarray*}
 &  & (f_{1}e_{1}\circ e_{2})\circ e_{3}+e_{2}\circ(f_{1}e_{1}\circ e_{3})\\
 & = & \big(f_{1}(e_{1}\circ e_{2})+\langle e_{1},e_{2}\rangle\partial f_{1}-\langle e_{2},\partial f_{1}\rangle e_{1}\big)\circ e_{3}+e_{2}\circ\big(f_{1}(e_{1}\circ e_{3})+\langle e_{1},e_{3}\rangle\partial f_{1}-\langle e_{3},\partial f_{1}\rangle e_{1}\big)\\
 & = & \big(f_{1}((e_{1}\circ e_{2})\circ e_{3})+\langle e_{1}\circ e_{2},e_{3}\rangle\partial f_{1}-\langle e_{3},\partial f_{1}\rangle(e_{1}\circ e_{2})\big)\\
 &  & +\big(\langle e_{1},e_{2}\rangle(\partial f_{1}\circ e_{3})+\langle\partial f_{1},e_{3}\rangle\partial\langle e_{1},e_{2}\rangle-\langle e_{3},\partial\langle e_{1},e_{2}\rangle\rangle\partial f_{1}\big)\\
 &  & -\big(\langle e_{2},\partial f_{1}\rangle(e_{1}\circ e_{3})+\langle e_{1},e_{3}\rangle\partial\langle e_{2},\partial f_{1}\rangle-\langle e_{3},\partial\langle e_{2},\partial f_{1}\rangle\rangle e_{1}\big)\\
 &  & +\big(f_{1}(e_{2}\circ(e_{1}\circ e_{3})+\langle e_{2},\partial f_{1}\rangle(e_{1}\circ e_{3})\big)+\big(\langle e_{1},e_{3}\rangle(e_{2}\circ\partial f_{1})+\langle e_{2},\partial\langle e_{1},e_{3}\rangle\rangle\partial f_{1}\big)\\
 &  & -\big(\langle e_{3},\partial f_{1}\rangle(e_{2}\circ e_{1})+\langle e_{2},\partial\langle e_{3},\partial f_{1}\rangle\rangle e_{1}\big)\\
 & = & f_{1}((e_{1}\circ e_{2})\circ e_{3})+f_{1}(e_{2}\circ(e_{1}\circ e_{3})+\big(\langle e_{1}\circ e_{2},e_{3}\rangle-\langle e_{3},\partial\langle e_{1},e_{2}\rangle\rangle+\langle e_{2},\partial\langle e_{1},e_{3}\rangle\rangle\big)\partial f_{1}\\
 &  & +\big(\langle e_{3},\partial\langle e_{2},\partial f_{1}\rangle\rangle-\langle e_{2},\partial\langle e_{3},\partial f_{1}\rangle\rangle\big)e_{1}+\langle e_{1},e_{3}\rangle\big(e_{2}\circ\partial f_{1}-\partial\langle e_{2},\partial f_{1}\rangle\big)\\
 &  & +\langle e_{3},\partial f_{1}\rangle\big(\partial\langle e_{1},e_{2}\rangle-e_{1}\circ e_{2}-e_{2}\circ e_{1}\big)+\langle e_{1},e_{2}\rangle(\partial f_{1}\circ e_{3})\\
 &  & +\langle e_{2},\partial f_{1}\rangle(e_{1}\circ e_{3})-\langle e_{2},\partial f_{1}\rangle(e_{1}\circ e_{3})\\
 & = & f_{1}(e_{1}\circ(e_{2}\circ e_{3}))+\big(\langle e_{2},\partial\langle e_{1},e_{3}\rangle\rangle-\langle e_{2}\circ e_{1},e_{3}\rangle\big)\partial f_{1}-\langle e_{2}\circ e_{3},\partial f_{1}\rangle e_{1}\\
 & = & f_{1}e_{1}\circ(e_{2}\circ e_{3})
\end{eqnarray*}

Then we prove the equation for the case when $f_{3}=1$:

\begin{eqnarray*}
 &  & (f_{1}e_{1}\circ f_{2}e_{2})\circ e_{3}+f_{2}e_{2}\circ(f_{1}e_{1}\circ e_{3})\quad\mbox{(let \ensuremath{x_{1}\triangleq f_{1}e_{1}})}\\
 & = & \big(f_{2}(x_{1}\circ e_{2})+\langle x_{1},\partial f_{2}\rangle e_{2}\big)\circ e_{3}+f_{2}(e_{2}\circ(x_{1}\circ e_{3}))+\langle e_{2},x_{1}\circ e_{3}\rangle\partial f_{2}-\langle x_{1}\circ e_{3},\partial f_{2}\rangle e_{2}\\
 & = & \big(f_{2}((x_{1}\circ e_{2})\circ e_{3})+\langle x_{1}\circ e_{2},e_{3}\rangle\partial f_{2}-\langle e_{3},\partial f_{2}\rangle(x_{1}\circ e_{2})\big)\\
 &  & +\big(\langle x_{1},\partial f_{2}\rangle(e_{2}\circ e_{3})+\langle e_{2},e_{3}\rangle\partial\langle x_{1},\partial f_{2}\rangle-\langle e_{3},\partial\langle x_{1},\partial f_{2}\rangle\rangle e_{2}\big)\\
 &  & +f_{2}(e_{2}\circ(x_{1}\circ e_{3}))+\langle e_{2},x_{1}\circ e_{3}\rangle\partial f_{2}-\langle x_{1}\circ e_{3},\partial f_{2}\rangle e_{2}\\
 & = & f_{2}((x_{1}\circ e_{2})\circ e_{3})+f_{2}(e_{2}\circ(x_{1}\circ e_{3}))+\langle x_{1},\partial f_{2}\rangle(e_{2}\circ e_{3})+\langle e_{2},e_{3}\rangle\partial\langle x_{1},\partial f_{2}\rangle\\
 &  & +\big(\langle x_{1}\circ e_{2},e_{3}\rangle+\langle e_{2},x_{1}\circ e_{3}\rangle\big)\partial f_{2}-\big(\langle e_{3},\partial f_{2}\rangle(x_{1}\circ e_{2})+(\langle e_{3},\partial\langle x_{1},\partial f_{2}\rangle\rangle+\langle x_{1}\circ e_{3},\partial f_{2}\rangle)e_{2}\big)\\
 & = & x_{1}\circ\big(f_{2}(e_{2}\circ e_{3})+\langle e_{2},e_{3}\rangle\partial f_{2}-\langle e_{3},\partial f_{2}\rangle e_{2}\big)\\
 & = & f_{1}e_{1}\circ(f_{2}e_{2}\circ e_{3}).
\end{eqnarray*}

Finally,

\begin{eqnarray*}
 &  & (f_{1}e_{1}\circ f_{2}e_{2})\circ f_{3}e_{3}+f_{2}e_{2}\circ(f_{1}e_{1}\circ f_{3}e_{3})\quad\mbox{(let \ensuremath{x_{1}\triangleq f_{1}e_{1},\ x_{2}\triangleq f_{2}e_{2}})}\\
 & = & f((x_{1}\circ x_{2})\circ e_{3})+\langle x_{1}\circ x_{2},\partial f_{3}\rangle e_{3}\\
 &  & +f(x_{2}\circ(x_{1}\circ e_{3}))+\langle x_{2},\partial f_{3}\rangle(x_{1}\circ e_{3})+\langle x_{1},\partial f_{3}\rangle(x_{2}\circ e_{3})+\langle x_{2},\partial\langle x_{1},\partial f_{3}\rangle\rangle e_{3}\\
 & = & x_{1}\circ\big(f_{3}(x_{2}\circ e_{3})+\langle x_{2},\partial f_{3}\rangle e_{3}\big)\\
 & = & f_{1}e_{1}\circ(f_{2}e_{2}\circ f_{3}e_{3})
\end{eqnarray*}

Thus the proposition is proved.
\end{proof}
By Equation \ref{eq:anchor CDA}, the anchor map
\[
\rho:\mathcal{L}\rightarrow Der(S^{\bullet}(Z),S^{\bullet}(Z))
\]
can be defined as follows:
\[
\rho(fe)(z_{1}\cdots z_{k})\triangleq f\sum_{i}(z_{1}\cdots\widehat{z_{i}}\cdots z_{k})(\rho(e)z_{i}),\quad\forall f\in S^{\bullet}(Z),e\in L,z_{i}\in Z.
\]
 
\begin{prop}
Suppose $H\supseteq Z$ is an isotropic ideal in $L$, and $(V,\tau)$
is an $H$-representation of $L$, let $\mathcal{V}\triangleq S^{\bullet}(Z)\otimes V$,
then

1). $\mathcal{H}\triangleq S^{\bullet}(Z)\otimes H$ is an isotropic
ideal in $\mathcal{L}$

2). $(V,\tau)$ induces an $\mathcal{H}$-representation $(\mathcal{V},\nabla)$
of $\mathcal{L}$, where $\nabla:\mathcal{L}\rightarrow Der(\mathcal{V})$
is defined as follows:
\[
\nabla_{f_{1}e}(f_{2}v)\triangleq f_{1}\big(\langle e,\partial f_{2}\rangle v+f_{2}(\tau(e)v)\big),\ \forall f_{1},f_{2}\in S^{\bullet}(Z),\ e\in L,\ v\in V.
\]
\end{prop}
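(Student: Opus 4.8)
The plan is to verify the two claims separately, handling part (1) by direct substitution into the bracket formula and part (2) by checking the defining conditions of an $\mathcal{H}$-representation in increasing order of difficulty. For part (1), I would first note that $\mathcal{H}=S^{\bullet}(Z)\otimes H$ is visibly an $S^{\bullet}(Z)$-submodule of $\mathcal{L}$, and that it contains $\partial R$: applying $\partial$ to a monomial $z_{1}\cdots z_{k}$ produces a sum of terms $(z_{1}\cdots\widehat{z_{i}}\cdots z_{k})\partial z_{i}$ with each $\partial z_{i}\in Z\subseteq H$, so $\partial R\subseteq S^{\bullet}(Z)\otimes H=\mathcal{H}$. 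Isotropy is immediate from the definition of $\langle\cdot,\cdot\rangle$, since $\langle f_{1}a_{1},f_{2}a_{2}\rangle=f_{1}f_{2}(a_{1},a_{2})=0$ for $a_{1},a_{2}\in H$ because $H$ is isotropic in $L$. For the ideal property I would substitute $e_{2}=a\in H$ into Equation \ref{eq:bracket crossed product}: the term $f_{1}f_{2}(e\circ a)$ lies in $\mathcal{H}$ because $H$ is an ideal of $L$, the terms $\langle e,a\rangle f_{2}\partial f_{1}$ and $\langle e,\partial f_{2}\rangle f_{1}a$ lie in $\mathcal{H}$ because $\mathcal{H}$ is an $R$-module containing $\partial R$, and the remaining term $\langle a,\partial f_{1}\rangle f_{2}e$ vanishes because $a,\partial f_{1}\in\mathcal{H}$ and $\mathcal{H}$ is isotropic. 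This gives $\mathcal{L}\circ\mathcal{H}\subseteq\mathcal{H}$; the reverse inclusion $\mathcal{H}\circ\mathcal{L}\subseteq\mathcal{H}$ then follows from condition (3) of Definition \ref{Def:Courant-Dorfman-algebra} together with $\partial R\subseteq\mathcal{H}$.

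For part (2), I would first check that the formula for $\nabla$ is well defined, i.e. that $(f_{1},e)\mapsto\nabla_{f_{1}e}$ and $(f_{2},v)\mapsto\nabla_{f_{1}e}(f_{2}v)$ are each bilinear, so that $\nabla$ descends to the tensor products $\mathcal{L}$ and $\mathcal{V}$. I would then record two homomorphism facts used repeatedly: $\tau$ is a homomorphism of Leibniz algebras by hypothesis, and the anchor $\rho$ of any Courant-Dorfman algebra satisfies $\rho(x_{1}\circ x_{2})=[\rho(x_{1}),\rho(x_{2})]$ as derivations of $R$ (provable from conditions (2),(3),(5) of Definition \ref{Def:Courant-Dorfman-algebra}, via the identity $x_{1}\circ\partial f=\partial(\rho(x_{1})f)$). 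With these in hand the three easier conditions are short: $R$-linearity $\nabla_{gx}v=g\nabla_{x}v$ is immediate from the clean factor $f_{1}$ in the formula; the covariance rule $\nabla_{x}(gv)=(\rho(x)g)v+g\nabla_{x}v$ follows from $\partial(gf_{2})=g\partial f_{2}+f_{2}\partial g$ and $\langle e,\partial g\rangle=\rho(e)g$ (Equation \ref{eq:anchor CDA}); and triviality $\nabla_{\alpha}v=0$ for $\alpha=f_{1}a\in\mathcal{H}$ holds because $\tau(a)=0$ (as $(V,\tau)$ is an $H$-representation) and $\langle a,\partial f_{2}\rangle=0$ by isotropy of $\mathcal{H}$.

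The main obstacle is the Leibniz homomorphism identity $\nabla_{x_{1}\circ x_{2}}=[\nabla_{x_{1}},\nabla_{x_{2}}]$, which I would verify on $x_{1}=f_{1}e_{1}$, $x_{2}=f_{2}e_{2}$, $v=f_{3}w$. On the left I expand $x_{1}\circ x_{2}$ using Equation \ref{eq:bracket crossed product}; the summand $\langle e_{1},e_{2}\rangle f_{2}\partial f_{1}$ is killed at once since $\partial f_{1}\in\mathcal{H}$ and $\nabla$ vanishes on $\mathcal{H}$, leaving three explicit terms. On the right I compute $\nabla_{x_{1}}\nabla_{x_{2}}(f_{3}w)-\nabla_{x_{2}}\nabla_{x_{1}}(f_{3}w)$ using the covariance rule together with the Leibniz rule for the derivation $\rho(e_{1})$ acting on products in $R$. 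Matching the two sides splits into three groups: the terms carrying $\tau(e_{1})\tau(e_{2})w$ and $\tau(e_{2})\tau(e_{1})w$ assemble into $f_{1}f_{2}f_{3}[\tau(e_{1}),\tau(e_{2})]w$ and match via $\tau(e_{1}\circ e_{2})=[\tau(e_{1}),\tau(e_{2})]$; the second-order $\rho$-terms applied to $w$ assemble into $f_{1}f_{2}[\rho(e_{1}),\rho(e_{2})]f_{3}\cdot w$ and match via the anchor homomorphism property; and the mixed terms, each carrying one factor of the shape $\rho(e_{i})f_{j}$ times a single $\tau$ or a single $\rho$, cancel in pairs, leaving exactly the two surviving cross contributions coming from the $\langle e_{1},\partial f_{2}\rangle f_{1}e_{2}$ and $\langle e_{2},\partial f_{1}\rangle f_{2}e_{1}$ summands on the left. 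This bookkeeping of cancellations is the only genuinely lengthy step; everything else is formal, and the conclusion is that $(\mathcal{V},\nabla)$ is an $\mathcal{H}$-representation of $\mathcal{L}$.
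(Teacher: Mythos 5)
Your proposal is correct and takes essentially the same route as the paper: direct verification of isotropy and the ideal property from Equation \ref{eq:bracket crossed product}, followed by checking the defining conditions of an $\mathcal{H}$-representation, with the Leibniz-homomorphism identity $\nabla_{x_{1}\circ x_{2}}=[\nabla_{x_{1}},\nabla_{x_{2}}]$ established by expanding the commutator on simple tensors and matching terms (using $\tau(e_{1}\circ e_{2})=[\tau(e_{1}),\tau(e_{2})]$, the anchor homomorphism property, and the vanishing of $\nabla$ on the $\partial f_{1}$-summand of the bracket). You spell out some steps the paper dismisses as ``easily observed'' or ``obvious'' --- notably $\partial R\subseteq\mathcal{H}$, the two-sided ideal property via condition (3), and well-definedness on tensor products --- but the substance of the argument is identical.
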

\begin{proof}
1). Since 
\[
\langle f_{1}h_{1},f_{2}h_{2}\rangle=f_{1}f_{2}\langle h_{1},h_{2}\rangle=0,\quad\forall f_{1},f_{2}\in S^{\bullet}(Z),h_{1},h_{2}\in H,
\]
$\mathcal{H}$ is isotropic in $\mathcal{L}$. And it is easily observed
from Equation \ref{eq:bracket crossed product} that $\mathcal{H}$
is an ideal.

2). From the definition of $\nabla$, it is obvious that 
\begin{eqnarray*}
\nabla_{f_{1}h}(f_{2}v) & = & 0\\
\nabla_{f_{1}x}(f_{2}v) & = & f_{1}\nabla_{x}(f_{2}v)\\
\nabla_{x}(f_{1}(f_{2}v)) & = & (\rho(x)f_{1})(f_{2}v)+f_{1}\nabla_{x}(f_{2}v)
\end{eqnarray*}
$\forall f_{1},f_{2}\in S^{\bullet}(Z),h\in H,x\in\mathcal{L},v\in V.$
So we only need to prove that $\nabla$ is a homomorphism of Leibniz
algebras: 
\begin{eqnarray*}
 &  & [\nabla_{f_{1}e_{1}},\nabla_{f_{2}e_{2}}](fv)\\
 & = & \nabla_{f_{1}e_{1}}\big(f_{2}\langle e_{2},\partial f\rangle v+f_{2}f\tau(e_{2})v\big)-\nabla_{f_{2}e_{2}}\big(f_{1}\langle e_{1},\partial f\rangle v+f_{1}f\tau(e_{1})v\big)\\
 & = & \big(f_{1}\langle e_{1},\partial(f_{2}\langle e_{2},\partial f\rangle)\rangle v+f_{1}f_{2}\langle e_{2},\partial f\rangle\tau(e_{1})v+f_{1}\langle e_{1},\partial(f_{2}f)\rangle\tau(e_{2})v+f_{1}f_{2}f\tau(e_{1})\tau(e_{2})v\big)\\
 &  & -\big(f_{2}\langle e_{2},\partial(f_{1}\langle e_{1},\partial f\rangle)\rangle v+f_{2}f_{1}\langle e_{1},\partial f\rangle\tau(e_{2})v+f_{2}\langle e_{2},\partial(f_{1}f)\rangle\tau(e_{1})v+f_{2}f_{1}f\tau(e_{2})\tau(e_{1})v\big)\\
 & = & f_{1}f_{2}\big(\langle e_{1},\partial\langle e_{2},\partial f\rangle\rangle-\langle e_{2},\partial\langle e_{1},\partial f\rangle\rangle\big)v+f_{1}f_{2}f(\tau(e_{1})\tau(e_{2})v-\tau(e_{2})\tau(e_{1})v)\\
 &  & +f_{1}\langle e_{2},\partial f\rangle\langle e_{1},\partial f_{2}\rangle v-f_{2}\langle e_{1},\partial f\rangle\langle e_{2},\partial f_{1}\rangle v+f_{1}f\langle e_{1},\partial f_{2}\rangle\tau(e_{2})v-f_{2}f\langle e_{2},\partial f_{1}\rangle\tau(e_{1})v\\
 & = & f_{1}f_{2}\nabla_{e_{1}\circ e_{2}}(fv)+\langle e_{1},\partial f_{2}\rangle f_{1}\nabla_{e_{2}}(fv)-\langle e_{2},\partial f_{1}\rangle f_{2}\nabla_{e_{1}}(fv)\\
 & = & \nabla_{f_{1}f_{2}(e_{1}\circ e_{2})+\langle e_{1},\partial f_{2}\rangle f_{1}e_{2}-\langle e_{2},\partial f_{1}\rangle f_{2}e_{1}}(fv)\\
 & = & \nabla_{f_{1}e_{1}\circ f_{2}e_{2}}(fv)
\end{eqnarray*}

The proof is finished.
\end{proof}
Obviously $\mathcal{V}$ with the restriction of $\nabla$ to $L\subseteq\mathcal{L}$
is still an $H$-representation of $L$, we still denote it by $(V,\nabla)$. 

In the following, we always assume that 
\[
f\in S^{\bullet}(Z),\ e\in L,\ x\in\mathcal{L},\ h\in H,\ \alpha\in\mathcal{H}.
\]

\begin{thm}
\label{thm:CDA Leibniz standard}The $H$-standard complex of $L$
with coefficients in $\mathcal{V}$ is isomorphic to the $\mathcal{H}$-standard
complex of $\mathcal{L}$ with coefficients in $\mathcal{V}$, i.e.
\[
C^{\bullet}(L,H,\mathcal{V})\cong C^{\bullet}(\mathcal{L},\mathcal{H},\mathcal{V}).
\]
\end{thm}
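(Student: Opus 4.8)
The plan is to exhibit an explicit isomorphism of cochain complexes, and for this it suffices to produce a single bijective cochain map. I would take the \emph{restriction} map
\[
r\colon C^{\bullet}(\mathcal{L},\mathcal{H},\mathcal{V})\rightarrow C^{\bullet}(L,H,\mathcal{V}),\qquad r(\Phi)_{k}(e_{1},\cdots e_{n-2k};h_{1},\cdots h_{k})\triangleq\Phi_{k}(e_{1},\cdots e_{n-2k};h_{1},\cdots h_{k}),
\]
where the arguments $e_{a}\in L$ and $h_{j}\in H$ are viewed as $1\otimes e_{a}\in\mathcal{L}$ and $1\otimes h_{j}\in\mathcal{H}$. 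First I would check $r$ is well defined, i.e.\ that $r(\Phi)$ obeys the weak skew-symmetry defining $C^{\bullet}(L,H,\mathcal{V})$ in Theorem \ref{thm: standard complex}. This rests on the single observation that for $e,e'\in L$ one has $\langle e,e'\rangle=(e,e')\in Z$, and since $\partial$ restricts to the inclusion $Z\hookrightarrow L$ on degree-one elements, $\partial\langle e,e'\rangle=(e,e')$ as an element of $H\subseteq\mathcal{H}$; hence the weak skew-symmetry of $\Phi$, whose correction term is $\partial\langle e,e'\rangle$, becomes exactly the weak skew-symmetry of $r(\Phi)$, whose correction term is $(e,e')$.

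Next I would show $r$ is a cochain map. Since $d=d_{0}+\delta+d'$ in both complexes, it is enough to see that each constituent of the Courant--Dorfman coboundary restricts to the corresponding Leibniz operation on arguments drawn from $L$ and $H$. Concretely, evaluating Equation \ref{eq:bracket crossed product} with $f_{1}=f_{2}=1$ (so that $\partial 1=0$) gives $e_{a}\circ_{\mathcal{L}}e_{b}=e_{a}\circ_{L}e_{b}$; for $h\in H$ and $e\in L$ the bracket $h\circ_{\mathcal{L}}e=h\circ_{L}e$ lies in $H$ because $H$ is an ideal; the action $\nabla$ restricted to $L$ is precisely the $L$-action on $\mathcal{V}$; and the insertion term of $\delta$ moves $h_{i}\in H\subseteq L$ into the first slot just as in the Leibniz $\delta$. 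Carrying the $\odot^{k}\mathcal{H}$-arguments along unchanged, one obtains $r\circ d_{0}=d_{0}\circ r$, $r\circ\delta=\delta\circ r$ and $r\circ d'=d'\circ r$, hence $r\circ d=d\circ r$.

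The substantive point is that $r$ is a bijection. For injectivity I would use $R$-linearity in the $\mathcal{H}$-arguments together with weak $R$-linearity in the $\mathcal{L}$-arguments: pulling every coefficient of $S^{\bullet}(Z)$ out of an evaluation $\Phi_{k}(f_{1}e_{1},\cdots;g_{1}h_{1},\cdots)$ expresses it through $\Phi_{k}$ on pure arguments $e_{a}\in L,\ h_{j}\in H$ plus correction terms feeding into $\Phi_{k+1}$; a downward induction on $k$ (the top $\Phi_{[n/2]}$ has at most one $\mathcal{L}$-argument, where weak $R$-linearity is strict) then shows $\Phi$ is determined by $r(\Phi)$, so $r(\Phi)=0$ forces $\Phi=0$. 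For surjectivity I would reverse this: given $\phi\in C^{n}(L,H,\mathcal{V})$, choose a $\mathfrak{k}$-splitting $L=H\oplus X$ inducing $\mathcal{L}=\mathcal{H}\oplus\mathcal{X}$ with $\mathcal{X}=S^{\bullet}(Z)\otimes X$, and build $\Phi=s(\phi)$ by downward induction on $k$, declaring $\Phi_{k}$ to equal $\phi_{k}$ on pure arguments and extending slot-by-slot through the weak $R$-linearity and $R$-linearity prescriptions, exactly in the spirit of the construction of $\lambda$ in Lemma \ref{lem:lambda}.

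The main obstacle is the verification, in this last step, that the extended $\Phi$ is consistent and genuinely lies in $C^{n}(\mathcal{L},\mathcal{H},\mathcal{V})$: one must check that the slot-by-slot extension is independent of the order in which coefficients are pulled out and that the result satisfies weak skew-symmetry, weak $R$-linearity and $R$-linearity. These are precisely the Courant--Dorfman cochain conditions, and proving them reuses the identities \ref{eq:d0 weak skewsym}, \ref{eq:d prime weak skewsym}, \ref{eq:d0 weak R linear}, \ref{eq:d prime weak R linear}, \ref{eq:d R linear} together with the derivation property of $\partial$ and the defining formula \ref{eq:bracket crossed product}. A representative sanity check is that for two arguments $\Phi_{0}(f_{1}e_{1},f_{2}e_{2})=f_{1}f_{2}\phi_{0}(e_{1},e_{2})-f_{2}\langle e_{1},e_{2}\rangle\Phi_{1}(\partial f_{1})$ satisfies weak skew-symmetry on the nose, once one expands $\partial(f_{1}f_{2}\langle e_{1},e_{2}\rangle)$ by the Leibniz rule and uses $\partial\langle e_{1},e_{2}\rangle=(e_{1},e_{2})$. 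Once $r$ is established as a bijective cochain map it is an isomorphism of complexes, yielding $C^{\bullet}(L,H,\mathcal{V})\cong C^{\bullet}(\mathcal{L},\mathcal{H},\mathcal{V})$.
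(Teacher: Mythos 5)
Your proposal is correct and takes essentially the same route as the paper: the paper's proof likewise uses the restriction map (there called $\psi$) and constructs its inverse $\varphi$ by extending a Leibniz cochain slot-by-slot via weak $S^{\bullet}(Z)$-linearity in the $\mathcal{L}$-arguments and $S^{\bullet}(Z)$-linearity in the $\mathcal{H}$-arguments, the well-definedness of that extension being exactly the content of Lemma \ref{lem:welldefinedcochain}. The only cosmetic differences are that you phrase invertibility as injectivity-plus-surjectivity of the restriction (which usefully makes explicit the identity $\varphi\circ\psi=\mathrm{id}$ that the paper dismisses as obvious), and that your auxiliary splitting $L=H\oplus X$ is not actually needed for the extension.
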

\begin{proof}
For simplicity, let $(C_{1}^{\bullet},d_{1})$ be $C^{\bullet}(L,H,\mathcal{V})$,
and $(C_{2}^{\bullet},d_{2})$ be $C^{\bullet}(\mathcal{L},\mathcal{H},\mathcal{V})$.
Given any $\eta\in C_{2}^{n}$, we can obtain an associated cochain
in $C_{1}^{n}$ by restriction, denote this restriction map by $\psi$.
$\psi$ is obviously a cochain map.

Next, given any $\omega\in C_{1}^{n}$, we can extend it to a cochain
$\varphi\omega\in C_{2}^{n}$ as follows: 

for the degree 2 arguments, extend $\omega$ from $H$ to $\mathcal{H}$
by $S^{\bullet}(Z)$-linearity; 

for the degree 1 arguments, extend $\omega$ from $L$ to $\mathcal{L}$,
from the last argument to the first argument one by one, by the equation
of weak $S^{\bullet}(Z)$-linearity: 
\begin{eqnarray*}
 &  & (\varphi\omega)_{k}(e_{1},\cdots e_{a-1},fe_{a},x_{a+1},\cdots x_{n-2k};\alpha_{1},\cdots\alpha_{k})\\
 & = & f(\varphi\omega)_{k}(e_{1},\cdots e_{a-1},e_{a},x_{a+1},\cdots x_{n-2k};\alpha_{1},\cdots\alpha_{k})\\
 &  & +\sum_{b>a}(-1)^{b-a}\langle e_{a},x_{b}\rangle(\varphi\omega)_{k+1}(e_{1},\cdots e_{a-1},\widehat{e_{a}},x_{a+1},\cdots\widehat{x_{b}},\cdots x_{n-2k};\partial f,\alpha_{1},\cdots\alpha_{k}).
\end{eqnarray*}

The proof that $\varphi\omega$ is a cochain in $C_{2}^{n}$ is left
to the lemma below \ref{lem:welldefinedcochain}. 

Obviously, $\psi\circ\varphi=id_{C_{1}^{\bullet}},\ \varphi\circ\psi=id_{C_{2}^{\bullet}}$.
And $\varphi$ is also a cochain map:

\[
\varphi(d_{1}\omega)=\varphi(d_{1}(\psi(\varphi\omega)))=\varphi(\psi(d_{2}(\varphi\omega)))=d_{2}(\varphi\omega),\ \forall\omega\in C_{1}^{\bullet}
\]

The proof is finished.
\end{proof}
\vspace{3mm}

\begin{lem}
\label{lem:welldefinedcochain}$\eta\triangleq\varphi\omega$ as defined
above is a cochain in $C_{2}^{n}$.\end{lem}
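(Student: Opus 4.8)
The plan is to verify that $\eta\triangleq\varphi\omega$ satisfies the three defining conditions of $C_2^{n}=C^{\bullet}(\mathcal{L},\mathcal{H},\mathcal{V})$: weak skew-symmetry up to $\eta_{k+1}$, weak $S^{\bullet}(Z)$-linearity up to $\eta_{k+1}$, and $S^{\bullet}(Z)$-linearity in the $\mathcal{H}$-slots. By $\mathfrak{k}$-multilinearity it suffices to evaluate $\eta_{k}$ on degree-$1$ arguments $f_{a}e_{a}$ ($f_{a}\in S^{\bullet}(Z),e_{a}\in L$) and degree-$2$ arguments $g_{i}h_{i}$ ($g_{i}\in S^{\bullet}(Z),h_{i}\in H$). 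First I would record that the defining recursion is $\mathfrak{k}$-bilinear in each pair $(f_{a},e_{a})$ — the right-hand side is linear in $f_{a}$ because $\partial f_{a}$ is, and linear in $e_{a}$ because both $\omega_{k}$ and $\langle\cdot,\cdot\rangle$ are — so $\eta_{k}$ descends to a well-defined map on $\otimes^{n-2k}\mathcal{L}\otimes\odot^{k}\mathcal{H}$. Since each correction term raises $k$ and the reduction terminates at $\eta_{[\frac{n}{2}]}$, which has at most one degree-$1$ slot, the whole construction is an induction on $k$ descending from $[\frac{n}{2}]$, with the base case trivial because no correction term exists there; throughout I will assume all three conditions already hold for $\eta_{k+1}$.

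The $S^{\bullet}(Z)$-linearity in the $\mathcal{H}$-slots (condition 3) is essentially built in: the degree-$2$ extension is $S^{\bullet}(Z)$-multilinear by definition, and the recursion only ever feeds $\partial f_{a}\in\mathcal{H}$ into a degree-$2$ slot of $\eta_{k+1}$ (note $\partial$ maps $S^{\bullet}(Z)$ into $S^{\bullet}(Z)\otimes H=\mathcal{H}$ since $Z\subseteq H$), whose $\mathcal{H}$-linearity is available by induction. For weak $S^{\bullet}(Z)$-linearity (condition 2), the recursion is precisely the weak-linearity relation applied to the last slot, so the identity holds verbatim when the coefficient is extracted from slot $n-2k$; to obtain it for an arbitrary slot $a$ with arbitrary entries in the remaining slots, I would first reduce the slots to the right of $a$ and then extract from slot $a$, matching the resulting $\eta_{k+1}$-corrections against the required $\sum_{b>a}(-1)^{b-a}\langle x_{a},x_{b}\rangle\,\eta_{k+1}(\cdots;\partial f,\cdots)$ terms using that $\partial$ is a derivation and $\langle\cdot,\cdot\rangle$ is $S^{\bullet}(Z)$-bilinear. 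This step leans on the weak skew-symmetry of $\eta_{k}$, so conditions 1 and 2 must be proved together within the same induction.

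The heart of the lemma is weak skew-symmetry (condition 1), and the key observation is the bridge identity $\partial\langle e,e'\rangle=(e,e')$ for $e,e'\in L$: indeed $\langle e,e'\rangle=(e,e')$ lies in $Z=S^{1}(Z)$ and $\partial$ restricts to the inclusion $Z\hookrightarrow L$ on degree-$1$ elements, so the $C_2$-correction $-\eta_{k+1}(\cdots;\partial\langle e,e'\rangle,\cdots)$ coincides with the $C_1$-correction $-\omega_{k+1}(\cdots;(e,e'),\cdots)$, with $(e,e')\in Z\subseteq H\subseteq\mathcal{H}$ sitting correctly in a degree-$2$ slot. Thus for pure arguments the weak skew-symmetry of $\eta$ reduces exactly to that of $\omega$, which holds because $\omega\in C_{1}^{n}$. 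For general degree-$1$ entries $x_{i}=fe$, $x_{i+1}=ge'$ I would use condition 2 to strip the coefficients $f,g$, swap the resulting pure entries $e,e'$ via $\omega$'s weak skew-symmetry, and reassemble, the interaction of the stripped coefficients with the swap being controlled by the derivation property of $\partial$ and the identity $\langle fe,ge'\rangle=fg\langle e,e'\rangle$.

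The hard part will be the bookkeeping in this last step: when both transposed arguments carry nontrivial $S^{\bullet}(Z)$-coefficients, stripping them produces several families of $\eta_{k+1}$- and $\eta_{k+2}$-correction terms that must be shown to cancel in pairs, exactly as in the three-case analysis (cases (1), (2), (3)) of the proof of Lemma \ref{lem:lambda}. I expect this to be routine but lengthy, and I would organize it, as there, by first settling the transpositions on pure arguments and then propagating through the coefficient-extraction formula; the downward induction on $k$ is precisely what breaks the apparent circularity between conditions 1 and 2. With all three conditions verified, $\eta=\varphi\omega\in C_{2}^{n}$, as required.
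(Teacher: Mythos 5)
Your overall skeleton is the right one and is the same as the paper's: check the three defining conditions, get $S^{\bullet}(Z)$-linearity in the $\mathcal{H}$-slots essentially by construction, reduce the all-pure case to the corresponding condition on $\omega$ via the bridge identity $\partial\langle e,e'\rangle=(e,e')\in Z\subseteq H\subseteq\mathcal{H}$ (you are right that this identity is what makes the whole comparison work), and then handle general arguments by an induction whose correction terms live one level up. But there is a genuine structural gap in how you organize that induction, and it is precisely the crux of the lemma. You claim that ``the downward induction on $k$ is precisely what breaks the apparent circularity between conditions 1 and 2.'' It does not. When you strip a coefficient (condition 2) or transpose two entries (condition 1), only the \emph{correction} terms drop to level $k+1$; the \emph{main} terms --- $f\eta_{k}(\cdots)$ with one more pure entry, or $\eta_{k}(\cdots)$ with the pair swapped --- remain at level $k$. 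So your plan, in which condition 1 at level $k$ invokes condition 2 at level $k$ (to strip $f,g$ from the transposed pair), while condition 2 at level $k$ in turn ``leans on the weak skew-symmetry of $\eta_{k}$,'' is circular at fixed $k$, and no induction on $k$ alone grounds it. What is needed, and what the paper supplies, is a second induction parameter: the length of the pure prefix, i.e.\ the number of leading degree-$1$ arguments lying in $L$. The paper proves \ref{eq:1} and \ref{eq:3} by two separate, independent double inductions (downward on $k$ and downward on the number of leading general entries), and it never uses full weak linearity inside the proof of weak skew-symmetry nor vice versa: it only ever invokes the defining recursion itself, which extracts a coefficient exactly at the boundary of the pure prefix. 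That discipline is what removes the circularity you would otherwise face.

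Relatedly, you have the geometry of the recursion backwards. The defining formula extracts the coefficient from slot $a$ when the slots to the \emph{left} of $a$ are pure (the trailing slots may be arbitrary elements of $\mathcal{L}$); it is not ``the weak-linearity relation applied to the last slot,'' and it does not ``hold verbatim'' for extraction from slot $n-2k$ unless all other slots are pure, so ``reducing the slots to the right of $a$'' never puts you in a position to apply it. To reach a general tuple one must clear coefficients on the left, slot by slot: this is exactly the paper's inner induction, where Equation \ref{eq:2} is the boundary case (transposition immediately after the pure prefix, itself proved using \ref{eq:1} for a longer pure prefix), and the displayed induction steps peel off one leading coefficient at a time, recombining the $\eta_{k+1}$- and $\eta_{k+2}$-corrections via the derivation property of $\partial$ and the $S^{\bullet}(Z)$-bilinearity of $\langle\cdot,\cdot\rangle$. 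Once the induction is set up with this second parameter, your outline (pure case via the bridge identity, then propagation through the recursion) does go through; and your supplementary points --- well-definedness of $\varphi\omega$ via $\mathfrak{k}$-bilinearity of the recursion, and the inductive justification of $\mathcal{H}$-linearity --- are correct observations that the paper leaves implicit.
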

\begin{proof}
$\eta$ is $S^{\bullet}(Z)$-linear in the arguments of $\mathcal{H}$
by definition. So we only need to prove weak skew-symmetricity and
weak $S^{\bullet}(Z)$-linearity in the arguments of $\mathcal{L}$.

Proof of weak skew-symmetricity: 
\begin{eqnarray}
 &  & \eta_{k}(x_{1},\cdots x_{a},x_{a+1}\cdots x_{n-2k};\alpha_{1},\cdots\alpha_{k})+\eta_{k}(x_{1},\cdots x_{a+1},x_{a}\cdots x_{n-2k};\cdots)\label{eq:1}\\
 & = & -\eta_{k+1}(x_{1},\cdots\widehat{x_{a}},\widehat{x_{a+1}},\cdots x_{n-2k};\partial\langle x_{a},x_{a+1}\rangle,\alpha_{1},\cdots\alpha_{k}).\nonumber 
\end{eqnarray}

Suppose $x_{b}=f_{b}e_{b},\ \forall b$. First we prove Equation \ref{eq:1}
for the case when $x_{1},\cdots x_{a-1}\in L$:
\begin{eqnarray}
 &  & \qquad\qquad\eta_{k}(e_{1},\cdots e_{a-1},x_{a},x_{a+1},\cdots x_{n-2k};\cdots)+\eta_{k}(e_{1},\cdots e_{a-1},x_{a+1},x_{a},\cdots x_{n-2k};\cdots)\label{eq:2}\\
 & = & f_{a}\eta_{k}(e_{1},\cdots e_{a},f_{a+1}e_{a+1},\cdots x_{n-2k};\cdots)-\langle e_{a},f_{a+1}e_{a+1}\rangle\eta_{k+1}(\cdots\widehat{e_{a}},\widehat{e_{a+1}},\cdots;\partial f_{a},\cdots)\nonumber \\
 &  & +\sum_{b>a+1}(-1)^{b+a}\langle e_{a},x_{b}\rangle\eta_{k+1}(\cdots\widehat{e_{a}},f_{a+1}e_{a+1},\cdots\widehat{x_{b}},\cdots;\partial f_{a},\cdots)\nonumber \\
 &  & +f_{a+1}\eta_{k}(\cdots e_{a+1},f_{a}e_{a},\cdots;\cdots)-\langle e_{a+1},f_{a}e_{a}\rangle\eta_{k+1}(\cdots\widehat{e_{a+1}},\widehat{e_{a}},\cdots;\partial f_{a+1},\cdots)\nonumber \\
 &  & +\sum_{b>a+1}(-1)^{b+a}\langle e_{a+1},x_{b}\rangle\eta_{k+1}(\cdots\widehat{e_{a+1}},f_{a}e_{a},\cdots\widehat{x_{b}},\cdots;\partial f_{a+1},\cdots)\nonumber \\
 & = & f_{a}f_{a+1}\eta_{k}(\dots e_{a},e_{a+1},\cdots)+f_{a}\sum_{b>a+1}(-1)^{b+a+1}\langle e_{a+1},x_{b}\rangle\eta_{k+1}(\cdots e_{a},\widehat{e_{a+1}},\cdots\widehat{x_{b}},\cdots;\partial f_{a+1},\cdots)\nonumber \\
 &  & +f_{a+1}\sum_{b>a+1}(-1)^{b+a}\langle e_{a},x_{b}\rangle\eta_{k+1}(\cdots\widehat{e_{a}},e_{a+1},\cdots\widehat{x_{b}},\cdots;\partial f_{a},\cdots)\nonumber \\
 &  & +\sum_{a+1<b<c}(-1)^{c+a}(-1)^{b+a+1}\langle e_{a},x_{c}\rangle\langle e_{a+1},x_{b}\rangle\eta_{k+2}(\cdots\widehat{e_{a}},\widehat{e_{a+1}},\cdots\widehat{x_{b}},\cdots\widehat{x_{c}},\cdots;\partial f_{a},\partial f_{a+1},\cdots)\nonumber \\
 &  & +\sum_{a+1<c<b}(-1)^{c+a}(-1)^{b+a}\langle e_{a},x_{c}\rangle\langle e_{a+1},x_{b}\rangle\eta_{k+2}(\cdots\widehat{e_{a}},\widehat{e_{a+1}},\cdots\widehat{x_{c}},\cdots\widehat{x_{b}},\cdots;\partial f_{a},\partial f_{a+1},\cdots)\nonumber \\
 &  & +f_{a}f_{a+1}\eta_{k}(\cdots e_{a+1},e_{a},\cdots)+f_{a+1}\sum_{b>a+1}(-1)^{b+a+1}\langle e_{a},x_{b}\rangle\eta_{k+1}(\cdots e_{a+1},\widehat{e_{a}},\cdots\widehat{x_{b}},\cdots;\partial f_{a},\cdots)\nonumber \\
 &  & +f_{a}\sum_{b>a+1}(-1)^{b+a}\langle e_{a+1},x_{b}\rangle\eta_{k+1}(\cdots\widehat{e_{a+1}},e_{a},\cdots\widehat{x_{b}},\cdots;\partial f_{a+1},\cdots)\nonumber \\
 &  & +\sum_{a+1<b<c}(-1)^{c+a}(-1)^{b+a+1}\langle e_{a+1},x_{c}\rangle\langle e_{a},x_{b}\rangle\eta_{k+2}(\cdots\widehat{e_{a+1}},\widehat{e_{a}},\cdots\widehat{x_{b}},\cdots\widehat{x_{c}},\cdots;\partial f_{a},\partial f_{a+1},\cdots)\nonumber \\
 &  & +\sum_{a+1<c<b}(-1)^{c+a}(-1)^{b+a}\langle e_{a+1},x_{c}\rangle\langle e_{a},x_{b}\rangle\eta_{k+2}(\cdots\widehat{e_{a+1}},\widehat{e_{a}},\cdots\widehat{x_{c}},\cdots\widehat{x_{b}},\cdots;\partial f_{a},\partial f_{a+1},\cdots)\nonumber \\
 &  & -\big(f_{a+1}\langle e_{a},e_{a+1}\rangle\eta_{k+1}(\cdots\widehat{e_{a}},\widehat{e_{a+1}},\cdots;\partial f_{a},\cdots)+f_{a}\langle e_{a+1},e_{a}\rangle\eta_{k+1}(\cdots\widehat{e_{a+1}},\widehat{e_{a}},\cdots;\partial f_{a+1},\cdots)\big)\nonumber \\
 & = & f_{a}f_{a+1}\big(\eta_{k}(\dots,e_{a},e_{a+1},\cdots;\cdots)+\eta_{k}(\dots,e_{a+1},e_{a},\cdots;\cdots)\big)\nonumber \\
 &  & -\big(f_{a+1}\langle e_{a},e_{a+1}\rangle\eta_{k+1}(\cdots\widehat{e_{a}},\widehat{e_{a+1}},\cdots;\partial f_{a},\cdots)+f_{a}\langle e_{a+1},e_{a}\rangle\eta_{k+1}(\cdots\widehat{e_{a+1}},\widehat{e_{a}},\cdots;\partial f_{a+1},\cdots)\big)\nonumber \\
 & = & -\eta_{k+1}(\cdots\widehat{e_{a}},\widehat{e_{a+1}},\cdots,x_{n-2k};\partial\langle f_{a}e_{a},f_{a+1}e_{a+1}\rangle,\cdots)\nonumber 
\end{eqnarray}

We will prove Equation \ref{eq:1} by mathematical induction.

If $n=2l$ is even, when $k=l-1$, Equation \ref{eq:1} is equivalent
to \ref{eq:2}.

If $n=2l+1$ is odd, when $k=l-1$, $\ref{eq:1}$ is the combination
of \ref{eq:2} and the following:

\begin{eqnarray*}
 &  & \eta_{l-1}(f_{1}e_{1},f_{2}e_{2},f_{3}e_{3};\alpha_{1},\cdots\alpha_{l-1})+\eta_{l-1}(f_{1}e_{1},f_{3}e_{3},f_{2}e_{2};\alpha_{1},\cdots\alpha_{l-1})\\
 & = & f_{1}\eta_{l-1}(e_{1},f_{2}e_{2},f_{3}e_{3};\cdots)-\langle e_{1},f_{2}e_{2}\rangle\eta_{l}(f_{3}e_{3};\partial f_{1},\cdots)+\langle e_{1},f_{3}e_{3}\rangle\eta_{l}(f_{2}e_{2};\partial f_{1},\cdots)\\
 &  & +f_{1}\eta_{l-1}(e_{1},f_{3}e_{3},f_{2}e_{2};\cdots)-\langle e_{1},f_{3}e_{3}\rangle\eta_{l}(f_{2}e_{2};\partial f_{1},\cdots)+\langle e_{1},f_{2}e_{2}\rangle\eta_{l}(f_{3}e_{3};\partial f_{1},\cdots)\\
 & = & -f_{1}\eta_{l}(e_{1};\partial\langle f_{2}e_{2},f_{3}e_{3}\rangle,\cdots)\\
 & = & -\eta_{l}(f_{1}e_{1};\partial\langle f_{2}e_{2},f_{3}e_{3}\rangle,\cdots).
\end{eqnarray*}

Now suppose Equation \ref{eq:1} holds for $k>m$, consider the case
when $k=m$. By Equation \ref{eq:2}, we can further suppose that
\ref{eq:1} holds for $x_{1},\cdots x_{i}\in L,\ i<a$. We will prove
\ref{eq:1} for the case when $k=m$ and $x_{1},\cdots,x_{i-1}\in L$:
\begin{eqnarray*}
 &  & \eta_{m}(e_{1},\cdots e_{i-1},f_{i}e_{i},\cdots x_{a},x_{a+1},\cdots;\cdots)+\eta_{m}(e_{1},\cdots e_{i-1},f_{i}e_{i},\cdots x_{a+1},x_{a},\cdots;\cdots)\\
 & = & f_{i}\eta_{m}(\cdots e_{i},\cdots x_{a},x_{a+1},\cdots)+\sum_{b>i\neq a,a+1}(-1)^{b-i}\langle e_{i},x_{b}\rangle\eta_{m+1}(\cdots\widehat{e_{i}},\cdots\widehat{x_{b}},\cdots x_{a},x_{a+1},\cdots;\partial f_{i}\cdots)\\
 &  & f_{i}\eta_{m}(\cdots e_{i},\cdots x_{a+1},x_{a},\cdots)+\sum_{b>i\neq a,a+1}(-1)^{b-i}\langle e_{i},x_{b}\rangle\eta_{m+1}(\cdots\widehat{e_{i}},\cdots\widehat{x_{b}},\cdots x_{a+1},x_{a},\cdots;\partial f_{i}\cdots)\\
 &  & +((-1)^{a-i}+(-1)^{a+1-i})\langle e_{i},x_{a}\rangle\eta_{m+1}(\cdots\widehat{e_{i}},\cdots\widehat{x_{a}},x_{a+1},\cdots;\partial f_{i}\cdots)\\
 &  & +((-1)^{a+1-i}+(-1)^{a-i})\langle e_{i},x_{a+1}\rangle\eta_{m+1}(\cdots\widehat{e_{i}},\cdots x_{a},\widehat{x_{a+1}},\cdots;\partial f_{i}\cdots)\\
 & = & f_{i}\big(\eta_{m}(\cdots e_{i},\cdots x_{a},x_{a+1},\cdots;\cdots)+\eta_{m}(\cdots e_{i},\cdots x_{a+1},x_{a},\cdots;\cdots)\big)\\
 &  & +\big(\sum_{b>i\neq a,a+1}(-1)^{b-i}\langle e_{i},x_{b}\rangle\eta_{m+1}(\cdots\widehat{e_{i}},\cdots\widehat{x_{b}},\cdots x_{a},x_{a+1},\cdots;\partial f_{i}\cdots)\\
 &  & \quad+\sum_{b>i\neq a,a+1}(-1)^{b-i}\langle e_{i},x_{b}\rangle\eta_{m+1}(\cdots\widehat{e_{i}},\cdots\widehat{x_{b}},\cdots x_{a+1},x_{a},\cdots;\partial f_{i}\cdots)\big)\\
 & = & -f_{i}\eta_{m+1}(\cdots e_{i},\cdots\widehat{x_{a}},\widehat{x_{a+1}},\cdots;\partial\langle x_{a},x_{a+1}\rangle,\cdots)\\
 &  & -\sum_{b>i\neq a,a+1}(-1)^{b-i}\langle e_{i},x_{b}\rangle\eta_{m+2}(\cdots\widehat{e_{i}},\cdots\widehat{x_{b}},\cdots\widehat{x_{a}},\widehat{x_{a+1}},\cdots;\partial\langle x_{a},x_{a+1}\rangle,\partial f_{i}\cdots)\\
 & = & -\eta_{m+1}(e_{1},\cdots e_{i-1},f_{i}e_{i},\cdots\widehat{x_{a}},\widehat{x_{a+1}},\cdots;\partial\langle x_{a},x_{a+1}\rangle,\cdots)
\end{eqnarray*}

By induction, \ref{eq:1} is proved.

Proof of weak $S^{\bullet}(Z)$-linearity: 
\begin{eqnarray}
 &  & \eta_{k}(x_{1},\cdots x_{i-1},fx_{i},\cdots x_{n-2k};\alpha_{1},\cdots\alpha_{k})\label{eq:3}\\
 & = & f\eta_{k}(\cdots x_{i},\cdots;\alpha_{1},\cdots\alpha_{k})+\sum_{a>i}(-1)^{a-i}\langle x_{i},x_{a}\rangle\eta_{k+1}(\cdots\widehat{x_{i}},\cdots\widehat{x_{a}},\cdots;\partial f,\alpha_{1},\cdots\alpha_{k})\nonumber 
\end{eqnarray}

When $x_{1},\cdots,x_{i-1}\in L$,\ref{eq:3} holds:
\begin{eqnarray*}
 &  & \eta_{k}(e_{1},\cdots,e_{i-1},ff_{i}e_{i},\cdots;\cdots)\\
 & = & ff_{i}\eta_{k}(\cdots e_{i},\cdots;\cdots)+\sum_{a>i}(-1)^{a-i}\langle e_{i},x_{a}\rangle\eta_{k+1}(\cdots\widehat{x_{i}},\cdots\widehat{x_{a}},\cdots;\partial(ff_{i}),\cdots)\\
 & = & f(\eta_{k}(\cdots f_{i}e_{i},\cdots;\cdots)-\sum_{a>i}(-1)^{a-i}\langle e_{i},x_{a}\rangle\eta_{k+1}(\cdots\widehat{x_{i}},\cdots\widehat{x_{a}},\cdots;\partial f_{i},\cdots))\\
 &  & +\sum_{a>i}(-1)^{a-i}\langle e_{i},x_{a}\rangle\eta_{k+1}(\cdots\widehat{x_{i}},\cdots\widehat{x_{a}},\cdots;\partial(ff_{i}),\cdots)\\
 & = & f\eta_{k}(\cdots f_{i}e_{i},\cdots;\cdots)+\sum_{a>i}(-1)^{a-i}\langle f_{i}e_{i},x_{a}\rangle\eta_{k+1}(\cdots\widehat{x_{i}},\cdots\widehat{x_{a}},\cdots;\partial f,\cdots)
\end{eqnarray*}

Now suppose \ref{eq:3} holds for any $k>m$, and for the case when
$x_{1},\cdots,x_{j}\in L(j<i),\ k=m$ as well. Consider the case when
$x_{1},\cdots,x_{j-1}\in L,\ k=m$:
\begin{eqnarray*}
 &  & \eta_{k}(e_{1},\cdots e_{j-1},f_{j}e_{j},\cdots,fx_{i},\cdots;\cdots)\\
 & = & f_{j}\eta_{k}(\cdots e_{j},\cdots fx_{i},\cdots;\cdots)+(-1)^{i-j}\langle e_{j},fx_{i}\rangle\eta_{k+1}(\cdots\widehat{e_{j}},\cdots\widehat{x_{i}},\cdots;\partial f_{j},\cdots)\\
 &  & +\sum_{b>j,b\neq i}(-1)^{b+j}\langle e_{j},x_{b}\rangle\eta_{k+1}(\cdots\widehat{e_{j}},\cdots\widehat{x_{b}},\cdots,fx_{i},\cdots;\partial f_{j},\cdots)\\
 & = & f_{j}\big(f\eta_{k}(\cdots e_{j},\cdots x_{i},\cdots;\cdots)+\sum_{a>i}(-1)^{a+i}\langle x_{i},x_{a}\rangle\eta_{k+1}(\cdots e_{j},\cdots\widehat{x_{i}},\cdots\widehat{x_{a}},\cdots;\partial f,\cdots)\big)\\
 &  & +\sum_{j<b<i}(-1)^{b+j}\langle e_{j},x_{b}\rangle\big(f\eta_{k+1}(\cdots\widehat{e_{j}},\cdots\widehat{x_{b}},\cdots,x_{i},\cdots;\partial f_{j},\cdots)\\
 &  & \quad+\sum_{a>i}(-1)^{a+i}\langle x_{i},x_{a}\rangle\eta_{k+2}(\cdots\widehat{e_{j}},\cdots\widehat{x_{b}},\cdots\widehat{x_{i}},\cdots,\widehat{x_{a}},\cdots;\partial f,\partial f_{j},\cdots)\big)\\
 &  & +\sum_{b>i}(-1)^{b+j}\langle e_{j},x_{b}\rangle\big(f\eta_{k+1}(\cdots\widehat{e_{j}},\cdots,x_{i},\cdots\widehat{x_{b}},\cdots;\partial f_{j},\cdots)\\
 &  & \quad+\sum_{i<a<b}(-1)^{a+i}\langle x_{i},x_{a}\rangle\eta_{k+2}(\cdots\widehat{e_{j}},\cdots\widehat{x_{i}},\cdots\widehat{x_{a}},\cdots\widehat{x_{b}},\cdots;\partial f,\partial f_{j},\cdots)\\
 &  & \quad+\sum_{b<a}(-1)^{a+i+1}\langle x_{i},x_{a}\rangle\eta_{k+2}(\cdots\widehat{e_{j}},\cdots\widehat{x_{i}},\cdots\widehat{x_{b}},\cdots\widehat{x_{a}},\cdots;\partial f,\partial f_{j},\cdots)\big)\\
 &  & +(-1)^{i+j}\langle e_{j},fx_{i}\rangle\eta_{k+1}(\cdots\widehat{e_{j}},\cdots\widehat{x_{i}},\cdots;\partial f_{j},\cdots)\\
 & = & f\big(f_{j}\eta_{k}(\cdots e_{j},\cdots,x_{i},\cdots;\cdots)+(-1)^{i+j}\langle e_{j},x_{i}\rangle\eta_{k+1}(\cdots\widehat{e_{j}},\cdots\widehat{x_{i}},\cdots;\partial f_{j},\cdots)\\
 &  & \quad+\sum_{j<b<i}(-1)^{b+j}\langle e_{j},x_{b}\rangle\eta_{k+1}(\cdots\widehat{e_{j}},\cdots\widehat{x_{b}},\cdots,x_{i},\cdots;\partial f_{j},\cdots)\\
 &  & \quad+\sum_{b>i}(-1)^{b+j}\langle e_{j},x_{b}\rangle\eta_{k+1}(\cdots\widehat{e_{j}},\cdots x_{i},\cdots\widehat{x_{b}},\cdots;\partial f_{j},\cdots)\big)\\
 &  & +\sum_{a>i}(-1)^{a+i}\langle x_{i},x_{a}\rangle\big(f_{j}\eta_{k+1}(\cdots\widehat{e_{j}},\cdots\widehat{x_{i}},\cdots\widehat{x_{a}},\cdots;\partial f,\cdots)\\
 &  & \quad+\sum_{j<b<i}(-1)^{b+j}\langle e_{j},x_{b}\rangle\eta_{k+2}(\cdots e_{j},\cdots\widehat{x_{b}},\cdots\widehat{x_{i}},\cdots x_{a},\cdots;\partial f,\partial f_{j},\cdots)\\
 &  & \quad+\sum_{i<b<a}(-1)^{b+j+1}\langle e_{j},x_{b}\rangle\eta_{k+2}(\cdots\widehat{e_{j}},\cdots\widehat{x_{i}},\cdots\widehat{x_{b}},\cdots\widehat{x_{a}},\cdots;\partial f,\partial f_{j},\cdots)\\
 &  & \quad+\sum_{b>a}(-1)^{b+j}\langle e_{j},x_{b}\rangle\eta_{k+2}(\cdots\widehat{e_{j}},\cdots\widehat{x_{i}},\cdots\widehat{x_{a}},\cdots\widehat{x_{b}},\cdots;\partial f,\partial f_{j},\cdots)\big)\\
 & = & f\eta_{k}(\cdots f_{j}e_{j},\cdots x_{i},\cdots;\cdots)+\sum_{a>i}(-1)^{a-i}\langle x_{i},x_{a}\rangle\eta_{k+1}(\cdots f_{j}e_{j},\cdots\widehat{x_{i}},\cdots,\widehat{x_{a}},\cdots;\partial f,\cdots)
\end{eqnarray*}

By mathematical induction, Equation \ref{eq:3} holds for any $k$.

The proof is finished.
\end{proof}
It is easily proved that the Chevalley-Eilenberg complex of the Lie
algebra $L/H$ with coefficients in $\mathcal{V}$ is isomorphic to
the Chevalley-Eilenberg complex of the Lie-Rinehart algebra $\mathcal{L}/\mathcal{H}$
with coefficients in $\mathcal{V}$. Whence, combining Theorem \ref{thm:CDA Leibniz standard}
and \ref{thm:isomorphism CDA} (the quotient $\mathcal{L}/\mathcal{H}\cong S^{\bullet}(Z)\otimes(L/H)$
is a free module), we have the following:
\begin{cor}
\label{cor:isomorphism Leibniz }With the notations above,
\[
H^{\bullet}(L,H,\mathcal{V})\cong H_{CE}^{\bullet}(L/H,\mathcal{V}).
\]

\end{cor}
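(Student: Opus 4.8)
The plan is to chain together the two isomorphism theorems already proved for the crossed product, inserting one elementary comparison of Chevalley--Eilenberg complexes at the end. Explicitly, I would establish
\[
H^{\bullet}(L,H,\mathcal{V})\cong H^{\bullet}(\mathcal{L},\mathcal{H},\mathcal{V})\cong H_{CE}^{\bullet}(\mathcal{L}/\mathcal{H},\mathcal{V})\cong H_{CE}^{\bullet}(L/H,\mathcal{V}),
\]
where the three links come respectively from Theorem \ref{thm:CDA Leibniz standard}, from Theorem \ref{thm:isomorphism CDA} applied to the crossed product $\mathcal{L}$, and from a base-change identification of the Lie--Rinehart complex of $\mathcal{L}/\mathcal{H}$ with the Lie complex of $L/H$.

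For the first link I invoke Theorem \ref{thm:CDA Leibniz standard}, which already provides an isomorphism of cochain complexes $C^{\bullet}(L,H,\mathcal{V})\cong C^{\bullet}(\mathcal{L},\mathcal{H},\mathcal{V})$; passing to cohomology is immediate. For the second link I must check that the crossed product $\mathcal{L}$ of Proposition \ref{prop: from Leibniz to CDA}, together with $\mathcal{H}=S^{\bullet}(Z)\otimes H$ and $\mathcal{V}=S^{\bullet}(Z)\otimes V$, satisfies the hypotheses of Theorem \ref{thm:isomorphism CDA}. The submodule $\mathcal{H}$ is an isotropic ideal of $\mathcal{L}$, and it contains $\partial R$ because $\partial$ extends the inclusion $Z\hookrightarrow L$ while $Z\subseteq H$; finally $\mathcal{L}/\mathcal{H}\cong S^{\bullet}(Z)\otimes(L/H)$ is free over $S^{\bullet}(Z)$, hence projective. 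Theorem \ref{thm:isomorphism CDA} then yields $H^{\bullet}(\mathcal{L},\mathcal{H},\mathcal{V})\cong H_{CE}^{\bullet}(\mathcal{L}/\mathcal{H},\mathcal{V})$.

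The remaining link is the step I expect to require the most care, although it is still essentially bookkeeping. Here $\mathcal{L}/\mathcal{H}$ is a Lie--Rinehart algebra over $R=S^{\bullet}(Z)$ that is free as an $R$-module, with an $R$-basis furnished by any $\mathfrak{k}$-basis of $L/H$; consequently $\wedge^{n}_{R}(\mathcal{L}/\mathcal{H})\cong R\otimes_{\mathfrak{k}}\wedge^{n}(L/H)$, and restriction along $L/H\hookrightarrow\mathcal{L}/\mathcal{H}$ gives a bijection $\Hom_{R}(\wedge^{n}_{R}(\mathcal{L}/\mathcal{H}),\mathcal{V})\cong\Hom_{\mathfrak{k}}(\wedge^{n}(L/H),\mathcal{V})=C^{n}_{CE}(L/H,\mathcal{V})$. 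The point to verify is that this bijection intertwines the two differentials: the anchor term of the Lie--Rinehart differential reduces, on basis vectors coming from $L/H$, to the $L/H$-action on $\mathcal{V}$ induced by $\nabla$ (equivalently by $\tau$ together with the $\rho$-action on $S^{\bullet}(Z)$), and the bracket term reduces to the bracket of the Lie algebra $L/H$; both reductions hold because the anchor, bracket and connection on $\mathcal{L}$ were all defined by $S^{\bullet}(Z)$-linear extension of the corresponding data on $L$. Combining the three isomorphisms gives $H^{\bullet}(L,H,\mathcal{V})\cong H_{CE}^{\bullet}(L/H,\mathcal{V})$, as claimed.
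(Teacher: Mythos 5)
Your proposal is correct and follows essentially the same route as the paper: the paper likewise combines Theorem \ref{thm:CDA Leibniz standard}, Theorem \ref{thm:isomorphism CDA} (noting that $\mathcal{L}/\mathcal{H}\cong S^{\bullet}(Z)\otimes(L/H)$ is free, hence projective), and the identification of the Chevalley--Eilenberg complex of the Lie--Rinehart algebra $\mathcal{L}/\mathcal{H}$ with that of the Lie algebra $L/H$. Your write-up is in fact slightly more careful than the paper's, since you spell out the verification of the hypotheses of Theorem \ref{thm:isomorphism CDA} and the base-change argument that the paper dismisses as ``easily proved.''
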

Actually this result is true for the $H$-representation $(V,\tau)$:
\begin{thm}
With the notations above,
\[
H^{\bullet}(L,H,V)\cong H_{CE}^{\bullet}(L/H,V).
\]
\end{thm}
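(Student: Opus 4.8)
The plan is to bootstrap Corollary~\ref{cor:isomorphism Leibniz } from the coefficient module $\mathcal{V}=S^{\bullet}(Z)\otimes V$ down to $V$ itself, by exploiting the $S^{\bullet}(Z)$-grading that the crossed product already carries. Write $\mathcal{V}=\bigoplus_{p\geq 0}S^{p}(Z)\otimes V$ and call $p$ the \emph{weight} of a coefficient. I assign to each of the four complexes appearing in the proof of the Corollary a \emph{net weight}: on $C^{\bullet}(L,H,\mathcal{V})$ and $C_{CE}^{\bullet}(L/H,\mathcal{V})$ it is just the weight of the value in $\mathcal{V}$ (the arguments lie in $L$, $H$, $L/H$, all of weight $0$), whereas on the intermediate complexes $C^{\bullet}(\mathcal{L},\mathcal{H},\mathcal{V})$ and $C_{CE}^{\bullet}(\mathcal{L}/\mathcal{H},\mathcal{V})$ it is the weight of the value minus the total weight of the arguments, which now lie in the graded objects $\mathcal{L}=S^{\bullet}(Z)\otimes L$ and $\mathcal{H}=S^{\bullet}(Z)\otimes H$. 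In each case a cochain decomposes uniquely into its net-weight-$0$ part and a complementary part of positive net weight, and I want to show that the differential and all the comparison maps respect this splitting; the net-weight-$0$ subcomplex of $C^{\bullet}(L,H,\mathcal{V})$ is precisely $C^{\bullet}(L,H,V)$ (with the restricted action $\nabla_{e}(1\cdot v)=\tau(e)v$), and likewise the net-weight-$0$ subcomplex of $C_{CE}^{\bullet}(L/H,\mathcal{V})$ is $C_{CE}^{\bullet}(L/H,V)$.

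First I would record that every structure map of the crossed product is homogeneous for this grading. From Equation~\ref{eq:bracket crossed product} the Dorfman bracket $\circ$ is weight-additive, the pairing $\langle\cdot,\cdot\rangle$ raises weight by $1$ (since $\langle e_1,e_2\rangle=(e_1,e_2)\in Z$), the derivation $\partial$ lowers it by $1$, and a short computation shows $\nabla_{x}$ shifts the coefficient weight by $\mathrm{wt}(x)$ (in particular $\nabla_{e}$ preserves the coefficient weight for $e\in L$, because $\rho(e)Z\subseteq Z$). Consequently each of $d_{0},\delta,d^{\prime}$ and $d_{CE}$ is net-weight homogeneous of degree $0$ on all four complexes: in a term such as $\nabla_{e_{a}}\omega(\cdots\widehat{e_{a}}\cdots)$ the weight shift produced by $\nabla_{e_{a}}$ exactly compensates the removal of the argument $e_{a}$, and in $\omega(\cdots e_{a}\circ e_{b}\cdots)$ the weight-additivity of $\circ$ balances the removal of $e_{a},e_{b}$. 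Hence each complex splits as a direct sum of its net-weight-$0$ subcomplex and the subcomplex of cochains with vanishing net-weight-$0$ component, and this splitting is preserved by the coboundary.

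Next I would verify that each map realizing the isomorphism of Corollary~\ref{cor:isomorphism Leibniz } is net-weight homogeneous of degree $0$: the extension/restriction pair $\varphi,\psi$ of Theorem~\ref{thm:CDA Leibniz standard} (whose weak $S^{\bullet}(Z)$-linearity formula is assembled solely from $\langle\cdot,\cdot\rangle$ and $\partial$), the inclusion $C_{nv}^{\bullet}\hookrightarrow C^{\bullet}$ together with the contracting construction $\lambda$ of Lemma~\ref{lem:lambda} used in Theorem~\ref{thm:isomorphism CDA}, and the canonical identification of the Lie--Rinehart complex $C_{CE}^{\bullet}(\mathcal{L}/\mathcal{H},\mathcal{V})$ with $C_{CE}^{\bullet}(L/H,\mathcal{V})$ (an $R$-linear map on the free module $\mathcal{L}/\mathcal{H}\cong S^{\bullet}(Z)\otimes(L/H)$ is determined by its restriction to $L/H$). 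All of these are built only from the homogeneous structure maps above and weight-independent rational scalars such as $\frac{1}{k+l-1}$, so homogeneity is inherited verbatim. It follows that the Corollary's isomorphism $H^{\bullet}(L,H,\mathcal{V})\cong H_{CE}^{\bullet}(L/H,\mathcal{V})$ respects the net-weight decomposition; restricting to net weight $0$ on both sides gives exactly $H^{\bullet}(L,H,V)\cong H_{CE}^{\bullet}(L/H,V)$.

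The main obstacle is the homogeneity claim in the third step for the inductively defined cochains $\lambda_{m-1},\dots,\lambda_{0}$ of Lemma~\ref{lem:lambda}, whose construction is intricate and proceeds through several nested inductions (regular permutations, then irregular elements). I do not expect this to be a genuine difficulty, however: the recursion only ever applies $\omega$, $d_{0}\lambda$, $d^{\prime}\lambda$, $\partial$ and $\langle\cdot,\cdot\rangle$, each of fixed net weight, so net-weight homogeneity propagates automatically at every stage and no new estimate is required beyond tracking weights through the existing formulas. One should also note that a naive attempt to apply Theorem~\ref{thm:isomorphism CDA} directly with coefficients $V$ over $\mathcal{L}$ fails, since $V=S^{0}(Z)\otimes V$ is not an $\mathcal{L}$-submodule of $\mathcal{V}$ (it is only an $L$-submodule); the grading argument is precisely what circumvents this, and it effectively reproduces, weight by weight, the direct proof alluded to in the introduction.
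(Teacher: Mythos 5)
Your proposal is correct and is essentially the paper's own argument: the paper introduces precisely your net-weight-zero part as the subcomplex $C_{0}^{\bullet}(\mathcal{L},\mathcal{H},\mathcal{V})$ of cochains taking values in $V$ on arguments from $L$ and $H$, checks that the differential, the pair $\varphi,\psi$ of Theorem \ref{thm:CDA Leibniz standard}, and the construction of Lemma \ref{lem:lambda} all preserve it, and then identifies $C^{\bullet}(L,H,V)\cong C_{0}^{\bullet}(\mathcal{L},\mathcal{H},\mathcal{V})$ and $H^{\bullet}(C_{0}^{\bullet}(\mathcal{L},\mathcal{H},\mathcal{V}),d)\cong H_{CE}^{\bullet}(L/H,V)$ by rerunning the proof of Theorem \ref{thm:isomorphism CDA} inside this subcomplex. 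The only (cosmetic) difference is that you package this subcomplex as a direct summand via the grading and restrict the isomorphism of Corollary \ref{cor:isomorphism Leibniz } to it, which incidentally renders your worried-about homogeneity check for the $\lambda$-construction dispensable: once the complexes split by net weight and the comparison cochain maps are homogeneous, a quasi-isomorphism that is block-diagonal for the splitting is automatically a quasi-isomorphism on each summand.
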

\begin{proof}
Consider a subspace of $C^{\bullet}(\mathcal{L},\mathcal{H},\mathcal{V})$:
\[
C_{0}^{\bullet}(\mathcal{L},\mathcal{H},\mathcal{V})\triangleq\bigoplus_{n}\{\omega\in C^{n}(\mathcal{L},\mathcal{H},\mathcal{V})|\omega_{k}(e_{1},\cdots e_{n-2k};h_{1},\cdots h_{k})\in V,\ \forall k,\ \forall e\in L,h\in H\}.
\]

It is easily checked from the definition of $d=d_{0}+\delta+d^{\prime}$
that $C_{0}^{\bullet}(\mathcal{L},\mathcal{H},\mathcal{V})$ is a
subcomplex.

Given any $\omega\in C_{0}^{\bullet}(\mathcal{L},\mathcal{H},\mathcal{V})$
satisfying the condition of Lemma \ref{lem:lambda}, we see that $\lambda$
and $\eta$ as constructed in the proof of this lemma are both in
$C_{0}^{\bullet}(\mathcal{L},\mathcal{H},\mathcal{V})$. Then by similar
arguments to the proof of Theorem \ref{thm:isomorphism CDA},
\[
H_{0}^{\bullet}(\mathcal{L},\mathcal{H},\mathcal{V})\triangleq H^{\bullet}(C_{0}^{\bullet}(\mathcal{L},\mathcal{H},\mathcal{V}),d)
\]
is isomorphic to the cohomology of the following subcomplex of $C^{\bullet}(\mathcal{L},\mathcal{H},\mathcal{V})$
\begin{eqnarray*}
C_{nv\ 0}^{\bullet}(\mathcal{L},\mathcal{H},\mathcal{V}) & \triangleq & \bigoplus_{n}\{\omega\in C^{n}(\mathcal{L},\mathcal{H},\mathcal{V})|\omega_{k}=0,\ \forall k\geq1,\ \iota_{\alpha}\omega_{0}=0,\ \forall\alpha\in\mathcal{H},\\
 &  & \quad\omega_{0}(e_{1},\cdots e_{n})\in V,\ \forall e\in L\},
\end{eqnarray*}
which is again isomorphic to $H_{CE}^{\bullet}(L/H,V)$.

On the other hand, in the proof of Theorem \ref{thm:CDA Leibniz standard},
if we restrict $\psi$ from $C^{\bullet}(\mathcal{L},\mathcal{H},\mathcal{V})$
to $C_{0}^{\bullet}(\mathcal{L},\mathcal{H},\mathcal{V})$ and $\varphi$
from $C^{\bullet}(L,H,\mathcal{V})$ to $C^{\bullet}(L,H,V)$, we
can get mutually invertible cochain maps between $C^{\bullet}(L,H,V)$
and $C_{0}^{\bullet}(\mathcal{L},\mathcal{H},\mathcal{V})$.

Thus 
\[
H^{\bullet}(L,H,\mathcal{V})\cong H_{0}^{\bullet}(\mathcal{L},\mathcal{H},\mathcal{V})\cong H_{CE}^{\bullet}(L/H,\mathcal{V}).
\]
\end{proof}
\begin{example}
If $L$ is the omni-Lie algebra $gl(V)\oplus V$, $V$ is the only
isotropic ideal of $L$ containing the left center $V$, and $(V,\tau)$
is a $V$-representation with $\tau$ being the standard action of
$gl(V)$ on $V$. As introduced by Weinstein \cite{Weinstein}, the
omni Lie algebra $gl(V)\oplus V$ can be viewed as the linearization
of the standard Courant algebroid $TV^{*}\oplus T^{*}V^{*}$, where
$gl(V)$ is identified with the space of linear vector fields and
$V$ is identified with the space of constant $1$-forms. If we ignore
the difference between $S^{\bullet}(V)$ and $C^{\infty}(V^{*})$,
the crossed product $\mathcal{L}$ as constructed in Proposition \ref{prop: from Leibniz to CDA}
can be viewed as a Courant-Dorfman subalgebra of $\Gamma(TV^{*}\oplus T^{*}V^{*})$,
in the sense that $\mathcal{L}$ consists of all polynomial vector
fields (excluding constant ones) and polynomial $1$-forms. By Theorem
\ref{thm:CDA Leibniz standard} and Corollary \ref{cor:isomorphism Leibniz },
the standard cohomology of $\mathcal{L}$ is isomorphic to the cohomology
of Lie algebra $gl(V)$ with coefficients in $S^{\bullet}(V)$, which
is trivial. Whence, although $\mathcal{L}$ is different from $\Gamma(TV^{*}\oplus T^{*}V^{*})$,
the standard cohomology of them are both trivial.\end{example}


\begin{thebibliography}{10}
\bibitem{Barnes} Barnes, D.W., $\emph{{\mbox{Some theorems on Leibniz algebras}}}$,
Communications in Algebra 39.7 (2011), 2463-2472.

\bibitem{Bloh65}Bloh, A., $\emph{{\mbox{On a generalization of the concept of Lie algebra}}}$,
Dokl. Akad. Nauk SSSR. Vol. 165. No. 3 (1965), 471-473.

\bibitem{ChenSX} Chen, Z., Stiénon, M. and Xu, P., $\emph{{\mbox{On regular Courant algebroids}}}$,
Journal of Symplectic Geometry 11.1 (2013), 1-24.

\bibitem{Covez} Covez, S., $\emph{{\mbox{The local integration of Leibniz algebras}}}$,
Annales de l'institut Fourier. Association des Annales de l'institut
Fourier, 63(1)(2013), 1-35.

\bibitem{GinotGrutz08} Ginot, G. and Grützmann M., $\emph{{\mbox{Cohomology of Courant algebroids with split base}}}$,
Journal of Symplectic Geometry 7.3 (2009), 311-335.

\bibitem{LiuWX} Liu, Z.J., Weinstein A. and Xu P., $\emph{{\mbox{Manin triples for Lie bialgebroids}}}$,
J. Differential Geom 45.3 (1997), 547-574.

\bibitem{Loday93} Loday, J.L., $\emph{{\mbox{Une version non commutative des algèbres de Lie}}}$,
L'Ens. Math. (2), 39 (1993), 269-293.

\bibitem{LodayPirash93} Loday, J.L. and Pirashvili T., $\emph{{\mbox{Universal enveloping algebras of Leibniz algebras and (co) homology}}}$,
Mathematische Annalen 296.1 (1993), 139-158.

\bibitem{Patsourakos} Patsourakos, A., $\emph{{\mbox{On Nilpotent Properties of Leibniz Algebras}}}$,
Communications in Algebra. 35 (12)(2007), 3828\textendash{}3834.

\bibitem{RoytPhD} Roytenberg, D., $\emph{{\mbox{Courant algebroids, derived brackets and even symplectic supermanifolds}}}$,
Ph.D. thesis, University of California, Berkeley, 1999.

\bibitem{RoytCDA} Roytenberg, D., $\emph{{\mbox{Courant\textendash Dorfman algebras and their cohomology}}}$,
Letters in Mathematical Physics 90.1-3 (2009), 311-351.

\bibitem{RoytNQmfd} Roytenberg, D., $\emph{{\mbox{On the structure of graded symplectic supermanifolds and Courant algebroids}}}$,
Contemporary Mathematics 315 (2002), 169-186.

\bibitem{ShengLiu13} Sheng, Y.H. and Liu Z.J., $\emph{{\mbox{Leibniz 2-algebras and twisted Courant algebroids}}}$,
Communications in Algebra 41.5 (2013), 1929-1953.

\bibitem{ShengLiu16} Sheng, Y.H. and Liu Z.J., $\emph{{\mbox{From Leibniz Algebras to Lie 2-algebras}}}$,
Algebras and Representation Theory 19.1 (2016), 1-5.

\bibitem{StienonXu08} Stiénon, M. and Xu P., $\emph{{\mbox{Modular classes of Loday algebroids}}}$,
Comptes Rendus Mathematique 346.3 (2008), 193-198.

\bibitem{Weinstein} Weinstein, A., $\emph{{\mbox{Omni-Lie Algebras}}}$,
arXiv preprint math.RT/9912190.\end{thebibliography}
\end{document}